\documentclass{amsart}

\usepackage[a4paper, outer=3.4cm, inner=3.4cm, top=2cm, bottom=2cm, marginparsep=.2cm, centering]{geometry}
\usepackage[T1]{fontenc}
\usepackage{comment}
\usepackage{microtype}
\usepackage[textsize=scriptsize]{todonotes}
\usepackage{silence}
\newcommand{\Spa}{\operatorname{Spa}}
\newcommand{\Spf}{\operatorname{Spf}}

\newcommand{\Spec}{\operatorname{Spec}}

\newcommand{\intcl}[1]{\overline{#1}^{\raisebox{-0.3ex}{\hspace{0.2mm}}}}

\newcommand{\overhat}[1]{\mkern 1.3mu\widehat{\mkern-1.3mu#1\mkern-1.3mu}\mkern 1.3mu}

\newcommand{\sptag}[1]{\href{https://stacks.math.columbia.edu/tag/#1}{\texttt{#1}}}
\NewDocumentCommand{\spcite}{>{\SplitArgument{2}{,}}m}{\spciteaux#1}
\newcommand{\spciteaux}[3]{\IfNoValueTF{#2}{\cite[Tag~\sptag{#1}]{StacksProject}}
	{\IfNoValueTF{#3}{\cite[Tags~\sptag{#1}, \sptag{#2}]{StacksProject}}
	{\cite[Tags~\sptag{#1}, \sptag{#2}, \sptag{#3}]{StacksProject}}}}
\newcommand{\restrsmall}[2]{{%
  #1
  \vphantom{|}
  |_{#2}
}}
\newcommand*{\isomarrow}{\xrightarrow{\raisebox{-0.35em}{\smash{\ensuremath{\sim}}}}}
\newcommand*{\isomlongarrow}{\xlongrightarrow{\raisebox{-0.35em}{\smash{\ensuremath{\sim}}}}}

\usepackage{tikz-cd}

\usepackage{quiver}
\usetikzlibrary{nfold}
\usetikzlibrary{babel}         % Important: For TikZ and Babel compatibility
\usepackage[shortlabels]{enumitem}
\usepackage{graphicx}
\usepackage{amsmath, amssymb, amsthm, thmtools, mathtools, amsfonts, stmaryrd, mathtools}
\usepackage{mathrsfs}     
\newcommand{\genf}{\vphantom{X}_\eta^{\mathrm{ad}}}

\renewcommand{\sp}{\ensuremath{\mathrm{sp}}}

\newcommand{\SKKc}{{\operatorname{Spa}(K, K^\circ)}}
\newcommand{\SKKp}{{\operatorname{Spa}(K, K^+)}}
\newcommand{\SKc}{{\operatorname{Spf}(K^\circ)}}
\newcommand{\SKp}{{\operatorname{Spf}(K^+)}}
\newcommand{\Frac}{\operatorname{Frac}}
\newcommand{\Mor}{\operatorname{Mor}}
\newcommand{\Hom}{\operatorname{Hom}}

\newcommand{\cA}{\mathcal{A}}
\newcommand{\cB}{\mathcal{B}}

\newcommand{\cO}{\mathcal{O}}
\newcommand{\cE}{\mathcal{E}}
\newcommand{\cF}{\mathcal{F}}
\newcommand{\cN}{\mathcal{N}}
\newcommand{\cT}{\mathcal{T}}
\newcommand{\cG}{\mathcal{G}}
\newcommand{\cH}{\mathcal{H}}

\newcommand{\cS}{\mathcal{S}}
\newcommand{\cU}{\mathcal{U}}

\newcommand{\cX}{\mathcal{X}}
\newcommand{\cY}{\mathcal{Y}}

\newcommand{\dA}{\mathbb{A}}
\newcommand{\dB}{\mathbb{B}}
\newcommand{\dC}{\mathbb{C}}

\newcommand{\dG}{\mathbb{G}}

\newcommand{\dQ}{\mathbb{Q}}

\newcommand{\dZ}{\mathbb{Z}}

\usepackage{hyperref}
\hypersetup{
    pdftitle={Abeloid varieties over higher rank valued fields: Raynaud uniformization and N\'eron models},
    pdfauthor={Simon Kaib, Annette Werner},
    colorlinks, citecolor=blue, filecolor=black, linkcolor=black, urlcolor=black, hypertexnames=false}
\usepackage[capitalize]{cleveref}

\newtheorem{theorem}{Theorem}[section]
\newtheorem{lemma}[theorem]{Lemma}
\newtheorem{proposition}[theorem]{Proposition}
\newtheorem{corollary}[theorem]{Corollary}

\newtheorem{convention}[theorem]{Convention}

\newtheorem{introtheorem}{Theorem}

\newtheorem{introcorollary}[introtheorem]{Corollary}
\crefname{introtheorem}{Theorem}{Theorems}
\Crefname{introtheorem}{Theorem}{Theorems}
\crefname{introcorollary}{Corollary}{Corollaries}
\Crefname{introcorollary}{Corollary}{Corollaries}

\newtheorem{definition}[theorem]{Definition}
\newtheorem{example}[theorem]{Example}

\newtheorem{remark}[theorem]{Remark}

\title[Abeloid varieties over higher rank valued fields]{Abeloid varieties over higher rank valued fields: Raynaud uniformization and N\'eron models}

\author{Simon Kaib, Annette Werner}
\date{September 2026}

\begin{document}

\begin{abstract}
We study abeloid varieties over $\SKKp$, where $K$ is a complete non-Archimedean field and $K^+\subseteq K^\circ$ is a valuation ring of arbitrary rank. We extend the well-known Raynaud uniformization to this higher-rank setting: after a finite separable base change, an abeloid variety $A$ over $\SKKp$ admits a uniformization as $A\cong E/M$, where $E$ is an extension of an abeloid variety $B$ with generically good reduction by a torus, and $M$ is a full lattice. We also  show the existence of suitable N\'eron models in algebraic and formal situations over higher rank valuation rings: We prove the existence of algebraic (lft-)Néron models for semi-abelian varieties over valuation rings with algebraically closed fraction fields, as well as, up to finite separable base change, over complete rank one valuation rings with algebraically closed residue field. Building on this result, we show the existence of marked Néron models for abeloid varieties over $\SKKp$ after finite separable base change.
We identity
the failure of generically good reduction to extend to good reduction of $B$ over $K^+$ as the obstruction to recovering the higher rank Raynaud extension directly from the formal {N\'eron} model.
Finally, we prove that multiplication by $m$ is finite and faithfully flat on Néron models {in the algebraically closed setting}, and we illustrate our results for elliptic curves.
\end{abstract}

\maketitle

\centerline{MSC(2020) 14K15, 14G22}

\section{Introduction}

The present paper investigates abeloid varieties $A$ over $\SKKp$, where $K$ is  a complete non-Archimedean field with non-trivial valuation and $K^+$ is a valuation ring of $K$ of arbitrary rank.
We prove that in this setting the abeloid variety $A$ admits a uniformization by a semi-abelian 
{space} which generalizes the well-known Raynaud uniformization in the rank one case.
We also establish existence results for algebraic and formal Néron models over higher rank valuation rings.
The algebraic existence results extend the known theory even in the classical case of {non-discrete} rank one valuation rings.

Our results generalize the following picture over rank one valuation rings. Let $K^\circ=\cO_K$ be the rank one valuation ring defining $K$.  After passing to a finite separable extension of $K$, an abeloid variety
$A$ over $\Spa(K,K^\circ)$ admits a (Raynaud) uniformization as 
  $A \cong E/M$, 
where $E$ is an extension 
of an abeloid variety $B$ with good reduction by a split torus 
$T\cong (\dG_m^\mathrm{an})^r$, 
and
$M \cong \underline{\dZ^r}$ is a full lattice in $E$. In the algebraic case this is shown in \cite{BoschLuetkebohmert1991Degenerating}, in the general case of abeloid varieties in \cite{Luetkebohmert2016RigidGeometryCurves}, see \cref{t:classical_uniformization_analytic}.
This analytic uniformization is closely related to the reduction theory of $A$,  see \cref{t:classical_uniformization_formal1}.

The purpose of this paper is to study this picture when the rank one
valuation ring $K^\circ$ is replaced by a possibly higher rank
valuation ring $K^+ \subseteq K^\circ$ of $K$.
{In the language of Huber's adic spaces, which we use in the present paper,  this amounts to replacing the rank one base $\operatorname{Spa}(K,K^\circ)$ by $\operatorname{Spa}(K,K^+)$. Such bases arise naturally as completed residue fields of analytic adic spaces, and hence in the study of fibers of morphisms of adic spaces.

For every adic space $X$ over $\SKKp$, we write
\begin{equation*}
    X^\circ := X\times_{\operatorname{Spa}(K,K^+)} \operatorname{Spa}(K,K^\circ)\vspace{-0.7mm}
\end{equation*}
for its rank one fiber.} 
If $X$ is partially proper we show in \cref{p:valuative_strong_criterion} that passing to the rank one fiber loses little information. In fact we have
\begin{equation*}
    ((-)^\circ)_*X^\circ=X.
\end{equation*}
By compactifying formal models, we prove that $(-)^\circ$ induces an equivalence of categories between proper adic spaces of topologically finite type over $\SKKp$ and $\SKKc$, respectively; see \cref{p:equivalence_proper}.
This induces an equivalence between abeloid varieties over the two bases, and allows us to lift the classical uniformization cross of $A^\circ$ to the higher rank base.

We say that an abeloid variety over $\SKKp$ has \textit{generically good reduction} if its $\SKKc$-fiber has good reduction. One of our main results, \cref{t:uniformization_analytic},
shows that Raynaud uniformizations exist over $\SKKp$:

\begin{introtheorem}\label{t:intro1}
    Let $A$ be an abeloid variety over $\SKKp$.
    After a finite separable affinoid field extension,
    there exists an isomorphism (as sheaves on the analytic site)
    \begin{equation*}
        A\cong E/M,
    \end{equation*}
    where
    \[ 0 \longrightarrow T \longrightarrow E \longrightarrow B \longrightarrow 0\]
    is an extension of an abeloid variety $B$ with \textit{generically good reduction}
    by a split algebraic torus $T\cong \dG_{m,\SKKp}^r$, and $\underline{\dZ^r}\cong M\subseteq{} E$ is a full lattice. 
   
    The resulting uniformization cross is uniquely determined by $A$ and functorial in $A$. Its $\SKKc$-fiber is the classical uniformization cross of $A^\circ$, and $E\to A$ is the universal analytic covering space of $A$.
\end{introtheorem}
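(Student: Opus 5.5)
The strategy is to reduce to the classical rank one uniformization on the fiber $A^\circ$ and then transport it back to $\SKKp$. The tools are the equivalence of \cref{p:equivalence_proper} and the pushforward description $((-)^\circ)_*X^\circ=X$ for partially proper $X$ from \cref{p:valuative_strong_criterion}.

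\textbf{Step 1: the rank one cross.} By \cref{p:equivalence_proper}, $A$ is determined by the abeloid variety $A^\circ$ over $\SKKc$, and the group structure transfers as well, since the equivalence respects fibre products. Apply \cref{t:classical_uniformization_analytic} to $A^\circ$. After a finite separable extension $K'/K$, this gives $A^\circ\cong E^\circ_0/M_0$ with
\[0\to T_0\to E^\circ_0\to B^\circ_0\to 0,\]
where $T_0\cong(\dG^{\mathrm{an}}_m)^r$, $B^\circ_0$ has good reduction, and $M_0$ is a full lattice. A finite separable extension of $K$ extends to $K'$ with valuation ring the integral closure of $K^+$, so the base change is harmless and we replace $K$ by $K'$.

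\textbf{Step 2: lifting each piece.}
\begin{enumerate}[(a)]
\item $B^\circ_0$ is proper, so \cref{p:equivalence_proper} gives a unique abeloid $B$ over $\SKKp$ with $B^\circ=B^\circ_0$. By definition $B$ has generically good reduction.
\item The torus lifts as $T=\dG^r_{m,\SKKp}$.
\item For $E$, I would set $E:=((-)^\circ)_*E^\circ_0$, viewing $E_0^\circ$ as a partially proper space. The alternative is to describe $E$ via the classifying data: extensions of $B$ by $T$ correspond to points of the dual $(B^\vee)^r$. Since $B^\vee$ is proper, its $\SKKp$-points equal its $\SKKc$-points, so the extension class of $E^\circ_0$ lifts uniquely to an extension $E$ of $B$ by $T$. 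This second description shows that $E$ is an honest adic space and that $E^\circ\cong E^\circ_0$.
\item The lattice: $\underline{\dZ^r}$ is discrete and $E$ is partially proper, so $M_0\subseteq E^\circ(\SKKc)=E(\SKKp)$ gives $M\subseteq E$.
\end{enumerate}

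\textbf{Step 3: the isomorphism $E/M\cong A$.} Since $A$ is partially proper, the map $E^\circ\to A^\circ$ extends by the pushforward formula to a homomorphism $E\to A$ killing $M$. I would then show this map is a local isomorphism and that its fibres are $M$-torsors, analytically locally on $A$. The plan is to use a fundamental domain: a quasi-compact open $U\subseteq E$ obtained as the closure (lift) of a rank one fundamental domain, whose $M$-translates cover $E$. One then checks $U\to A$ is an open immersion on suitable pieces, using that $(-)^\circ$ detects isomorphisms of partially proper spaces. \emph{This step is the main obstacle.} The points of $E/M$ of higher rank lie in closures of rank one points, and one must verify that the quotient sheaf is still locally the image of quasi-compact opens, i.e.\ that the $M$-action remains properly discontinuous on the higher rank points. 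I would attempt this by working with the formal models $\overline{E}$ of the classical formal uniformization \cref{t:classical_uniformization_formal1} and their specialization maps.

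\textbf{Step 4: uniqueness, functoriality, and covering.} Uniqueness and functoriality follow from the rank one statements, since homomorphisms between partially proper spaces are determined on $(-)^\circ$-fibres. That $E\to A$ is the universal analytic covering follows likewise: analytic coverings of $A$ correspond to those of $A^\circ$ via pushforward, and $E^\circ$ is simply connected in the rank one sense by the classical theory.
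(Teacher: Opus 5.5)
Your skeleton matches the paper's. You pass to a finite separable extension where the rank one cross of $A^\circ$ exists, lift $B$ via \cref{p:equivalence_proper}, and lift $T$ and $M$ directly. You lift $E$ by comparing extension groups; your route through $B^\vee(\SKKp)=B^\vee(\SKKc)$ is fine once you know $B^\vee$ exists over $\SKKp$ with $(B^\vee)^\circ=(B^\circ)^\vee$ (cf.\ \cref{c:dual_exists}), while the paper uses \cref{l:Ext_abeloid_circ_iso}, which rests on the same Picard comparison. Uniqueness and functoriality then come from full faithfulness of $(-)^\circ$ on partially proper spaces. The genuine gap is Step 3: the central assertion $A\cong E/M$ is left as a tentative plan, so the proposal does not prove the theorem. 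The fundamental-domain and formal-model strategy you sketch is also unnecessary. It is what creates your worry about proper discontinuity at higher rank points, and that issue never has to be confronted.

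Since $A$ already exists, you only need exactness of $0\to M\to E\to A\to 0$ as analytic sheaves. Both halves can be read off from the rank one fibre, because $E\to A$ is partially proper ($E$ is partially proper over $\SKKp$ and $A$ is separated).

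For the kernel: $\ker(E\to A)$ is a closed, hence partially proper, subgroup whose $\circ$-fibre is $M^\circ$. So $M\to\ker(E\to A)$ is an isomorphism by the full faithfulness in \cref{p:valuative_strong_criterion}.

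For the epimorphism:
\begin{enumerate}
\item The map $E^\circ\to A^\circ$ has sections $s_i$ over some open cover $\{U_i\}$ of $A^\circ$.
\item By \cref{l:dense_cocont}, the opens $U_i^{\mathrm{sat}}=A\setminus\overline{A^\circ\setminus U_i}$ cover $A$ and satisfy $U_i^{\mathrm{sat}}\cap A^\circ=U_i$. This is where quasi-compactness of $A^\circ\to A$ and the fact that every point of $A$ specializes from $A^\circ$ enter.
\item By \cref{p:valuative_strong_criterion} applied to the partially proper map $E\to A$, each $s_i$ extends uniquely to a section $U_i^{\mathrm{sat}}\to E$ over $A$.
\end{enumerate}
This is the epimorphism part of \cref{l:circ_reflect_pp}, and it is exactly how the paper closes this step.

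A smaller inaccuracy in Step 1: the integral closure of $K^+$ in $K'$ need not be a valuation ring, since $K^+/K^{\circ\circ}$ may have several extensions to the residue field of $K'$. You must choose one valuation ring of $K'$ dominating $K^+$; it is automatically open, as it contains $K'^{\circ\circ}$.
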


There
is an essential new geometric feature in higher rank: generically good
reduction need not imply good reduction over $K^+$. Thus, although
the good reduction part of the classical Raynaud extension of
$A^\circ$ always extends as an abeloid variety over
$\operatorname{Spa}(K,K^+)$, it need not admit a
smooth formal group model 
over $\operatorname{Spf}(K^+)$. This phenomenon already occurs for
elliptic curves.  If $E$ is an elliptic curve over
$\operatorname{Spa}(K,K^+)$ with
\[
    j(E)\in K^\circ\setminus K^+,
\]
then, up to a finite separable field extension, $E$ has generically good reduction, but it does not have good
reduction over $K^+$; see
\cref{e:classification_RU_curves}.  

We also investigate the natural question whether we have nice formal models over $K^+$ of abeloid varieties over $\SKKp$. 
In the case $K^+ = K^\circ$ of rank one,  L\"utkebohmert {(see \cite[Section 7.6]{Luetkebohmert2016RigidGeometryCurves})} constructs a so-called marked N\'eron model for an abeloid variety  $A$ over $\SKKc$ after a finite separable base change of the ground field. Note that this is in general not a model of $A$, but only of an open subgroup. For the definition of marked N\'eron models see \cref{d:markedNeron_K+}.

In order to construct marked N\'eron models over higher rank $K^+$, we first
prove the following new existence result for algebraic (lft-)N\'eron models
in \cref{t:Neron_existence_V}
that may be of
independent interest:

\begin{introtheorem}\label{t:intro2}
Let $V$ be a Henselian valuation ring with fraction field $F$, and suppose that the residue fields of $\Spec(V)$ at non-generic points are algebraically closed. Let $E_F\to \Spec(F)$ be a semi-abelian variety. After replacing $F$ by a finite separable field extension and $V$ by the unique valuation ring dominating it, $E_F$ extends to a semi-abelian scheme over $\Spec(V)$.
Once it does, $E_F$ admits a (lft-)Néron model over $\Spec(V)$ whose fiberwise identity component is a semi-abelian scheme.
\end{introtheorem}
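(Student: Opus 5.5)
The proof has two parts. First we extend $E_F$ to a semi-abelian scheme after a finite separable base change. Then we construct the Néron model by gluing, working along the chain of prime ideals of $V$.

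\emph{Extension.} Write $0\to T_F\to E_F\to B_F\to 0$ with $T_F$ a torus and $B_F$ an abelian variety. After a finite separable extension, $T_F$ splits, and then it extends as $\dG_m^r$. For $B_F$, the plan is to reduce to the finite-rank case by a limit argument: $V$ is the filtered union of valuation rings $V_i$ of finitely generated subfields, and $B_F$ descends to some $V_i$. One could try induction on the rank via the composite-valuation decomposition. I would rather use the stable reduction / semistable reduction theorem for abelian varieties over arbitrary valuation rings, in the form given by Raynaud-type uniformization fiberwise. The cleaner route is the following. Choose a level-$n$ structure with $n\ge 3$ prime to the residue characteristics; it is realized after a finite separable extension, since the Galois action on $B_F[n]$ is étale. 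The point $\Spec F\to \mathcal{A}_{g,n}$ then extends to the toroidal compactification $\overline{\mathcal{A}}_{g,n}$, by the valuative criterion for the proper stack/scheme $\overline{\mathcal{A}}_{g,n}$ (valid for arbitrary valuation rings). The pullback of the universal semi-abelian scheme gives a semi-abelian extension of $B_F$. The existence of a semi-abelian scheme over the boundary (Faltings–Chai) makes this work after a further finite separable extension ensuring unipotent monodromy; algebraically closed residue fields plus Henselianity make the needed extensions separable and finite.

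For the extension class, extensions of $B$ by $\dG_m^r$ correspond to points of $(B^\vee)^r$. Once the semi-abelian model $\mathcal{B}$ exists, I would spread the class out on the identity component. If needed I would enlarge the torus part to absorb the degenerating directions; equivalently, I would apply the same compactification argument directly to $E_F$ through its Raynaud data. This part is the main obstacle: handling higher rank and non-noetherian $V$ carefully, so that properness and valuative criteria apply in the non-noetherian setting.

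\emph{Néron model.} Let $\mathcal{E}$ be the semi-abelian extension. The Néron model should be $\mathcal{N}=\bigcup_{x}$ of translates of $\mathcal{E}$, glued along the chain of primes. Concretely, for each prime $\mathfrak p$ of $V$, I would study $E_F(F)/\mathcal{E}(V)$. Using Henselianity and the algebraically closed residue fields, every $F$-point should lie in $\mathcal{E}(V)$ after translation by a point from a discrete "component group". This group is a subgroup of $\mathrm{Hom}(X^*(T_{\mathfrak p}),\Gamma)$, read off from the torus parts on each fiber via valuations. I would define $\mathcal{N}$ as the quotient of $\coprod_{\text{component classes}}\mathcal{E}$ by the gluing given by translations; this is a smooth separated group scheme locally of finite type. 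Its fiberwise identity component is $\mathcal{E}$, since each fiber component group is discrete. The Néron mapping property is then checked on smooth $V$-schemes $Y$. A morphism $Y_F\to E_F$ extends locally: every point of $Y$ specializes from the generic fiber, and by the Weil-extension argument for group schemes (Bosch–Lütkebohmert–Raynaud 4.4), it suffices to extend over codimension-one-type points. Those are points lying over the individual primes, where $V_{\mathfrak p}$ is a rank-one-localised situation and the classical lft-Néron theory for semi-abelian schemes applies. Separatedness of $\mathcal{N}$ gives uniqueness, and Henselianity plus algebraically closed residue fields give surjectivity of the reduction on points.
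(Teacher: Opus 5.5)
\textbf{Gap 1: the N\'eron mapping property.} Your verification of the N\'eron mapping property breaks down. You reduce via Weil's extension theorem to ``codimension-one-type points'' and then appeal to classical lft-N\'eron theory over $V_{\mathfrak p}$. This fails for three reasons:
\begin{itemize}
\item Weil's theorem (BLR, Thm.~4.4.1) needs a Noetherian normal base, and $\Spec(V)$ is not Noetherian.
\item It also needs a rational map already defined on an $S$-dense open.
\item $V_{\mathfrak p}$ is in general neither of rank one nor discrete, while the classical lft-N\'eron theory for semi-abelian varieties (BLR, Prop.~10.1.7) exists only over discrete valuation rings.
\end{itemize}
For $V=\cO_{\dC_p}$, your codimension-one step is the whole theorem.

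What is missing is the reduction to \emph{pointed} maps into the quasi-compact semi-abelian model $\cE$. For $x_0\in X$ over a non-generic $s_0$:
\begin{itemize}
\item Lift a closed point of $X_{s_0}$ specializing $x_0$ to a section over $\Spec(\cO_{S,s_0})$. This is exactly where Henselianity and the algebraically closed residue fields are used.
\item Spread this section out over an open $U\ni s_0$.
\item Every $F$-point of $E_F$ extends to an $S$-section of the glued group, so you can translate the section to the unit section.
\item Apply the marked N\'eron property of semi-abelian schemes over normal integral bases (\cref{p:semi-abelianAreNeron}) to the fiberwise connected component of $X_U$ through the section.
\end{itemize}
Only inside that proposition does one pass to a Noetherian normal base, by descending $X$, the section and $\cE$ from $\cO_{S,s_0}$; the non-quasi-compact glued group is never descended. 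On that base, DVR N\'eron theory at codimension-one points applies, Raynaud's result shows the map is defined near the section (this supplies the $S$-density), and Weil's theorem finishes.

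You also assert separatedness of the glued scheme without argument. It holds because $g^e+\cE$ and $h^e+\cE$ are glued along the \emph{maximal} domain of definition of $h-g$, which is an open of the form $\Spec(\cO_{S,s})$. It amounts to showing that each extension $g^e\colon \mathrm{dom}(g)\to\cE$ is a closed immersion. Incidentally, the component groups are quotients such as $\Gamma/v(q)\dZ$ for a Tate curve, hence divisible, not subgroups of $\mathrm{Hom}(X^*(T_{\mathfrak p}),\Gamma)$.

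\textbf{Gap 2: the extension step.} This step is also incomplete.
\begin{itemize}
\item $\overline{\mathcal{A}}_{g,n}$ parametrizes principally polarized abelian varieties, so you need Zarhin's trick. You must then recover a model of $B_F$ from the semi-abelian model $H$ of $B_F^4\times(B_F^\vee)^4$: lift the idempotent projection to $H$ (again via the marked N\'eron property of semi-abelian schemes, now over the non-Noetherian $V$), take its kernel, and use Chevalley's theorem to see it is semi-abelian.
\item More seriously, you leave the extension class open, and ``enlarging the torus part'' is not allowed. You need an extension $0\to T\to\cE\to\mathcal{B}\to 0$ whose generic fiber is the given $E_F$.
\end{itemize}
The required input is the following. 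For $V$ with separably closed fraction field and $\mathcal{B}$ semi-abelian with abelian generic fiber, the map $\mathrm{Ext}^1(\mathcal{B},\dG_m)\to\mathrm{Ext}^1(B_F,\dG_m)$ is bijective.
\begin{itemize}
\item Both sides coincide with $H^1_m(-,\dG_m)$, because $\mathcal{B}^i$ and $B_F^i$ ($i\le 3$) have no non-trivial pointed maps to $\dG_m$.
\item After a limit reduction to a regular local base, $\mathrm{Pic}(\mathcal{B})\to\mathrm{Pic}(B_F)$ (and likewise for $\mathcal{B}\times_S\mathcal{B}$) is surjective by regularity and injective because $\mathrm{Pic}(S)=0$.
\end{itemize}
Finally, the extension step needs neither Henselianity, nor the algebraically closed residue fields, nor any monodromy argument. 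It works over an arbitrary valuation ring by passing to a separable closure and approximating by finite subextensions.
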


Note that the assumptions on $V$ are satisfied in the case that $F$ is algebraically closed, as well as in the case that $(F,V)=(K,K^+)$ has
algebraically closed residue field $k=K^\circ/{K^{\circ\circ}}$, see \cref{e:Neron_excistence}. This applies in particular to the rank one case $K^+=K^\circ$. For example it shows that every abelian variety over $\dC_p$ admits a (lft)-Néron model over $\cO_{\dC_p}$, which seems to be missing from the literature so far.

A key step is the observation that a
semi-abelian scheme over a normal integral base already satisfies an algebraic version of the
marked N\'eron mapping property; see
\cref{p:semi-abelianAreNeron}. 
These results give a partial answer to the general existence problem for N\'eron models of abelian varieties over arbitrary valuation rings.

The bridge between this algebraic result and formal geometry over
$K^+$ is a gluing statement for admissible formal schemes. We denote the corresponding category by $\operatorname{Adm}$. Namely, we prove in \cref{p:pushout_equivalence}
that pullback induces an equivalence
\[
\begin{split}
    \operatorname{Adm}/\operatorname{Spf}(K^+)
    \cong {}&
    \bigl(\operatorname{Adm}/\operatorname{Spf}(K^\circ)\bigr)
    \\
    &\times_{
      \operatorname{Schemes}^{\mathrm{flat,lfp}}/
      \operatorname{Spec}(k)}
    \bigl(
      \operatorname{Schemes}^{\mathrm{flat,lfp}}/
      \operatorname{Spec}(K^+/K^{\circ\circ})
    \bigr),
\end{split}
\]
compatible, among other things, with smooth and proper morphisms. {Here $k$ is the residue field of $K^\circ$.}
In other words, the datum of an object over $\operatorname{Spf}(K^+)$ is equivalent to that of a pair of objects over $\operatorname{Spf}(K^\circ)$ and
$\operatorname{Spec}(K^+/K^{\circ\circ})$ together with a gluing datum over $\Spec(k)$.
The reduction over
$\operatorname{Spec}(K^+/K^{\circ\circ})$ is a scheme over a valuation ring to which we can now apply our algebraic results.

We use this gluing result to construct a formal marked N\'eron model for an abeloid variety $A$ over $\SKKp$ in the sense of \cref{d:markedNeron_K+} after a finite separable base change.
We start with a
marked N\'eron model of $A^\circ$ over
$\operatorname{Spf}(K^\circ)$, its special fiber is semi-abelian.
Since $K^+/K^{\circ\circ}$ is a valuation ring with fraction field
$k$, \cref{l:semi-abelian-model} provides, up to a finite
separable base field extension, a marked N\'eron model of
this semi-abelian variety over
$\operatorname{Spec}(K^+/K^{\circ\circ})$. Gluing the two models
produces a formal marked N\'eron model over $\operatorname{Spf}(K^+)$.

In this way we obtain the following result in \cref{t:uniformization_formal1}:
\begin{introtheorem}\label{t:intro3}
    Let $A$ be an abeloid variety over $\SKKp$.
    After a finite separable affinoid field extension, $A$ 
    admits a marked Néron model $\mathcal{E}$ that fits into a short exact sequence
        % https://q.uiver.app/#q=WzAsNSxbMSwwLCJcXGNUIl0sWzIsMCwiXFxvdmVybGluZXtcXHJlc3Ryc21hbGx7RX17XFxjQlxcZ2VuZn19Il0sWzMsMCwiXFxjQiJdLFswLDAsIjAiXSxbNCwwLCIwIl0sWzMsMF0sWzAsMV0sWzEsMl0sWzIsNF1d
        \[\begin{tikzcd}
        	0 & \cT & {\cE} & \cB & 0
        	\arrow[from=1-1, to=1-2]
        	\arrow[from=1-2, to=1-3]
        	\arrow[from=1-3, to=1-4]
	        \arrow[from=1-4, to=1-5]
        \end{tikzcd}\]
        where $\cT$ is a split formal torus and $\cB$ is a formal semi-abelian scheme whose $\SKc$-fiber is a formal abelian scheme.
        
        If $k$ is algebraically closed or $K^+/K^{\circ\circ}$ is a discrete valuation ring, then, after such a base change, $A$ admits a strong marked Néron model, and $\cE$ is its fiberwise identity component.
\end{introtheorem}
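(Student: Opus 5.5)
We follow the strategy sketched before the statement: build the model separately over $\SKc$ and over $\Spec(K^+/K^{\circ\circ})$, then glue with \cref{p:pushout_equivalence}.

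\emph{Step 1: the rank one part.} By \cref{p:equivalence_proper}, $A^\circ$ is an abeloid variety over $\SKKc$. After a finite separable extension of $K$, Lütkebohmert's construction (\cref{t:classical_uniformization_formal1}) gives a formal marked Néron model $\cE^\circ$ of $A^\circ$ over $\SKc$. It is an extension
\begin{equation*}
0\to \cT^\circ\to \cE^\circ\to \cB^\circ\to 0,
\end{equation*}
where $\cT^\circ$ is a split formal torus and $\cB^\circ$ is the formal abelian scheme whose generic fiber is the good-reduction part $B^\circ$ of the classical Raynaud extension. The special fiber $\cE^\circ_k$ is therefore a semi-abelian variety over $k$, and it is an extension of the abelian variety $\cB^\circ_k$ by the split torus $\cT^\circ_k$.

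\emph{Step 2: the part over $\Spec(K^+/K^{\circ\circ})$.} The ring $V:=K^+/K^{\circ\circ}$ is a valuation ring with fraction field $k$, and it is Henselian because $K^+$ is complete. We apply \cref{l:semi-abelian-model} to $\cE^\circ_k$, which may require a finite separable extension of $k$. This extension lifts to a finite separable (unramified) extension of $K$ by Hensel's lemma, and the construction of Step 1 is stable under it. We obtain a semi-abelian scheme $\mathcal{G}$ over $V$ with generic fiber $\cE^\circ_k$ that enjoys the algebraic marked Néron property of \cref{p:semi-abelianAreNeron}.

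The torus part should extend automatically as the split torus $\mathbb{G}_{m,V}^r$. Taking the quotient should produce a semi-abelian $V$-scheme $\mathcal{G}/\mathbb{G}_m^r$ extending $\cB^\circ_k$. I expect this to need uniqueness of semi-abelian extensions over normal bases, proved by the same mapping-property argument as \cref{p:semi-abelianAreNeron}.

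\emph{Step 3: gluing.} The triple $(\cE^\circ,\mathcal{G},\mathrm{id})$ is a gluing datum, so \cref{p:pushout_equivalence} produces an admissible formal group scheme $\cE$ over $\SKp$. The exact sequence glues in the same way from the compatible pieces. This gives a split formal torus $\cT$ and a formal semi-abelian scheme $\cB$ whose $\SKc$-fiber is $\cB^\circ$, an abelian scheme. Exactness and smoothness can be checked on the two pieces, since the equivalence is compatible with smooth morphisms.

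\emph{Step 4: the marked Néron property, the main obstacle.} Let $\mathcal{Z}$ be a smooth admissible formal scheme over $\SKp$ with a morphism $\mathcal{Z}_\eta^{\mathrm{ad}}\to A$ compatible with the marking. We must extend it uniquely to $\mathcal{Z}\to\cE$. Using \cref{p:pushout_equivalence} again, it suffices to do this in two steps. First, extend $\mathcal{Z}^\circ\to\cE^\circ$ by the rank one property. Second, extend $\mathcal{Z}\otimes V\to\mathcal{G}$ by the algebraic property, since the generic fiber $\mathcal{Z}_k\to\cE^\circ_k$ is already given. The delicate point is matching the marking conditions of \cref{d:markedNeron_K+} with these two separate inputs. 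I expect this to need \cref{p:valuative_strong_criterion} to pass from $A$ to $A^\circ$.

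\emph{Step 5: the strong case.} If $k$ is algebraically closed or $V$ is a discrete valuation ring, an lft-Néron model of $\cE^\circ_k$ exists over $V$: use \cref{t:Neron_existence_V} in the first case and the classical theory in the second. Its fiberwise identity component is $\mathcal{G}$. We glue the component groups with the strong rank one Néron model in the same way as in Step 3. Then $\cE$ is the fiberwise identity component of the result, which is the strong marked Néron model.
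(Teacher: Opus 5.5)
Your overall architecture matches the paper: build $\cE^\circ$ in rank one, build a semi-abelian model over $V=K^+/K^{\circ\circ}$, glue via \cref{p:pushout_equivalence}, and verify the mapping property by the two-step extension argument (this is the diagram chase of \cref{p:semi-abelian_subgroup}).

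\textbf{The gap is in Step 2.} This is exactly where you depart from the paper. You apply \cref{l:semi-abelian-model} to the whole semi-abelian variety $\cE^\circ_k$. You then need the split torus $\dG_{m,k}^r\subseteq\cE^\circ_k$ to extend to a closed subtorus of $\mathcal{G}$ whose quotient is a semi-abelian scheme extending $\cB^\circ_k$. Nothing you invoke gives this:
\begin{itemize}
\item The mapping property of \cref{p:semi-abelianAreNeron} only yields a homomorphism $\dG_{m,V}^r\to\mathcal{G}$. It says nothing about whether this is a closed immersion.
\item Nor does it say that $\mathcal{G}/\dG_{m,V}^r$ exists as a semi-abelian scheme over a non-Noetherian valuation ring of arbitrary rank.
\item Uniqueness of semi-abelian models only helps once you already have some model carrying the extension structure, so the argument you sketch is circular.
\end{itemize}
As it stands, you do obtain a formal semi-abelian marked N\'eron model, since Step 4 never uses the extension structure. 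You do not obtain the exact sequence $0\to\cT\to\cE\to\cB\to 0$ with $\cB$ formal semi-abelian. That sequence is part of the statement, and your Step 3 gluing of it depends on it.

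\textbf{How the paper avoids this.} It applies \cref{l:semi-abelian-model} only to the abelian variety $\cB^\circ_k$ and glues to get $\cB$. It then lifts the class $[\cE^\circ]$ via \cref{l:Ext_formal_agrees}. There, pullback identifies $\mathrm{Ext}^1_{\mathrm{Zar}}(\cB,\cT)$ with the fiber product of the Ext groups over $\SKc$ and over $V$, taken over the Ext group over $k$. \Cref{l:semi-abelian_ext} (for separably closed fraction field, plus approximation) then shows that the class lifts after a further finite separable extension.

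Alternatively, you could repair your route. The proof of \cref{l:semi-abelian-model} actually constructs the model as an extension $0\to T\to E\to A'\to 0$ using \cref{l:semi-abelian_ext}. You would descend this structure to a finite level and then identify it with $\mathcal{G}$ by uniqueness.

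\textbf{Smaller points.}
\begin{itemize}
\item $V$ need not be Henselian: take $K=\mathbb{Q}((t))$ and $K^+=\mathbb{Z}_{(p)}+t\mathbb{Q}[[t]]$. You never actually use this claim, so it does no harm.
\item Before Step 4 you must construct the open immersion $\cE\genf\hookrightarrow A$. Do this by extending $(\cE^\circ)\genf\hookrightarrow A^\circ$ with \cref{p:valuative_strong_criterion} and \cref{l:circ_reflects_tft_smooth}(2).
\item In Step 5 the strong universal property still has to be checked. This needs \cref{l:smooth_int_closed}(2), so that a connected $\cX$ has connected $\cX^\circ$ and the rank one property applies.
\end{itemize}
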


When $k$ is algebraically closed we show in \cref{p:Neron_finite_m} that multiplication by $m$ is finite and faithfully flat on strong marked Néron models. We use this to deduce the analogous statement for Néron models of semi-abelian varieties over algebraically closed fields. The proof argues by passing to a microbial valuation ring of strictly higher rank, where our generalized uniformization theory applies. In contrast, this statement generally fails in the discretely valued setting.

We proceed to investigate the connection of the marked Néron model to the Raynaud uniformization over $\SKKp$ in \cref{c:uniformization_formal2},
which is slightly more subtle than in rank one.
Pushing out the generic fiber of the formal semi-abelian extension along the inclusion of its formal torus into the full algebraic torus
defines an open part of the Raynaud extension of $A$.  Its $\SKKc$-fiber is the classical Raynaud extension.
The full Raynaud extension arises directly in this way from the
marked N\'eron model if and only if
the generically good reduction part $B$ has good reduction. More precisely, we obtain the following result in \cref{c:uniformization_formal3}:

\begin{introcorollary}\label{t:intro4}
    Let $A$ be an abeloid variety over $\SKKp$. After a base change as in \cref{t:intro3}, the following are equivalent:
    \begin{enumerate}
        \item The generically good reduction part $B$ in the Raynaud uniformization of $A$ has good reduction.
        \item The Raynaud extension of $A$ arises as the pushout of the generic fiber of its marked Néron model.
        \item The fiberwise torus rank of the marked Néron model is constant (i.e., $\cB$ is a formal abelian scheme).
    \end{enumerate}
\end{introcorollary}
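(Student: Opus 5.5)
We prove the cycle of implications $(1)\Rightarrow(3)\Rightarrow(2)\Rightarrow(1)$, working throughout after the base change of \cref{t:intro3}. Fix the marked Néron model $\cE$ with its sequence $0\to\cT\to\cE\to\cB\to 0$. We use the description from \cref{c:uniformization_formal2}: the pushout of the generic fiber $\cE\genf$ along $\cT\genf\hookrightarrow T$ is an open subgroup $E'\subseteq E$ of the Raynaud extension $0\to T\to E\to B\to 0$, and it is an extension of the open subgroup $\cB\genf\subseteq B$ by $T$. Consequently statement (2) amounts to the equality $E'=E$, which in turn is equivalent to $\cB\genf=B$.

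\textbf{From (3) to (2).} Assume $\cB$ is a formal abelian scheme. Then $\cB\genf$ is proper over $\SKKp$, and by the rigidity of formal abelian schemes its $\SKKc$-fiber agrees with the good reduction model of $B^\circ$ coming from the classical uniformization. The inclusion $\cB\genf\subseteq B$ is an open immersion of abeloid varieties that is an isomorphism on rank one fibers. Both spaces are partially proper, so the equality $((-)^\circ)_*X^\circ=X$ of \cref{p:valuative_strong_criterion}, or alternatively the equivalence of \cref{p:equivalence_proper}, forces $\cB\genf=B$. Hence $E'=E$.

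\textbf{From (2) to (1).} If $E'=E$, then $\cB\genf=B$. Here $\cB$ is formal semi-abelian and its generic fiber $B$ is proper, so $B$ has a smooth formal group model. We must also show that $\cB$ has no toric part on the special fiber over $\Spec(K^+/K^{\circ\circ})$. Suppose it had one. Then $\cB\genf$ would contain points, namely those whose specialization leaves every quasi-compact part of the torus direction over $K^+$, that are not in the image of a quasi-compact model. This contradicts properness of $B$, by the valuative criterion applied to a rank-two point. So $\cB$ is an abelian model, which gives (1), and in the same stroke (3).

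\textbf{From (1) to (3).} Suppose $B$ has a formal abelian model $\cB'$ over $\SKp$. We compare $\cB'$ with $\cB$ via the gluing equivalence \cref{p:pushout_equivalence}. Over $\SKc$ both restrict to the good reduction model of $B^\circ$. Over $\Spec(K^+/K^{\circ\circ})$ the reduction of $\cB$ is the semi-abelian model produced by \cref{l:semi-abelian-model}. That model is the identity component of the Néron model of the semi-abelian variety over $k$, and by \cref{p:semi-abelianAreNeron} it receives a map from the abelian scheme given by the reduction of $\cB'$. This map restricts to an isomorphism on generic fibers over $k$. A homomorphism from an abelian scheme to a semi-abelian scheme over a valuation ring that is an isomorphism generically is an isomorphism, because the fiberwise torus rank is upper semicontinuous and the abelian rank is preserved. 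Therefore $\cB\cong\cB'$, and $\cB$ is abelian.

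The main obstacle is the (2)$\Rightarrow$(1) step, specifically excluding a toric degeneration of $\cB$ over the closed point of $K^+/K^{\circ\circ}$ while $\cB\genf$ is still all of $B$. We plan to argue this by picking a $\Spa(L,L^+)$-point of $B$ whose image in $\cB$'s special fiber would have to lie in the torus part with unbounded valuation, and to derive a contradiction from $\cB$ being quasi-compact over the affine base.
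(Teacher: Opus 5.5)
Your cycle $(1)\Rightarrow(3)\Rightarrow(2)\Rightarrow(1)$ is the paper's, and your steps $(3)\Rightarrow(2)$ and $(2)\Rightarrow(1)$ are fine. For $(3)\Rightarrow(2)$ you use full faithfulness of $(-)^\circ$ on partially proper spaces, whereas the paper notes that the open immersion $\cB\genf\to B$ is proper and hence surjective; both work. The genuine problem is in $(1)\Rightarrow(3)$. Write $V:=K^+/K^{\circ\circ}$. You claim that a homomorphism $\cB'_V\to\cB_V$ from an abelian scheme to a semi-abelian scheme which is generically an isomorphism is an isomorphism. The claim is true, but your justification does not prove it. Upper semicontinuity of the torus rank only says the torus rank can jump \emph{up} under specialization, which is exactly what you must rule out. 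Nothing you state prevents the kernel of the special-fiber map from being positive-dimensional, so ``the abelian rank is preserved'' is unsupported. The missing input is properness:
\begin{itemize}
\item $\cB'_V\to\cB_V$ is proper (proper source, separated target), so its image is closed.
\item The image contains the generic fiber, which is dense in $\cB_V$ because $\cB_V$ is flat over $V$; hence the map is surjective.
\item So every fiber of $\cB_V$ is the image of an abelian variety, hence proper, hence of torus rank $0$.
\end{itemize}
A smaller point: a Néron model of $\cB_k$ over $V$ need not exist in general. Only the marked Néron property of \cref{p:semi-abelianAreNeron} is available, and that is all you actually use.

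The paper avoids this algebraic detour. By \cref{p:semi-abelian_subgroup}, $\cB$ is the marked Néron model of $B$. So $\mathrm{id}_B$, viewed as a pointed map $\cB'\genf=B\to B$, factors through $\cB\genf$, giving $\cB\genf=B$. Then $\cB$ is proper by \cref{l:generic_fiber_properties}(2), so its reduction is a proper semi-abelian scheme, hence abelian. The same lemma removes what you call the main obstacle in $(2)\Rightarrow(1)$. Once $\cB\genf=B$, the smooth formal group model $\cB$ already gives (1) by \cref{d:good_reduction}, and it is automatically proper (compare \cref{r:good_reduction_abelian}), so there is no toric degeneration to exclude. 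Your sketched attack via quasi-compactness could not work as stated, because formal semi-abelian schemes are always quasi-compact. The relevant input is the properness comparison of \cref{l:generic_fiber_properties}(2).
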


We briefly describe the organization of the paper. 
In
\cref{s:2} we recall the necessary background on valuation rings, adic
spaces, formal models, and classical Raynaud uniformization.  In
\cref{s:3} we compare spaces over $\operatorname{Spa}(K,K^+)$ with
their rank one fibers.  \cref{s:4}
uses these results to construct the higher rank Raynaud uniformization
and studies dual abeloid varieties.  \cref{s:5} considers N\'eron models
of semi-abelian varieties over arbitrary valuation rings.  Finally, in \cref{s:6} we apply these
algebraic results to construct formal N\'eron models over
$\operatorname{Spf}(K^+)$, compare them with Raynaud uniformization, and show that $[m]$ is finite faithfully flat on Néron models {in the algebraically closed setting}.
We also discuss the complete picture for elliptic curves.

\subsection*{Acknowledgements} We would like 
to thank Ben Heuer for the suggestion to study Raynaud uniformization of abeloid varieties over general affinoid fields via compactification. We are also grateful to Lucas Gerth for his useful comments on our manuscript and for suggesting to look at torsion points. We thank Katharina H\"ubner for suggesting to the first author to get rid of a finite type condition for the inclusion $K^+\subseteq K^\circ$. 
This project has received funding from the Deutsche Forschungsgemeinschaft (DFG, German Research Foundation) TRR 326 \emph{Geometry and Arithmetic of Uniformized Structures}, project number 444845124. 

\section{Background, notation and conventions}\label{s:2}

\subsection{Valuation rings and affinoid fields}

In this paper, we work over a non-Archimedean field $K$, i.e., over a field $K$ which is complete with respect to a non-trivial non-Archimedean absolute value. The associated rank one valuation ring is denoted by $K^\circ$ and the residue field by $k$. We denote by $\varpi\in K^{\circ\circ}=\mathfrak{m}_{K^\circ}$ a pseudo-uniformizer, and $K^+\subseteq K^\circ$ always denotes a valuation ring of $K$. A pair of the form $(K, K^+)$ is an \textit{affinoid field}.  An \textit{affinoid field extension} of $(K, K^+)$ is an affinoid field $(L, L^+)$ and a morphism of Huber pairs $(K, K^+)\to (L, L^+)$ such that $L^+\cap K=K^+$. It is called finite (separable, respectively) if $L/K$ is finite (separable, respectively). Our main objects of study are adic spaces over a base $\SKKp$.

\begin{example}\label{e:affinoid_field}
Let us consider examples of higher rank valuation rings and affinoid fields.
\begin{enumerate}[leftmargin=2.5em]
\item Let $x\in X$ be a point of an analytic adic space. The completed residue field $(\kappa(x),\kappa(x)^+)$ at $x$ is an affinoid field, and the height of $\kappa(x)^+$ is the codimension of $x$ plus $1$. If $Y\to X$ is a morphism of adic spaces, then
\begin{equation*}
    Y_x:=Y\times_X \Spa(\kappa(x),\kappa(x)^+)
\end{equation*}
defines an adic space over $ \Spa(\kappa(x),\kappa(x)^+)$ under mild conditions (e.g., when $Y\to X$ is locally of weakly topologically finite type). 

\item More explicitly, consider the adic closed unit disc
\begin{equation*}
    \dB_{\dC_p}:=\Spa(\dC_p\langle \hspace{0.15mm}t\hspace{0.15mm}\rangle, \dC_p^\circ\langle \hspace{0.15mm}t\hspace{0.15mm}\rangle).
\end{equation*}
The Gauss point $g\in \dB_{\dC_p}$ is given by the valuation $|\sum_n a_n t^n|_{g} := \max_n |a_n|_p$.  Let $x\in \dB_{\dC_p}$ be a height two point generalizing to $g$, i.e., a type $5$ point of $g$ (see, e.g., \cite[Lecture 11]{Conrad2015PerfectoidSeminar}). Then $x$ is given by a rank $2$ valuation on $K := \Frac(\dC_p \langle \hspace{0.15mm}t\hspace{0.15mm} \rangle)^\wedge$, where we complete with respect to the Gauss norm. If $K^+$ denotes the associated valuation ring, then $(K, K^+)$ is an affinoid field whose adic spectrum has two points.

\item  Consider $K:=\dQ_p((t))$ with the $t$-adic topology. We equip the residue field $\dQ_p$ with the $p$-adic 
valuation.
This induces the rank $2$ valuation 
    \begin{equation}\label{eq:valuation_example}\tag{$*$}
        K^\times \ni f=\sum_{j\geq j_0, \,f_{j_0}\neq 0} f_j t^j\longmapsto (\lvert f\rvert_t, \lvert f_{j_0}\rvert_p)\in \Gamma_{{\lvert \,\cdot\,\rvert_t}}\times \Gamma_{\lvert \,\cdot\,\rvert_p},
    \end{equation} 
     where the value group carries the lexicographic order. Its valuation ring is
     \begin{equation*}
         K^+=\dZ_p + t\dQ_p[[t]].
     \end{equation*}
     Moreover, the residue field is $k=\dQ_p$, and $K^+/{K^{\circ\circ}}=\dZ_p$. This gives another example of $\SKKp$ of rank 2. The valuation (\ref{eq:valuation_example}) is the closed point of $\SKKp$, and the generic point is given by the $t$-adic valuation. More generally, this construction can be carried out for every
     valued field
     in place of $\dQ_p$.
    \end{enumerate}
\end{example}

Note that every continuous valuation $v$ on $K$ which is at most one on $K^+$ gives rise to a valuation ring  $R$ with $K^+ \subset R \subset K^\circ$. These valuation rings form a totally ordered set by inclusion, and they are all equal to  a localization $K^+_{\mathfrak{p}}$ by some prime ideal $\mathfrak{p}$ in $K^+$. In fact, localization induces 
a  homeomorphism
\[\Spec(K^+)\setminus \left\{(0)\right\} \longrightarrow \SKKp .\]
Hence $\SKKp$ is totally ordered, its maximal element is the generic point $v_{K^\circ}$, and its minimal element is $v_{K^+}$. In particular, the generic point is the image of the natural map $\SKKc \to \SKKp$. Moreover, let us recall that the map
\begin{equation*}
    \{R\subseteq K^\circ\mid R\text{ is a valuation ring of } K\} \longrightarrow \{\text{valuation rings of } k\},\quad R\longmapsto R/K^{\circ\circ},
\end{equation*}
is bijective. The inverse is given by mapping a valuation ring $V\subseteq k$ to the preimage of $V$ under the reduction map $K^\circ \to k$.

\subsection{Conventions}
Throughout this paper, $(K, K^+)$ will always denote an affinoid field, $\varpi\in K^+$ denotes a pseudo-uniformizer, and $k$ denotes the residue field $K^\circ/K^{\circ\circ}$. We use the following conventions throughout.

\begin{convention}\label{convention_adic} 
All adic spaces are assumed to be analytic and locally strongly Noetherian in the sense of \cite[Section 1.1]{Huber1996EtaleCohomology}. Additionally, whenever we write down a Huber pair $(A, A^+)$, we implicitly assume that it is a complete strongly Noetherian Tate pair. 
\end{convention}
For every such adic space $X$ we  write $\mathrm{Adic}_X^{\mathrm{P}}$ for the
full subcategory
of all such adic spaces $Y$ together with a morphism $Y \to X$ which has property $\mathrm{P}$.

Quotients and short exact sequences of adic group spaces are always understood as sheaves on the analytic site.

Recall that under \cref{convention_adic}, all morphisms of adic spaces are adic (hence spectral), and a fiber product of adic spaces exists when one of the maps is locally of weakly topologically finite type \cite[Prop.\ 1.2.2]{Huber1996EtaleCohomology}.

\begin{convention}\label{convention_formal}
By a formal scheme we always mean an adic formal scheme of finite ideal type in the sense of \cite[Def.\ I.1.1.16]{FujiwaraKato2018}.
\end{convention}

We call a formal scheme $\cX\to \SKp$ \textit{admissible} if it is flat and locally of finite presentation over $\SKp$, see \cite[Def.\ I.2.3.3]{FujiwaraKato2018}.

The construction of the \textit{adic generic fiber} that has classically been considered over $\SKc$ can also be carried out over $\SKp$, see \cite{Dine2025FormalModels}. More concretely, an affinoid admissible formal scheme $\Spf(A)\to \SKp$ has the adic generic fiber $\Spa(A[\varpi^{-1}],\intcl{A})\to \SKKp$, 
{where $\intcl{A}$ is the integral closure in $A[\varpi^{-1}]$.} This glues to a functor $(-)\genf$ from the category of admissible formal schemes over $\SKp$ to the category of adic spaces locally of topologically finite type over $\SKKp$.
Moreover, for every admissible formal scheme $\cX\to \SKp$ there is a natural surjective, closed, and continuous \textit{specialization map} of topologically ringed spaces $\mathrm{sp}_\cX\colon \cX\genf\to\cX$, see \cite[Cor.\ 4.16, 4.20]{Dine2025FormalModels}.
A \textit{formal model} of an adic space $X\to \SKKp$ is an admissible formal scheme $\cX\to \SKp$ together with an isomorphism $\cX\genf\cong X$ over $\SKKp$. When $X$ and $\cX$ carry structures of group spaces and if the isomorphism respects these structures, then $\cX$ is called a {formal} \textit{group model} of $X$.

A \textit{smooth} (\textit{proper}, respectively) formal scheme over $\SKp$ is an admissible formal scheme $\cX\to \SKp$ whose \textit{reduction} $\cX \times_{\SKp} \Spec(K^+/K^{\circ\circ})\to \Spec(K^+/K^{\circ\circ})$ is smooth (proper, respectively), see \cite[Prop.\ I.5.3.18, Prop.\ I.4.7.3]{FujiwaraKato2018} and \spcite{0CF4}. A \textit{formal abelian scheme} over $\SKp$ is a smooth proper formal group space $\cA\to \SKp$ with connected fibers. This is equivalent to being an admissible group space whose reduction over $\Spec(K^+/{K^{\circ\circ}})$ is an abelian scheme, see \cite[Prop.\ I.4.7.3]{FujiwaraKato2018} and \spcite{0CF4}.

\subsection{Rigid spaces} We recall that there is an equivalence of categories $r$ from the category of quasi-separated rigid spaces over $\mathrm{Sp}(K)$ in the sense of Tate to the category of quasi-separated adic spaces locally of topologically finite type over $\SKKc$
in the sense of Huber
(cf.\ \cite[(1.1.11)]{Huber1996EtaleCohomology}). We will use it freely to translate the classical theory to the language of adic spaces.

It preserves and reflects many properties of morphisms, e.g., quasi-compactness, being a closed immersion, and smoothness; see \cite[Prop.\ 3.11.21]{Huber1993Bewertungsspektrum} and \cite[1.7.11]{Huber1996EtaleCohomology}. It also preserves and reflects properness: this follows from \cite[Cor.\ 4.4, 4.5]{Temkin2000LocalProperties}, settling the question whether properness of a formal model implies properness of the rigid generic fiber. In general the equivalence does not reflect the property of being an open immersion, but it does preserve and reflect the property of being a quasi-compact open immersion into a quasi-separated space, see \cite[(1.1.11)]{Huber1996EtaleCohomology}.
Moreover, for every rigid space $X\to\mathrm{Sp}(K)$, $r$ induces an equivalence of topoi \begin{equation*}
    \mathrm{Sh}(X_{\mathrm{adm}})\cong \mathrm{Sh}(r(X)_\mathrm{an}).
\end{equation*}

\subsection{Raynaud uniformization over rank one valued 
fields}

We first recall the classical picture where the valuation ring $K^+=K^\circ$ is of rank one.

\begin{definition}
    An abeloid variety over $\SKKc$ is a proper, smooth, geometrically connected group space $A\to \SKKc$. We say that it has good reduction if it admits a formal group model over $\SKc$ that is a formal abelian scheme.
\end{definition}

We can view the following theorem as the non-Archimedean counterpart to the well-known uniformization of complex tori as quotients of a complex vector space by a lattice. 

\begin{theorem}[\mbox{\cite[Cor.\ 6.3.4, Thm.\ 7.6.4]{Luetkebohmert2016RigidGeometryCurves}}]\label{t:classical_uniformization_analytic}
   Let $A$ be an abeloid variety over $\SKKc$. After a finite separable field extension, there exists an isomorphism $A\cong E/M$, where $0 \to T\to E\to B \to 0$ is an extension of an abeloid variety $B$ with good reduction by a split algebraic torus $T\cong (\dG_{m,K}^{\mathrm{an}})^r$, and $\underline{\dZ^r}\cong M\subseteq{} E$ is a closed subgroup.
   
   In other words, we have a diagram of short exact sequences of adic group spaces over $\SKKc$ which we call the \textit{uniformization cross}\footnote{In fond remembrance of Robert Coleman, who used this terminology in conversations with the second author.} of $A\cong E/M$:
        % https://q.uiver.app/#q=WzAsOSxbMSwyLCJUIl0sWzIsMiwiRSJdLFszLDIsIkIiXSxbMCwyLCIwIl0sWzQsMiwiMCJdLFsyLDEsIk0iXSxbMiwzLCJBIl0sWzIsMCwiMCJdLFsyLDQsIjAiXSxbMCwxXSxbMSwyXSxbMywwXSxbMiw0XSxbMSw2XSxbNSwxXSxbNyw1XSxbNiw4XV0=
\[\begin{tikzcd}[column sep=0.9em,row sep=0.7em]
	&& 0 && \\
	&& M \\
	0 & T & E & B & 0. \\
	&& A \\
	&& 0
	\arrow[from=1-3, to=2-3]
	\arrow[from=2-3, to=3-3]
	\arrow[from=3-1, to=3-2]
	\arrow[from=3-2, to=3-3]
	\arrow[from=3-3, to=3-4]
	\arrow[from=3-3, to=4-3]
	\arrow[from=3-4, to=3-5]
	\arrow[from=4-3, to=5-3]
\end{tikzcd}\]
The horizontal extension is called the Raynaud extension of $A$.

Moreover, the Raynaud uniformization $E\to A$ is the universal (analytic) covering space of $A$ in the sense that $H^1(E_{\mathrm{an}}, \dZ)=0$ and that $E\to A$ is final among pointed morphisms $S\to A$ where $S$ is a connected adic space with $H^1(S_{\mathrm{an}}, \dZ)=0$. Every homomorphism between abeloid varieties that admit a uniformization cross uniquely lifts to their uniformization cross. 
\end{theorem}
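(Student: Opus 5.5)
This theorem is quoted from L\"utkebohmert's book, so the plan is to explain how its proof is assembled from the classical rank one theory rather than to prove anything new. We work throughout over $\SKKc$ and translate freely via the functor $r$ to rigid spaces.

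\textbf{Step 1: a formal group model.} First we produce, after a finite separable extension of $K$, a formal group model of an open subgroup of $A$. Concretely, we use the existence of a formal (marked) N\'eron model, \cite[Section 7.6]{Luetkebohmert2016RigidGeometryCurves}. This is a smooth formal group scheme $\cE$ over $\SKc$ whose generic fiber $\overline{E}\subseteq A$ is an open subgroup, and whose special fiber has semi-abelian identity component. Obtaining it requires:
\begin{itemize}
\item the semistable reduction theorem for curves, after a finite separable extension;
\item a Jacobian or curve-covering argument to pass from curves to $A$.
\end{itemize}
This is the deepest input, and I expect it to be the main obstacle. I would take it directly from \cite[Cor.\ 6.3.4]{Luetkebohmert2016RigidGeometryCurves} rather than reprove it.

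\textbf{Step 2: lifting the reduction.} After a further finite separable extension, the special fiber of $\cE^0$ is an extension of an abelian variety $\bar B$ by a split torus $\bar T\cong\dG_m^r$. Formal smoothness then lifts this extension to an extension of formal groups $0\to\cT\to\cE^0\to\cB\to 0$, where:
\begin{itemize}
\item $\cT$ is a formal split torus;
\item $\cB$ is a formal abelian scheme.
\end{itemize}
Here one uses the rigidity of tori (deformations of tori are unique) and the vanishing of the relevant obstruction groups. Set $B=\cB\genf$.

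Next we push out along $\cT\genf\hookrightarrow T=(\dG_m^{\mathrm{an}})^r$ to obtain $E=T\times^{\cT\genf}\cE^0\genf$. This is an extension of $B$ by $T$ into which $\cE^0\genf$ embeds as the open subgroup of points of absolute value $1$ in the torus direction.

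\textbf{Step 3: extending to $E\to A$.} The inclusion $\cE^0\genf\hookrightarrow A$ extends to a homomorphism $E\to A$. The reason is that the fibers of $E\to B$ are torus torsors, and by the maximum principle the map extends torus-direction-wise; this is the same argument as in Bosch--L\"utkebohmert's degeneration theory. We then set $M=\ker(E\to A)$.

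\textbf{Step 4: $M$ is a full lattice.}
\begin{itemize}
\item $M$ is discrete, since $M$ meets the open subgroup $\cE^0\genf$ trivially.
\item Under the valuation map $E\to\mathbb R^r$ (defined by pushing out the torus valuation), $M$ has relatively compact fundamental domain. This follows from properness of $A$, so $M\cong\dZ^r$ is a full lattice and $E/M\cong A$.
\item Surjectivity of $E\to A$ follows because $E/M$ is proper and $E\to A$ is étale with open image, so the image is both open and closed.
\end{itemize}

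\textbf{Step 5: universality and lifting of homomorphisms.}
\begin{itemize}
\item $H^1(E_{\mathrm{an}},\dZ)=0$ holds because $B$ has good reduction, so $H^1(B,\dZ)=0$, and $T$ has trivial $H^1$ with $\dZ$ coefficients.
\item Universality of $E\to A$ then follows from covering space theory, since $M$ is the deck group.
\item Homomorphisms lift uniquely via this universal property together with rigidity, using that $E$ is connected.
\end{itemize}
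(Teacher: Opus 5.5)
\textbf{Verdict: genuine gap in Steps 3 and 4.} Steps 1, 2 and 5 are acceptable as a citation-based outline; the paper itself gives no proof and simply quotes \cite[Cor.\ 6.3.4, Thm.\ 7.6.4]{Luetkebohmert2016RigidGeometryCurves}. But the arguments you supply yourself in Steps~3 and~4 do not work, and these steps carry the substance of the theorem once a formal model with semi-abelian reduction is available.

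\textbf{Step 3.} No ``maximum principle'' continues a morphism defined on the formal torus $\overline T=\cT\genf=\{|t_1|=\dots=|t_r|=1\}$ to the full torus. The maximum principle bounds functions; it does not continue them, and a Laurent series convergent on $|t|=1$ need not converge on any larger annulus. The group law cannot push the map beyond $\overline T$ either, because $\overline T$ is itself a subgroup. The Tate curve $A=\dG_m^{\mathrm{an}}/q^{\dZ}$ shows what is at stake. The extension of $\overline T\hookrightarrow A$ must map the annulus $\{|q|<|t|<1\}$ onto the part of $A$ outside $\overline E$, and its kernel $q^{\dZ}$ is global information about $A$ that no principle looking only at $\overline T$ can produce. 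Properness of $A$ has to enter in an essential, global way. This extension is exactly part of the cited results (it is statement (2) of \cref{t:classical_uniformization_formal1}), so it should be cited, not attributed to a local principle.

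\textbf{Step 4.} This step is circular. You deduce that $M$ is cocompact under the valuation map from properness of $A$, which requires knowing that $E\to A$ is surjective. You then deduce surjectivity from properness of $E/M$, which presupposes that $M$ is already a full lattice; for a discrete subgroup of rank $<r$ the quotient is not proper. Openness of the image gives nothing by itself: $\overline E$ is already an open subgroup of the connected space $A$ that differs from $A$ when $r>0$, just as $\{|t|=1\}\subsetneq\dG_m^{\mathrm{an}}$. Surjectivity and the lattice property are again part of the cited theorem.

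\textbf{Fix and remaining remarks.} Once Steps~3--4 are cited as well, your proposal reduces to the paper's treatment. Step~5 is fine: you trivialize the pulled-back $\underline{\dZ^r}$-torsor on $S$ using $H^1(S_{\mathrm{an}},\dZ)=0$. This matches the paper's remark that the universal property passes from rigid to adic spaces via $0\to\underline{\dZ^r}\to E\to A\to 0$. Two smaller corrections to Step~1:
\begin{itemize}
\item The semi-abelian formal model for an arbitrary abeloid variety is the marked N\'eron model of \cite[Sect.\ 7.6]{Luetkebohmert2016RigidGeometryCurves}; the paper cites Prop.~7.6.3, Thm.~7.6.4 and Rem.~7.6.8 for it in \cref{t:classical_uniformization_formal1}.
\item A Jacobian or curve-covering argument can only reach algebraic $A$, since a rigid-analytic quotient of a Jacobian is an abelian variety. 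A non-algebraic abeloid variety is not dominated by Jacobians, so this cannot be how that input is obtained in general.
\end{itemize}
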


Note that the universal property of $E$ generalizes from rigid spaces to adic spaces. This follows from the long exact sequence coming from $0\to \underline{\dZ^r}\to E\to A\to 0$. 

The theory of Raynaud uniformization is connected to the existence of suitable
formal models. If $K^\circ$ is a discrete valuation ring, and $K$ is big enough so that the N\'eron model of $A$ has semi-stable reduction, we may obtain $E$ by lifting its special fiber, see \cite{BoschLuetkebohmert1991Degenerating}. For general rank one valuation rings, we still have a marked N\'eron model constructed by Lütkebohmert \cite[Section 7.6]{Luetkebohmert2016RigidGeometryCurves}. See \mbox{\cite[ Rem.\ 7.6.8]{Luetkebohmert2016RigidGeometryCurves}} or \cref{d:markedNeron_K+} for the definition of a marked Néron model.

\begin{theorem}[\mbox{\cite[Prop.\ 7.6.3, Thm.\ 7.6.4, Rem.\ 7.6.8]{Luetkebohmert2016RigidGeometryCurves}, \cite[p.\ 656]{BoschLuetkebohmert1991Degenerating}}]\label{t:classical_uniformization_formal1}
   Let $A$ be an abeloid variety over $\SKKc$. After a finite separable field extension, $A$ admits a marked Néron model
   $\cE\to \SKc$ that fits into a short exact sequence
    % https://q.uiver.app/#q=WzAsNSxbMSwwLCJcXGNUIl0sWzIsMCwiXFxvdmVybGluZXtcXHJlc3Ryc21hbGx7RX17XFxjQlxcZ2VuZn19Il0sWzMsMCwiXFxjQiJdLFswLDAsIjAiXSxbNCwwLCIwIl0sWzMsMF0sWzAsMV0sWzEsMl0sWzIsNF1d
\[\begin{tikzcd}
	0 & \cT & {\cE} & \cB & 0
	\arrow[from=1-1, to=1-2]
	\arrow[from=1-2, to=1-3]
	\arrow[from=1-3, to=1-4]
	\arrow[from=1-4, to=1-5]
\end{tikzcd}\]
    where $\cB$ is a formal abelian scheme and $\cT\to \SKc$ is a split formal torus. Then
    \begin{enumerate}
        \item let $\cT\genf\to T$ be the inclusion in its full algebraic torus, which is isomorphic to a power of $\dG_{m,K}^{\mathrm{an}}$. The pushout of the generic fiber
    % https://q.uiver.app/#q=WzAsMTAsWzEsMCwiXFxjVFxcZ2VuZiJdLFszLDAsIlxcY0JcXGdlbmYiXSxbMSwxLCJUIl0sWzIsMSwiRSJdLFszLDEsIlxcY0JcXGdlbmYiXSxbMCwwLCIwIl0sWzAsMSwiMCJdLFs0LDAsIjAiXSxbNCwxLCIwIl0sWzIsMCwiXFxjRVxcZ2VuZiJdLFswLDJdLFsyLDNdLFszLDRdLFsxLDQsIiIsMCx7ImxldmVsIjoyLCJzdHlsZSI6eyJoZWFkIjp7Im5hbWUiOiJub25lIn19fV0sWzMsMCwiIiwwLHsic3R5bGUiOnsibmFtZSI6ImNvcm5lciJ9fV0sWzUsMF0sWzYsMl0sWzEsN10sWzQsOF0sWzAsOV0sWzksMV0sWzksM11d
\[\begin{tikzcd}[row sep=small]
	0 & {\cT\genf} & {\cE\genf} & {\cB\genf} & 0 \\
	0 & T & E & {\cB\genf} & 0
	\arrow[from=1-1, to=1-2]
	\arrow[from=1-2, to=1-3]
	\arrow[from=1-2, to=2-2]
	\arrow[from=1-3, to=1-4]
	\arrow[from=1-3, to=2-3]
	\arrow[from=1-4, to=1-5]
	\arrow[equals, nfold, 
    from=1-4, to=2-4]
	\arrow[from=2-1, to=2-2]
	\arrow[from=2-2, to=2-3]
	\arrow["\lrcorner"{anchor=center, pos=0.125, rotate=180}, draw=none, from=2-3, to=1-2]
	\arrow[from=2-3, to=2-4]
	\arrow[from=2-4, to=2-5]
\end{tikzcd}\]
is the Raynaud extension of $A$.
    \item The open immersion $\cE\genf\subseteq A$
    uniquely extends as
    a homomorphism $\cE\genf\subseteq E\to A$, and the latter map is the Raynaud uniformization of $A$.
    \end{enumerate}
\end{theorem}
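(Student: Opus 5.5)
The plan is to derive everything from the analytic uniformization in \cref{t:classical_uniformization_analytic}, using its uniqueness and functoriality. After the finite separable base change we have $A\cong E/M$ with $0\to T\to E\to B\to 0$, where $B$ has good reduction. Let $\cB$ be the formal abelian model of $B$. The extension is a $T$-torsor over $B$, so it corresponds to a homomorphism from the character group $\dZ^r$ of $T$ to the dual $B'$. Since $B'$ also has good reduction, these line bundles extend to $\cB$, possibly after a further base change. This gives a formal $\cT$-torsor $\cE\to\cB$ with a formal group structure, and $\cE\genf\subseteq E$ is the subgroup where $E$ has absolute value $1$ in the torus coordinates.

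Consider the valuation map $\ell\colon E\to\mathbb{R}^r$, obtained by taking $-\log|\cdot|$ of the torus coordinates in local trivializations. It is well defined because the transition functions are units on $\cB$. It is a homomorphism, $\ell(M)$ is a full lattice, and $\ker\ell=\cE\genf$. Hence $M\cap\cE\genf=0$ and $E\to A$ restricts to an injective homomorphism on $\cE\genf$. Because $M$ acts discontinuously, it is an open immersion onto an open subgroup of $A$.

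Next I verify the marked Néron property of $\cE$ in the sense of \cite[Rem.\ 7.6.8]{Luetkebohmert2016RigidGeometryCurves}: a morphism from the generic fiber of a smooth formal scheme into $A$ that lands in the marked subgroup extends uniquely over the formal model. I would argue this in three steps. First, lift the morphism to $E$ using the universal covering property; this works locally on the formal model, since formal balls have trivial $H^1(-,\dZ)$. Second, the lift lands in $\ker\ell$ on the locus where it lands in $\cE\genf$. Third, extend over the formal models, using that $\cB$ is Néron for $B$ (proper smooth) and that $\cT$ is Néron for $\cT\genf$. I expect this to be the main obstacle, because one must control the lattice translates locally and glue.

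For (1), let $E'$ be the pushout of $\cE\genf$ along $\cT\genf\hookrightarrow T$. By construction, $E'$ is the $T$-torsor over $B=\cB\genf$ obtained by extending the structure group of the $\cT\genf$-torsor $\cE\genf$. This torsor is exactly $E$, because $\cE$ was built from $E$ and the $T$-torsor is determined by its reduction of structure group. The inclusions $\cE\genf\subseteq E$ and $T\subseteq E$ agree on $\cT\genf$, so the pushout property gives $E'\to E$. This map is an isomorphism by the five lemma.

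If instead one starts with an arbitrary marked Néron model $\cE$, its uniqueness identifies it with the one constructed above. For (2), the homomorphism $E\to A$ extends the open immersion $\cE\genf\subseteq A$. Uniqueness of the extension holds because $E=T\cdot\cE\genf$, and a homomorphism on $T$ is determined by its restriction to the Zariski-dense subgroup $\cT\genf$, via rigidity of tori. The identification of the extension with the Raynaud uniformization then follows from the universal property in \cref{t:classical_uniformization_analytic}.
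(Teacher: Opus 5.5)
The paper gives no proof of this statement. It is quoted from \cite[Prop.\ 7.6.3, Thm.\ 7.6.4, Rem.\ 7.6.8]{Luetkebohmert2016RigidGeometryCurves}, where the logic runs from formal to analytic. There, an open subgroup of $A$ with a formal semi-abelian model is produced first. The uniformization $E\to A$ of \cref{t:classical_uniformization_analytic} is then obtained by extending $\cE\genf\subseteq A$ over the pushout, with $M$ as kernel; the paper uses this same direction in the proof of \cref{c:uniformization_formal2}. You run it backwards, building $\cE$ from the Raynaud extension supplied by \cref{t:classical_uniformization_analytic}. That is acceptable only if \cref{t:classical_uniformization_analytic} is taken as an independent black box; it is not a proof from scratch. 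Granting it, your construction of $\cE$, the open-immersion argument via $\ell$, and your treatment of (1) and (2) are fine.

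The genuine gap is the marked N\'eron property. You formulate it as extending morphisms that \emph{already} land in $\cE\genf$. But \cref{d:markedNeron_K+} (equivalently \cite[Rem.\ 7.6.8]{Luetkebohmert2016RigidGeometryCurves}) requires the composite $\mathrm{Mor}^{\mathrm{pt}}(\cX,\cE)\to\mathrm{Mor}^{\mathrm{pt}}(\cX\genf,A)$ to be bijective. That is, \emph{every} pointed $\varphi\colon\cX\genf\to A$ must factor through $\cE\genf$. This maximality is the actual content, and your second step is a tautology that does not touch it. Your first step also fails as stated: affine pieces of a smooth formal scheme are not balls, so you get neither the local lifts nor a way to glue them.

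The missing idea is to control $\ell$ along a lift. Lift $\varphi$ globally to $\tilde\varphi\colon\cX\genf\to E$; this needs $H^1(\cX\genf,\dZ)=0$ for all smooth connected $\cX$, e.g.\ from contractibility of such generic fibres. Extend the induced map $\cX\genf\to B$ to $\cX\to\cB$. On a connected affine $\cU\subseteq\cX$ trivializing $\cE$, the torus coordinates of $\tilde\varphi$ are units on an affinoid with integral reduction. By multiplicativity of the supremum norm, they have constant absolute value. So $\ell\circ\tilde\varphi$ is locally constant, hence constant, and translating by $\tilde\varphi(x_0)\in M$ makes it zero. Thus $\varphi$ lands in $\cE\genf$, and the coordinates, now of absolute value one, extend over $\cU$ by \cref{l:smooth_int_closed}.

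Much more simply, once $\cE\genf\subseteq A$ is known to be an open subgroup with formal semi-abelian model, you can cite \cite[Prop.\ 5.4.6]{Luetkebohmert2016RigidGeometryCurves}, the rank one case of \cref{p:semi-abelian_subgroup}. It yields the marked N\'eron property directly, and your third step can be discarded.
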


For an abeloid variety $A\to \SKKc$, the presheaf of translation-invariant line bundles (cf.\ \cref{d:dual_abeloid}) is representable by an abeloid variety $A^\vee\to \SKKc$ called the \textit{dual abeloid variety} of $A$. This is \cite[Cor.\ 7.6.5]{Luetkebohmert2016RigidGeometryCurves} and a consequence of \cref{t:classical_uniformization_formal1}. More precisely, when $A$ admits a Raynaud uniformization over $\SKKc$, then one has an explicit description of $A^\vee$ in terms of the dual of its good reduction part $B$. There is a close relation between the uniformization crosses of $A$ and $A^\vee$, see
\cite[Thm.\ 6.3.3]{Luetkebohmert2016RigidGeometryCurves}.

\section{\texorpdfstring{From $\SKKp$ to $\SKKc$ and back}
    {From Spa(K,K⁺) to Spa(K,K⁰) and back}}\label{s:3}

{
\subsection{\texorpdfstring{Descending properties from the $\SKKc$-fiber}
    {Descending properties from the Spa(K,K⁰)-fiber}}
Recall our convention \ref{convention_adic} that all adic spaces are analytic and locally strongly Noetherian.
For an adic space $X\to \SKKp$ we denote the base change with $\SKKc$ by 
$X^\circ:=X\times_{\SKKp} \SKKc$.
It exists as $\SKKc\to\SKKp$ is locally of weakly topologically finite type (cf.\ \cite[Prop.\ 1.2.2]{Huber1996EtaleCohomology}).
For example, when $X=\Spa(R, R^+)$ is an affinoid space {over $\SKKp$}, then 
\begin{equation}\label{eq:affinoid_circ}
    X^\circ=\Spa(R, R^+[K^\circ])
\end{equation}
and the topology on $R$ is unchanged. Note that $R^+[K^\circ]=(K^+\cap (K^\circ)^\times)^{-1}R^+$. It is therefore integrally closed in $R$.
The space $X^\circ$ is often classically better understood than $X$. For example, $X^\circ$ has a natural counterpart in the category of Tate's rigid spaces if $X$ is locally of topologically finite type and quasi-separated. We prove some basic observations that relate objects and morphisms of adic spaces over $\SKKp$ to their $\SKKc$-fiber.
}

By \cite[Def.\ 1.3.3]
{Huber1996EtaleCohomology}, a morphism $f$ between
adic spaces is called partially proper if it is locally of ${}^+\!$weakly topologically finite type in the sense of \cite[Def.\ 1.2.1]{Huber1996EtaleCohomology}, separated, and
universally specializing. It is proper if 
it is quasi-compact and partially proper.
A proper morphism is universally closed.

\begin{example}
	Let $X$ be a separated $K$-scheme locally of finite type. Then the associated analytification $X^\mathrm{an}\to \SKKp$ is partially proper. If $X$ is proper, then $X^\mathrm{an}$ is proper (cf.\ \cite[Lem.\ 5.7.3]{Huber1996EtaleCohomology}).
    
    Moreover, note that the universal property of analytification implies that $(X^{\mathrm{an}})^\circ\to \SKKc$ is the analytification of $X$ along $\SKKc$.
\end{example}

Moreover, we have  the following valuative criterion.

\begin{lemma}[\mbox{\cite[Lem.\ 1.3.10]{Huber1996EtaleCohomology}, \cite[Prop.\ 6.13]{Ludwig2020AdicSpacesII}}]\label{l:valuative criterion}
    Let $f\colon X\to S$ be a quasi-separated and locally ${}^+\!$weakly topologically finite type morphism of
    adic spaces. Then $f$ is partially proper if and only if for every affinoid space $\Spa(R, R^+)\to S$, the natural map
    \[\mathrm{Mor}_S(\Spa(R, R^+),X)\longrightarrow \mathrm{Mor}_S(\Spa(R, R^\circ),X)\]
    is bijective.
\end{lemma}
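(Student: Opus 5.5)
Since this is the valuative criterion of \cite[Lem.\ 1.3.10]{Huber1996EtaleCohomology}, I would follow Huber's strategy and reduce everything to affinoid fields, that is, to rank-one generalizations inside fibers. Throughout, $f\colon X\to S$ is quasi-separated and locally of ${}^+\!$weakly topologically finite type. By base change along $\Spa(R,R^+)\to S$, which is allowed since both properties are stable under base change, the criterion becomes a statement about sections. For $X_R:=X\times_S\Spa(R,R^+)$ the question is whether restriction induces a bijection from sections of $X_R\to\Spa(R,R^+)$ to sections over $\Spa(R,R^\circ)\subseteq\Spa(R,R^+)$. The subspace $\Spa(R,R^\circ)$ consists of the points of $\Spa(R,R^+)$ that have no proper generalization; each point $y$ of $\Spa(R,R^+)$ has a unique such (rank one) generalization $y'$.

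For ``$\Rightarrow$'', I would first prove injectivity. Suppose two sections $s_1,s_2$ agree on $\Spa(R,R^\circ)$. Their equalizer is $\Spa(R,R^+)\times_{X_R\times X_R}\Delta$, which is closed because $f$ is separated. It contains every rank one point, and every point is a specialization of one of these, so it is everything. A closed subspace containing all of the underlying space still has to give equality of morphisms. For this I would work locally on affinoid opens of $X_R$, where a morphism is determined by the ring map $A\to R$, and $R$ is unchanged when $R^+$ is replaced by $R^\circ$.

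For surjectivity, take a section $s$ over $\Spa(R,R^\circ)$ and a point $y\in\Spa(R,R^+)$ with rank one generalization $y'$. Universal specialization lets the specialization $y'\rightsquigarrow y$ lift to some $x\rightsquigarrow x_0$ with $x=s(y')$. I would then show that $x_0$ is unique. In the fiber over $y$, the specializations of a point form a chain, namely the valuation rings between $\kappa^+$ and $\kappa^\circ$. Separatedness, applied to the two resulting maps $\Spa(\kappa(y),\kappa(y)^+)\to X_R$, forces uniqueness.

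Next choose an affinoid open $\Spa(A,A^+)\subseteq X_R$ containing $x_0$. It then contains $x$, and by a quasi-compactness and spectral-space argument it contains $s(U')$ for a rational neighbourhood $U$ of $y$, where $U'=U\cap\Spa(R,R^\circ)$. On $U'$ the section $s$ gives a map $A\to\cO(U)$ sending $A^+$ into $\cO^\circ$. The remaining point is that this map sends $A^+$ into $\cO(U)^+$. The reason is that for every point $z$ of $U$, the valuation $|a(z)|$ is at most one by construction of $x_0$. These local extensions then glue by the injectivity already proven. I expect this step to be the main obstacle: controlling neighbourhoods and checking $|a|\le1$ at non-rank-one points. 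The ${}^+\!$weakly topologically finite type condition is exactly what makes $A^+$ generated, up to integral closure, by finitely many elements to be checked.

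For ``$\Leftarrow$'', I would show that $f$ is universally specializing and separated.

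Universal specialization: base change preserves the hypothesis, so it suffices to lift a specialization $y'\rightsquigarrow y$ in $S$ with $y'$ rank one, starting from $x\mapsto y'$. Take $L=\kappa(x)$ and let $L^+$ be the valuation ring lying over $\kappa(y)^+$ inside $\kappa(x)^\circ$, as in the description of valuation rings between $K^+$ and $K^\circ$ recalled above. Surjectivity then extends $\Spa(L,L^\circ)\to X$ to $\Spa(L,L^+)\to X$, and the image of the closed point gives the desired specialization.

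Separatedness: quasi-separatedness makes the diagonal quasi-compact and a locally closed immersion. It is therefore closed as soon as it is specializing. That follows from the injectivity part applied with affinoid fields $(R,R^+)=(L,L^+)$.
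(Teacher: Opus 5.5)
Your overall architecture is the standard one behind the cited results: pass to sections of $X_R\to\Spa(R,R^+)$, get injectivity from separatedness, construct the extension locally on affinoids and glue, and for the converse extract separatedness and specialization from the criterion over affinoid fields. Two steps, however, fail as written.

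\textbf{Surjectivity in ``$\Rightarrow$''.} The claim that $|a(z)|\le 1$ for \emph{every} $z\in U$ ``by construction of $x_0$'' is false. Let $\phi\colon A\to\cO(U)$ be the map induced by $s|_{U'}$. Then $x_0\in\Spa(A,A^+)$ only gives $|\phi(a)(y)|\le 1$, and the same at generalizations of $y$; at other points of $U$ nothing has been arranged. For example, take $s=\mathrm{id}$ on a disc $\Spa(K\langle T\rangle,R^+)$ with $T\notin R^+$, let $\Spa(A,A^+)=\{|T|\le 1\}$, and let $U$ be the whole disc. Then $U'\subseteq\Spa(A,A^+)$, yet $\phi(T)=T\notin\cO^+(U)$.

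The missing step is to shrink $U$. Choose $\Spa(A,A^+)$ together with an affinoid neighbourhood $V\supseteq U$ of $y$ such that $A^+$ is the integral closure of $\cO^+(V)[E\cup A^{\circ\circ}]$ with $E$ finite. Replace $U$ by $\{z\in U : |\phi(e)(z)|\le 1 \text{ for all } e\in E\}$, which contains $y$ precisely because of $x_0$. On this set, elements of $\cO^+(V)$ are bounded by $1$ and images of $A^{\circ\circ}$ are topologically nilpotent. Passing to the integral closure is harmless since valuation rings are integrally closed. You point to the finiteness condition, but this is the place where it must actually be used.

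Also, $\Spa(R,R^\circ)$ is not the set of rank-one points. It contains all of them, but in general also higher-rank points, such as the type~5 points of $\Spa(K\langle T\rangle,K^\circ\langle T\rangle)$. This is harmless for injectivity. Your neighbourhood argument, though, must also send the points $c\in\Spa(R,R^\circ)$ generalizing $y$ into $\Spa(A,A^+)$. This follows from your uniqueness argument applied at $c$. That uniqueness rests on $\kappa(s(y'))=\kappa(y')$ (because $s$ is a section) together with separatedness. It does not rest on specializations forming a chain, which they generally do not.

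\textbf{Specialization in ``$\Leftarrow$''.} You reduce to lifting from a point $x$ over a rank-one $y'$ via $\Spa(L,L^\circ)$ with $L=\kappa(x)$. This reduction is unjustified, and in fact invalid. A point over a rank-one point need not itself have rank one. For such $x$, the map $\Spa(L,L^\circ)\to X$ only sees the rank-one generalization $x'$ of $x$. So the closed point of $\Spa(L,L^+)$ lands on a specialization of $x'$ that need not specialize $x$.

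Here is a concrete failure. Take the closed disc over a rank-two $\Spa(K,K^+)$, and let $x$ be the type~5 point ``$|T|<1$'' at the Gauss point; it lies over the rank-one point of the base. Then $x$ has a unique specialization $x_{00}$ over the closed point of the base, and $x_{00}$ is a closed point of the disc. Its complement is separated over $\Spa(K,K^+)$ and lifts specializations from every rank-one point, but not from $x$.

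The repair uses the full hypothesis. Take any $x$ with $f(x)\rightsquigarrow y$. Then $\kappa(f(x))=\kappa(y)$, so Chevalley's extension theorem gives a valuation ring $L^{++}\subseteq\kappa(x)^+$ with $L^{++}\cap\kappa(y)=\kappa(y)^+$. Extend $\Spa(L,L^\circ)\to X$ to $g\colon\Spa(L,L^{++})\to X$. Now apply the \emph{injectivity} half of the hypothesis to $(L,\kappa(x)^+)$: it shows that the composite $\Spa(L,\kappa(x)^+)\to\Spa(L,L^{++})\xrightarrow{g}X$ is the canonical map attached to $x$. Hence $g$ passes through $x$, and the image of the closed point is a specialization of $x$ over $y$.

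Your separatedness argument is fine once you observe one point. The rank-one generalization of a specialization of $\Delta(x)$ is $\Delta(x')$, and the two projections agree there.
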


Since $(-)^\circ$ only changes the $+$-component of the structure sheaf and does so in a functorial and continuous way, one deduces the following statement in our setting.

\begin{proposition}\label{p:valuative_strong_criterion}
    Let $S\to \SKKp$ be an
    adic space and suppose that $X\to S$ is partially proper (separated, respectively). For every 
    adic space $T\to S$, the canonical and natural maps
    \begin{equation*}
        \mathrm{Mor}_S(T,X)\longrightarrow \mathrm{Mor}_{S^\circ}(T^\circ,X^\circ)\longrightarrow \mathrm{Mor}_S(T^\circ,X)
    \end{equation*}
    are bijective (injective, respectively).
\end{proposition}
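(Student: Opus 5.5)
First identify the maps. The first map sends $f\colon T\to X$ to its base change $f^\circ\colon T^\circ\to X^\circ$ over $S^\circ$. The second map sends $g\colon T^\circ\to X^\circ$ to the composite $T^\circ\to X^\circ\to X$. By the universal property of $X^\circ=X\times_S S^\circ$, this second map is bijective in complete generality, and the two structures over $S$ correspond. So everything reduces to the composite map, which is restriction along the canonical morphism $\iota_T\colon T^\circ\to T$:
\[
\mathrm{Mor}_S(T,X)\longrightarrow \mathrm{Mor}_S(T^\circ,X),\qquad f\longmapsto f\circ\iota_T .
\]
We must show this is bijective when $X\to S$ is partially proper, and injective when $X\to S$ is separated.

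Both sides are sheaves on $T$: the left side is the sheaf of $S$-morphisms, and the right side is $U\mapsto \mathrm{Mor}_S(U^\circ,X)$. The latter is a sheaf because $(-)^\circ$ commutes with open covers, since $U^\circ=U\times_T T^\circ$ is an open subspace of $T^\circ$. Hence we may work locally on $T$ and reduce to $T=\Spa(R,R^+)$ affinoid. By \eqref{eq:affinoid_circ}, $T^\circ=\Spa(R,R^+[K^\circ])$, with the same ring $R$ and the same topology.

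For the bijectivity, apply \cref{l:valuative criterion} to $\Spa(R,R^+)\to S$ and to $\Spa(R,R^+[K^\circ])\to S$. This identifies both $\mathrm{Mor}_S(\Spa(R,R^+),X)$ and $\mathrm{Mor}_S(\Spa(R,R^+[K^\circ]),X)$ with $\mathrm{Mor}_S(\Spa(R,R^\circ),X)$. We need $R^+\subseteq R^+[K^\circ]\subseteq R^\circ$, so that $\Spa(R,R^\circ)\to\Spa(R,R^+[K^\circ])\to\Spa(R,R^+)$ are the natural maps. This holds because $K^\circ$ is power bounded in $R$, and $R^+[K^\circ]$ is integrally closed, as noted in the paper. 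The restriction maps are compatible with these identifications, so by two-out-of-three the map $\mathrm{Mor}_S(T,X)\to\mathrm{Mor}_S(T^\circ,X)$ is bijective. Gluing the affinoid statements then gives the global one.

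For the separated case, let $f_1,f_2\colon T\to X$ agree on $T^\circ$. Their equalizer is the pullback of the diagonal $\Delta\colon X\to X\times_S X$ along $(f_1,f_2)$. Since $X\to S$ is separated, $\Delta$ is a closed immersion, so the equalizer is a closed subspace $Z\subseteq T$ through which $\iota_T$ factors. The image of $T^\circ\to T$ contains every rank one (maximal generization) point of $T$, because each point of an analytic adic space generalizes to a rank one point with the same residue field and $K^\circ$ is bounded there. A closed subset containing all generizations is all of $T$, since every point is a specialization of a rank one point and closed sets are stable under specialization. So $Z$ has underlying space $T$. To conclude that $Z=T$ as adic spaces, one checks on affinoids that $Z$ is cut out by an ideal $I\subseteq R$ which vanishes in $R$, because $T^\circ\to T$ has the same ring $R$ of global functions and factors through $\Spa(R/I)$. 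Hence $f_1=f_2$.

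The main obstacle is keeping the bookkeeping in the affinoid reduction honest. One must justify that $\Spa(R,R^+[K^\circ])\to S$ is a legitimate test object for the valuative criterion, which is automatic since it is an affinoid adic space over $S$. One must also check that the bijection from \cref{l:valuative criterion} is exactly restriction, so that the compatibility with $R^+\subseteq R^+[K^\circ]\subseteq R^\circ$ really holds. The rest is formal.
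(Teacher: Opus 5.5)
Your proof is correct. For the partially proper case you follow the paper's argument: reduce to affinoid $T$ by gluing, then compare $\Spa(R,R^+)$ and $\Spa(R,R^+[K^\circ])$ via the valuative criterion, using that both have the same $\Spa(R,R^\circ)$.

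For the separated case your route is genuinely different. The paper also starts from the closed immersion $\mathrm{Eq}(f,g)\to T$. It notes that this map is in particular partially proper, and then reuses the partially proper case: $(-)^\circ$ is fully faithful on partially proper $T$-spaces. Since $\mathrm{Eq}(f,g)^\circ\to T^\circ$ is an isomorphism, so is $\mathrm{Eq}(f,g)\to T$.

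You argue instead directly with functions. Over an affinoid $\Spa(R,R^+)$ the closed immersion is $\Spa(R/I,\dots)$. Since $T^\circ\to T$ is the identity on $R$ and factors through it, the composite $R\to R/I\to R$ is the identity, so $I=0$. This is essentially the isomorphism $\cO_T\cong j_*\cO_{T^\circ}$ recorded in \cref{l:circ_embedding}.

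Your version has the advantage of not needing the valuative criterion or the partially proper case in the separated setting. The cost is that it relies on describing closed subspaces of strongly Noetherian affinoids by ideals of $R$, which the paper's more formal argument sidesteps. Your topological step showing that $Z$ has underlying space $T$ is superfluous once you know $I=0$.
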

\begin{proof}
    It follows from the universal property of fiber products that the second map is bijective. Let us now consider the first map. By gluing, we may assume that $T$ is affinoid. The statement for the partially proper case now follows from \cref{eq:affinoid_circ,l:valuative criterion}.
    
    In the separated case, suppose that $f,g\colon T\to X$ are such that $f^\circ=g^\circ$. The equalizer $\mathrm{Eq}(f,g)\to T$ is a closed immersion, hence in particular partially proper. The partially proper case of this proposition implies that $(-)^\circ$ is fully faithful on the category of partially proper $T$-spaces. Since $(\mathrm{Eq}(f,g))^\circ\to T^\circ$ is an isomorphism, it follows that $\mathrm{Eq}(f,g)\to T$ is an isomorphism as well.
\end{proof}

\begin{remark}\label{r:circ_fully_faithful}
	Let $X\to \SKKp$ be an adic space. \Cref{p:valuative_strong_criterion} shows that 
    \begin{equation*}
	    (-)^\circ\colon \mathrm{Adic}_X^{\mathrm{p. proper}}\longrightarrow \mathrm{Adic}_{X^\circ}^{\mathrm{p. proper}}
	\end{equation*}
    is fully faithful. When $\SKKc\to \SKKp$ is of ${}^+\!$weakly topologically finite type, then it follows from Huber's universal compactification theorem \cite[Thm.\ 5.1.5]{Huber1996EtaleCohomology} that the functor is an equivalence. 
    This assumption is equivalent to the existence of
    an $e \in K^+ \setminus K^{\circ\circ}$ such that $K^\circ = K^+[e^{-1}]$. However, we will not need it. 
\end{remark}

\begin{lemma}\label{l:circ_embedding}
    Let $X\to \SKKp$ be a morphism of
    adic spaces. Then $j\colon X^\circ \to X$ is a quasi-compact topological embedding, every element of $X$ is a specialization of an element of $X^\circ$, and $\cO_X \to j_*\cO_{X^\circ}$ is an isomorphism. Moreover,
    \begin{enumerate}[label=(\arabic*)]
    \item $j_*\colon \mathrm{Coh}(X^\circ)\to \mathrm{Coh}(X)$ is an equivalence with inverse $j^{-1}$.
    \item The square
    % https://q.uiver.app/#q=WzAsNCxbMCwxLCJcXG1hdGhybXtBZGljfV97WH1eXFxtYXRocm17ZmluaXRlfSJdLFsxLDEsIlxcbWF0aHJte0FkaWN9X3tYXlxcY2lyY31eXFxtYXRocm17ZmluaXRlfSJdLFswLDAsIlxcbWF0aHJte0NvaEFsZ30oWClee1xcbWF0aHJte29wfX0iXSxbMSwwLCJcXG1hdGhybXtDb2hBbGd9KFheXFxjaXJjKV57XFxtYXRocm17b3B9fSJdLFsyLDMsImpeey0xfSJdLFsyLDAsIlxcdW5kZXJsaW5le1xcU3BhfV97WH0iLDJdLFszLDEsIlxcdW5kZXJsaW5le1xcU3BhfV97WF5cXGNpcmN9IiwyXSxbMCwxLCIoLSlee1xcY2lyY30iXV0=
\[\begin{tikzcd}[row sep=scriptsize]
	{\mathrm{CohAlg}(X)^{\mathrm{op}}} & {\mathrm{CohAlg}(X^\circ)^{\mathrm{op}}} \\
	{\mathrm{Adic}_{X}^\mathrm{finite}} & {\mathrm{Adic}_{X^\circ}^\mathrm{finite}}
	\arrow["{j^{-1}}", from=1-1, to=1-2]
	\arrow["{\underline{\Spa}_{X}}"', from=1-1, to=2-1]
	\arrow["{\underline{\Spa}_{X^\circ}}"', from=1-2, to=2-2]
	\arrow["{(-)^{\circ}}", from=2-1, to=2-2]
\end{tikzcd}\]
    commutes up to natural isomorphism. All four functors are equivalences.
    \end{enumerate}
\end{lemma}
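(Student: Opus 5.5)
The argument is local on $X$. By \cref{eq:affinoid_circ}, $j$ restricts over an affinoid $U=\Spa(R,R^+)\subseteq X$ to the map $\Spa(R,R^+[K^\circ])\to\Spa(R,R^+)$. This map is a homeomorphism onto the subset of valuations $v$ with $v(a)\le 1$ for all $a\in R^+[K^\circ]$. Since $R^+[K^\circ]=(K^+\cap(K^\circ)^\times)^{-1}R^+$, this is the set of $v\in U$ with $v(e)\ge 1$, equivalently $v(e)=1$, for every $e\in K^+\cap (K^\circ)^\times$. That is, $X^\circ$ is the preimage of the generic point of $\SKKp$.

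Both spaces carry the subspace topology coming from rational subsets, and the rational subsets of $X^\circ$ are exactly the preimages of rational subsets of $U$. Hence $j$ is a topological embedding. It is quasi-compact because affinoids pull back to affinoids.

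For the specialization claim, take $x\in U$ with image $s\in\SKKp$. The generic point $\eta=v_{K^\circ}$ of $\SKKp$ generalizes $s$. Its preimage contains a generalization of $x$: on an analytic point, one takes the rank one generalization of $x$, which lies over $\eta$. So every point of $X$ specializes from a point of $X^\circ$.

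For $\cO_X\to j_*\cO_{X^\circ}$, I reduce to rational subsets $W=U(\tfrac{f}{g})$. Then $j^{-1}(W)$ is the corresponding rational subset of $U^\circ$, and it has the same Huber ring $R\langle f/g\rangle$, with only the $+$-ring enlarged. So the sections agree. This gives $\cO_X(W)\cong\cO_{X^\circ}(j^{-1}W)$ on a basis, hence an isomorphism of sheaves. Here I use that every open of $X^\circ$ is a union of such preimages, so the basis statement suffices.

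For (1), coherent sheaves on strongly Noetherian affinoids correspond to finite $R$-modules, and this correspondence depends only on $R$, not on $R^+$. So $\mathrm{Coh}(U)\simeq \mathrm{Mod}^{\mathrm{fg}}_R\simeq\mathrm{Coh}(U^\circ)$. I would check that this equivalence is realized by $j^{-1}$ and $j_*$, using $M\otimes_R\cO$ together with the sheaf isomorphism just proved. The compatibility with restrictions to rational subsets holds by the same argument, so the local equivalences glue.

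For (2), the pullback $j^{-1}$ of a coherent algebra $\cA$ is a coherent $\cO_{X^\circ}$-algebra, and on an affinoid $U$ we have $\underline{\Spa}_X(\cA)|_U=\Spa(A,A^+)$, where $A=\cA(U)$ and $A^+$ is the integral closure of $R^+$ in $A$. The base change of this to $\SKKc$ is $\Spa(A,A^+[K^\circ])$ by \cref{eq:affinoid_circ}. On the other side, $\underline{\Spa}_{X^\circ}(j^{-1}\cA)|_{U^\circ}=\Spa(A,\text{integral closure of }R^+[K^\circ]\text{ in }A)$.

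The main point to check is that these two $+$-rings coincide: the integral closure of $R^+[K^\circ]$ in $A$ equals $A^+[K^\circ]$. This holds because localizing at the multiplicative set $K^+\cap(K^\circ)^\times$ commutes with integral closure.

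The two vertical functors are equivalences by the standard relative $\Spa$ theory for finite morphisms. The top functor is an equivalence by (1), since the monoidal structure is preserved. Therefore $(-)^\circ$ restricted to finite morphisms is also an equivalence.
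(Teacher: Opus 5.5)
Your proposal is correct and follows essentially the same route as the paper. You reduce to affinoids via \cref{eq:affinoid_circ}, observe that rational subsets and their section rings are unchanged when $R^+$ is enlarged (giving $\cO_X\cong j_*\cO_{X^\circ}$), deduce (1) from the equivalence between coherent sheaves on affinoids and finite $R$-modules, and deduce (2) from (1) together with relative $\Spa$ for finite morphisms. Your explicit check that integral closure commutes with localization at $K^+\cap(K^\circ)^\times$ is a detail the paper leaves implicit when it says that (2) is a consequence of (1).
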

\begin{proof}
    The morphism $j$ is quasi-compact by base change. It is a topological embedding as it is the inclusion of the $\SKKc$-fiber. Since every morphism of analytic adic spaces maps rank one points to rank one points, $X^\circ$ contains all rank one points of $X$ (cf.\ \cite[Lem.\ 1.1.10] {Huber1996EtaleCohomology}). Therefore, every element of $X$ is a specialization of an element of $X^\circ$.
    
    (1) follows as in the proof of \cite[Lem.\ 3.1]{Mann2023NormalIrreducibleAdicSpaces}. Indeed, the isomorphism $\cO_X \to j_*\cO_{X^\circ}$ is immediate from \cref{eq:affinoid_circ}, since it suffices to consider sections of the sheaves on affinoid opens. Recall that the category of coherent sheaves on an affinoid adic space $\Spa(A,A^+)$ is equivalent to the category of finite $A$-modules, see \cite[Satz.\ 3.3.18]{Huber1993Bewertungsspektrum}. Hence the isomorphism $\cO_X \cong j_*\cO_{X^\circ}$ together with the fact that the preimage under $j$ of an affinoid open is affinoid implies that $j_*\colon \mathrm{Coh}(X^\circ)\to \mathrm{Coh}(X)$ is an equivalence. (2) is a consequence of (1), see \cite[Satz.\ 3.12.12]{Huber1993Bewertungsspektrum} for the vertical arrows.
\end{proof}

The following lemma records a useful topological relation between an adic space over $\SKKp$ and its rank one fiber.

\begin{lemma}\label{l:dense_cocont}
    Let $j\colon X\to X'$ be a quasi-compact injective morphism of
    adic spaces such that every point of $X'$ is a specialization of a point of $X$. Suppose that $X=\bigcup U_i$ is an open cover and set $U_i^\mathrm{sat}:=\mathrm{int}((X'\setminus X) \cup U_i)$. Then $X'=\bigcup U_i^\mathrm{sat}$ and $U_i^\mathrm{sat}\cap X = U_i$.
\end{lemma}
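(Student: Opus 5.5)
We identify $X$ with its image $j(X)\subseteq X'$ via the injective map $j$. One caveat: this identification needs $j$ to be a topological embedding, as it is in the intended application $X=X^\circ$, $X'=X$ of \cref{l:circ_embedding}. Under that identification each $U_i$ is open in the subspace topology, and the goal is to prove two statements:
\[
X'=\bigcup_i U_i^{\mathrm{sat}},\qquad U_i^{\mathrm{sat}}\cap X=U_i .
\]

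\emph{The second identity.} We have $U_i^{\mathrm{sat}}\subseteq (X'\setminus X)\cup U_i$, so $U_i^{\mathrm{sat}}\cap X\subseteq U_i$.

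For the reverse inclusion, fix $x\in U_i$. Since $U_i$ is open in $X$, write $U_i=X\cap W$ with $W\subseteq X'$ open. Then
\[
W\subseteq (X'\setminus X)\cup (W\cap X)=(X'\setminus X)\cup U_i ,
\]
so $W\subseteq U_i^{\mathrm{sat}}$. As $x\in W$, we get $x\in U_i^{\mathrm{sat}}$.

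\emph{The first identity.} Let $y\in X'$ and suppose $y\notin\bigcup_i U_i^{\mathrm{sat}}$. By definition of the interior, for every $i$ each open neighbourhood of $y$ meets $X\setminus U_i$.

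We use quasi-compactness of $j$ and spectrality. Choose a quasi-compact open $V\ni y$; then $V\cap X$ is quasi-compact. We may refine the cover by quasi-compact opens, since saturation is monotone in $U_i$. Finitely many $U_1,\dots,U_n$ then cover $V\cap X$.

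Next consider the closed sets $Z_k=X\setminus U_k$. Each is closed in $X$, hence pro-constructible in the spectral space $X$. Since $j$ is quasi-compact and spectral, the image $j(Z_k)$ is pro-constructible in $X'$. The closure of a pro-constructible set is exactly the set of specializations of its points. The hypothesis on $y$ gives
\[
y\in\overline{j(Z_k\cap V)}\quad\text{for each } k .
\]
Hence for each $k$ there is a point $z_k\in Z_k\cap V$ with $z_k\rightsquigarrow y$ (i.e.\ $y$ is a specialization of $z_k$).

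\emph{Main obstacle.} The step I expect to be hardest is producing a \emph{single} point of $X$ that lies in all the $Z_k$ simultaneously, which is what yields the contradiction.

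In analytic adic spaces, the generizations of a point form a chain. So the points $z_1,\dots,z_n$ are totally ordered, and we may take the most general one, say $z$ (the one with all others as specializations).

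The key claim is that $z\in Z_k$ for every $k$. Indeed, suppose $z\in U_k$ for some $k$. Then $U_k=X\cap W$ is open and contains $z$, so it contains every specialization of $z$ lying in $X$. That includes $z_k$, contradicting $z_k\in Z_k$.

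Thus $z\in\bigcap_k Z_k\cap V$. But $z\in V\cap X$, which is covered by $U_1,\dots,U_n$, so $z$ lies in some $U_k$ — a contradiction.

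The difficulty is making this specialization argument rigorous. It needs the density hypothesis, namely that every point of $X'$ specializes from a point of $X$, to guarantee $V\cap X\neq\emptyset$. It also needs the closure-equals-specializations fact for pro-constructible images under spectral maps.
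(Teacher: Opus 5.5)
There is a genuine gap at the decisive step. You choose $z$ as the \emph{most general} of $z_1,\dots,z_n$. You then claim that an open $U_k\ni z$ contains every specialization of $z$ lying in $X$. This is backwards: open subsets of an adic space are stable under \emph{generalization}, not specialization; closed sets are the ones stable under specialization. For instance, if $K^+$ has rank $2$, then $\{v_{K^\circ}\}\subseteq\SKKp$ is open but does not contain its specialization $v_{K^+}$. This is exactly the situation $X^\circ\subseteq X$ the lemma is built for. With $z$ most general, having $z\in U_k$ and $z_k\notin U_k$ is perfectly consistent, so no contradiction arises.

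The repair is to take $z$ to be the \emph{most special} of the finitely many $z_k$. A minimum exists because they lie in the chain of generalizations of $y$. Then every $z_k$ is a generalization of $z$, so $z\in U_k$ would force $z_k\in U_k$, contradicting $z_k\in Z_k$. Hence $z\in (V\cap X)\setminus(U_1\cup\dots\cup U_n)$, which is impossible.

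With this change your argument is correct and is essentially the paper's. The paper takes $x$ to be a closed point of the nonempty quasi-compact set $\varphi^{-1}(V\cap X)\subseteq\Spa(\kappa(x'),\kappa(x')^+)$, i.e.\ the most special generalization of $x'$ lying in $X$. It then derives the same contradiction from a point $y\in X\setminus U_l$ specializing to $x'$. That route gets the extremal point directly from quasi-compactness. Your refinement to finitely many quasi-compact $U_k$ is a legitimate alternative way to obtain it.

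Second, the topological-embedding property should not be left as a caveat. The statement does not assume it, and you use it in two places. The second identity needs it. The corrected first part also needs it, to write $U_k=X\cap W$ with $W$ open in $X'$ and so pass generalizations taken in $X'$ back into $U_k$. The paper supplies it by citing \cite[Lem.\ 1.3.15]{Huber1996EtaleCohomology}: an injective quasi-compact morphism of analytic adic spaces is a homeomorphism onto its image. You should invoke this at the outset rather than restricting to the intended application.
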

\begin{proof}
    Note that $j$ is a homeomorphism onto its image since it is an injective quasi-compact map between analytic adic spaces, see \cite[Lem.\ 1.3.15]{Huber1996EtaleCohomology}. One verifies that $U_i^\mathrm{sat}= X'\setminus\overline{X\setminus U_i}$ and $U_i^\mathrm{sat}\cap X = U_i$.
    
    Now, suppose toward a contradiction that there exists a point $x' \in X' \setminus \bigcup U_i^\mathrm{sat} = \bigcap \overline{X \setminus U_i}$. Let $V\subseteq X'$ be an affinoid neighborhood of $x'$. Then $\operatorname{Spa} (\kappa(x'), \kappa(x')^+)\xrightarrow{\varphi}V\to X'$ factors through $V$, as it is a homeomorphism onto the set of generalizations of $x'$. Therefore $\varphi^{-1}(V\cap X)$ is quasi-compact as $X\to X'$ is quasi-compact.
    It is also nonempty and thus contains a closed point $x$, which is a minimal generalization of $x'$ in $X$.
    
    Let $l$ be such that $x \in U_l$. As $j$ is spectral and quasi-compact, $j(X \setminus U_l) \subseteq X'$ is pro-constructible (cf.\ \cite[(1.1.13)]{Huber1996EtaleCohomology}). 
     Hence the closure of $X \setminus U_l$ in $X'$ consists of all specializations of points in $X \setminus U_l$ (see \spcite{0903} for the spectral case; the general case follows from this). Therefore there exists $y \in X \setminus U_l$ such that $y \succ x'$. As generalizations are totally ordered and $x \in U_l$, it must be that $x \succ y$, contradicting the minimality of $x$.
\end{proof}

\begin{remark}\label{r:cohomology_comparison}
    Let $j\colon X\subseteq X'$ be as in \cref{l:dense_cocont}. Then it follows from \cref{l:dense_cocont} that $j_*\colon \mathrm{Sh}(X_{\mathrm{an}})\to \mathrm{Sh}(X'_{\mathrm{an}})$ preserves epimorphisms.
    In particular, one obtains $H^i(X_{\mathrm{an}}, \cF)=H^i(X'_{\mathrm{an}}, j_*\cF)$ for $i\geq 1$ and abelian sheaves $\cF$. When $\cF$ is a sheaf of (non-commutative) groups, then one similarly verifies the equality for $i=1$.
    
    Therefore, if $X\to \SKKp$ is an adic space
    and $G\to X$ is a partially proper group space over $X$, then $((-)^\circ)_*G^\circ=G$ by \cref{p:valuative_strong_criterion}, and hence \begin{equation*}
        H^i(X^\circ_{\mathrm{an}}, G^\circ)=H^i(X_{\mathrm{an}}, G).
    \end{equation*}
\end{remark}

\begin{lemma}\label{l:circ_reflect_pp}
	Let $f\colon X\to Y$ be a partially proper morphism of adic spaces over $\SKKp$. Then $f$ is finite (a closed immersion, proper, surjective, an epimorphism of sheaves on the analytic site, respectively) if and only if $f^\circ$ has the respective property.
\end{lemma}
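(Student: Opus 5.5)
The forward direction is formal in every case: finiteness, closed immersions, properness, surjectivity and being an epimorphism of analytic sheaves are all stable under base change along $Y^\circ\to Y$. Base change along a morphism locally of weakly topologically finite type is allowed here. For surjectivity one uses that $X^\circ = X\times_Y Y^\circ$ and surjectivity is stable under base change of adic spaces. For epimorphisms, pullback of sheaves along $Y^\circ\to Y$ is exact. So the work is in the converse.

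For the converse I would argue property by property.

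\emph{Surjectivity.} Every point $y\in Y$ is a specialization of a point $y^\circ\in Y^\circ$ by \cref{l:circ_embedding}. By assumption $y^\circ=f(x^\circ)$ for some $x^\circ$. Since $f$ is partially proper, hence universally specializing, the specialization $y^\circ\rightsquigarrow y$ lifts to a specialization $x^\circ\rightsquigarrow x$ with $f(x)=y$.

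\emph{Properness.} $f$ is partially proper, so only quasi-compactness is needed. Working locally on $Y$ we may take $Y$ affinoid. Then $Y^\circ\subseteq Y$ is quasi-compact and $X^\circ$ is quasi-compact. Cover $X$ by affinoids $U_i$ and take finitely many $U_i^\circ$ covering $X^\circ$. \Cref{l:dense_cocont} applied to $X^\circ\to X$ is not directly enough, so I would use instead that partial properness forces every point of $X$ to be a specialization of a point of $X^\circ$ (again \cref{l:circ_embedding}). The approach is then to compare $X$ with the partially proper hull. By \cref{p:valuative_strong_criterion}, $X(\Spa(R,R^+))=X(\Spa(R,R^\circ))$, and points of $X$ over $Y$ correspond to rank-one data; I expect properness to reduce to the universal closedness already known for $X^\circ$. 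This step is the main obstacle. An alternative is to realize $X$ as $X^\circ$ plus specializations and use that the finitely many quasi-compact opens $\overline{U_i}$ (closures of the $U_i^\circ$ inside $X$) are quasi-compact; since $X$ is generated by specializations, these cover $X$.

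\emph{Finiteness and closed immersions.} Once $f$ is known to be proper, I would show that $f$ finite is equivalent to $f$ proper with finite fibers, or more concretely that $f_*\mathcal O_X$ is coherent. Coherence can be checked via $j_*$ and \cref{l:circ_embedding}(1): $f_*\mathcal O_X = f_*j_*\mathcal O_{X^\circ}=j_*f^\circ_*\mathcal O_{X^\circ}$, which is coherent. Then $\underline{\Spa}_Y(f_*\mathcal O_X)$ is finite over $Y$ with $\circ$-fiber $X^\circ$. By full faithfulness of $(-)^\circ$ on partially proper spaces (\cref{r:circ_fully_faithful}), it is isomorphic to $X$. The closed-immersion case is the same argument with a surjection $\mathcal O_Y\to f_*\mathcal O_X$, which can be checked after $j^{-1}$ by the equivalence in \cref{l:circ_embedding}(1).

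\emph{Epimorphisms.} Given a local section $s$ of $Y$ over an open $W$, its restriction to $W^\circ$ lifts locally on $W^\circ$ to sections of $X^\circ$. By \cref{l:dense_cocont}, a cover $\{U_i\}$ of $W^\circ$ extends to a cover $\{U_i^{\mathrm{sat}}\}$ of $W$ with $U_i^{\mathrm{sat}}\cap W^\circ=U_i$. By \cref{p:valuative_strong_criterion}, a lift $U_i\to X^\circ$ over $s$ corresponds to a unique lift $U_i^{\mathrm{sat}}\to X$, because $(U_i^{\mathrm{sat}})^\circ=U_i$ and $X\to Y$ is partially proper. This shows $f$ is an epimorphism.
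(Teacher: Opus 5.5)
Your arguments for surjectivity, finiteness, closed immersions and epimorphisms are correct and follow the paper's arguments, with more detail. The finiteness argument via $f_*\cO_X=j_*f^\circ_*\cO_{X^\circ}$, \cref{l:circ_embedding}(2) and full faithfulness of $(-)^\circ$ on partially proper $Y$-spaces never uses properness of $f$. So it, and the closed-immersion case built on it, is unaffected by the problem below.

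The gap is the properness step, which you flag yourself as the main obstacle and do not resolve. After reducing to $Y$ affinoid you correctly have that $X^\circ$ is quasi-compact and that $X=\overline{X^\circ}$. (Every point of $X$ specializes from $X^\circ$ for any $X$ over $\SKKp$, not because of partial properness.) What remains is to show this closure is quasi-compact, and that is the one non-formal point. You simply assert that the closures $\overline{U_i^\circ}$ are quasi-compact, and you call them opens although they are closed. For adic spaces the closure of a quasi-compact subset need not be quasi-compact; this is exactly Huber's tautness condition. Partial properness of $f$ over a quasi-compact quasi-separated base is what guarantees it, via \cite[Lem.\ 1.3.13]{Huber1996EtaleCohomology}, which the paper cites to conclude that $X=\overline{X^\circ}$ is quasi-compact. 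Your other suggestion, deducing properness from universal closedness of $f^\circ$, does not give quasi-compactness of $X$ either.

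To close the gap, either invoke that lemma or prove the needed case with \cref{l:valuative criterion}:
\begin{itemize}
\item For an affinoid open $U=\Spa(A,A^+)\subseteq X$ over $Y=\Spa(B,B^+)$, let $A'$ be the integral closure of $\mathrm{im}(B^+)+A^{\circ\circ}$ in $A$.
\item The restriction $\Spa(A,A^\circ)\to X$ extends uniquely to a $Y$-morphism $\Spa(A,A')\to X$.
\item The image of this morphism is quasi-compact. It contains every point $x\in X$ specializing a point $u\in U$: the map $\Spa(\kappa(x),\kappa(x)^+)\to X$ factors through $\Spa(A,A')$, by uniqueness in the valuative criterion, since $A'$ maps into $\kappa(x)^+$.
\end{itemize}
Applying this to finitely many affinoids covering $X^\circ$ shows that $X$ is quasi-compact, hence $f$ is proper.
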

\begin{proof}
    The functor $(-)^\circ$ preserves these properties since it is a base change.  The converse statement for \textit{finite} and \textit{closed immersion} is a consequence of 
{\cref{p:valuative_strong_criterion,l:circ_embedding}}. For \textit{proper} we may assume that $Y$ is quasi-compact and quasi-separated. Then \cite[Lem.\ 1.3.13]{Huber1996EtaleCohomology} says that since $f$ is partially proper, the closure $\overline{X^\circ}=X$ is quasi-compact. Hence $f$ is quasi-compact and therefore proper. \textit{Surjectivity} follows readily, as a partially proper map is specializing.
For the last property, suppose that $f^\circ$ is an epimorphism of sheaves on the analytic site, i.e., $f^\circ$ admits sections locally on $Y^\circ$. A cover of $Y^\circ$ can be extended to a cover of $Y$ in the sense of \cref{l:dense_cocont}. Then, \cref{p:valuative_strong_criterion} shows that the sections over $\SKKc$ lift to sections over $\SKKp$.
\end{proof}

\begin{lemma}\label{l:circ_reflects_tft_smooth}
    Let $f\colon X\to Y$ be a morphism that is locally of ${}^+\!$weakly topologically finite type between
    adic spaces over $\SKKp$.
    \begin{enumerate}
        \item $f$ is locally of topologically finite type (flat, étale, smooth, respectively) if and only if $f^\circ$ has the respective property.
        \item If $X$ is separated, then $f$ is an open immersion if and only if $f^\circ$ is.
    \end{enumerate}
\end{lemma}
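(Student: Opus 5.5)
The plan is to reduce everything to local, affinoid statements and to use that $(-)^\circ$ only enlarges the $+$-ring, so rings and topologies do not change. The "only if" directions are immediate in every case: all listed properties are stable under base change along $\SKKc\to\SKKp$. This base change exists and is locally of weakly topologically finite type. For the converses I work locally on $X$ and $Y$. I choose affinoid opens $V=\Spa(B,B^+)\subseteq Y$ and $U=\Spa(A,A^+)\subseteq f^{-1}(V)$. By \cref{eq:affinoid_circ}, $U^\circ=\Spa(A,A^+[K^\circ])$ and $V^\circ=\Spa(B,B^+[K^\circ])$, with the same Tate rings $A$ and $B$. By \cref{l:dense_cocont}, every open cover of $X^\circ$ by affinoids is the restriction of an open cover of $X$. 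So I may choose coverings compatibly and check each property on the affinoid pieces.

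For (1), \emph{topologically finite type}: $f$ is already locally of ${}^+$weakly topologically finite type, so $A$ is topologically of finite type over $B$. The remaining condition concerns $A^+$, namely that it is the integral closure of the image of $B^+$ plus finitely many elements. I would compare this with $A^+[K^\circ]$ being the integral closure of $B^+[K^\circ]$. Here I expect to use the partial properness/valuative description: points of $U$ are specializations of points of $U^\circ$ (\cref{l:circ_embedding}). Let $A'$ be the integral closure of $B^+$ together with the power-bounded generators. Then $\Spa(A,A')$ and $\Spa(A,A^+)$ have the same rank one points over $\SKKp$ fibers. Comparing the two requires an argument that $A^+$ is determined by $A^+[K^\circ]$ together with $B^+$. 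I expect this to be the main obstacle. My first attempt would be to write $A^+=\{a\in A: |a(x)|\le1 \ \forall x\in U\}$ and to check the condition at higher rank points $x$ via their rank one generalization and the image valuation on $B$.

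\emph{Flatness}: this is a property of the ring maps $\cO_{Y,f(x)}\to\cO_{X,x}$, or locally of $B\to A$. Since $\cO_X=j_*\cO_{X^\circ}$ (\cref{l:circ_embedding}), it can be read off from the unchanged rings $A$ and $B$, so it transfers. \emph{Étale and smooth}: once $f$ is locally of topologically finite type, Huber's characterization reduces both to conditions on $B\to A$. Étaleness means flat with vanishing $\Omega_{A/B}$ and locally a rational subset of a finite étale cover; smoothness means locally étale over a relative ball. Concretely, I would use the Jacobian criterion on a presentation $A=B\langle T_1,\dots,T_n\rangle/I$. This criterion only involves $A$ and $B$, so it gives the equivalence.

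For (2): an open immersion is étale and injective on points with trivial residue field extensions. If $f^\circ$ is an open immersion, then by (1) $f$ is étale, hence open. Separatedness of $X$ then lets me apply \cref{p:valuative_strong_criterion}, for example to the diagonal $X\to X\times_Y X$, to show that $f$ is a monomorphism. More concretely, $f$ is injective: two points of $X$ with the same image have rank one generalizations in $X^\circ$. These generalizations coincide because $f^\circ$ is injective, and separatedness (uniqueness of specializations along $\Spa(\kappa,\kappa^+)$) then forces the original points to agree. An étale monomorphism is an open immersion, which concludes the argument.
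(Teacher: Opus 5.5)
The "only if" directions and your argument for (2) (étale by (1), monomorphism from \cref{p:valuative_strong_criterion} and separatedness, then an étale monomorphism is an open immersion) are essentially the paper's argument. There is, however, a genuine gap in the topologically-finite-type case of (1), where you have mislocated the difficulty. The condition you call "remaining" is already supplied by the ${}^+$weak hypothesis on $f$: it gives a finite $E=\{r_1,\dots,r_m\}\subseteq A^+$ with $A^+$ the integral closure of $B^+[A^{\circ\circ}\cup E]$. What is actually missing is a quotient map $(B,B^+)\langle T_1,\dots,T_n\rangle\to(A,A^+)$, i.e.\ topological generators of $A$ over $B$ that lie in $A^+$.

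Topological finite type of $f^\circ$ only gives generators $a_i\in A^+[K^\circ]$. No pointwise check at higher rank points can put them into $A^+$, because in general they are not there. Take $Y=\SKKp$, $X=\dB_Y$, and $e\in K^+\cap(K^\circ)^\times$ not a unit of $K^+$. Then $e^{-1}T$ topologically generates $K\langle T\rangle$ and has absolute value $\le 1$ at all rank one points, but $e^{-1}T\notin A^+$: its image under the $K^+$-point $T=1$ is $e^{-1}\notin K^+$. So the equality $A'=A^+$ you aim for is false for a bad choice of generators.

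The missing idea is to rescale. Since $A^+[K^\circ]$ is the filtered union of the $A^+[e^{-1}]$ over $e\in K^+\cap(K^\circ)^\times$, one $e_0$ gives $e_0a_i\in A^+$ for all $i$. Moreover, $T_i\mapsto e_0^{-1}T_i'$ is an automorphism of $B\langle T\rangle$ because $e_0$ is a unit of $K^\circ$. Hence $(B,B^+)\langle T_1',\dots,T_n',Y_1,\dots,Y_m\rangle\to(A,A^+)$, $T_i'\mapsto e_0a_i$, $Y_j\mapsto r_j$, is a quotient map, which is the tft condition. Your Jacobian-criterion argument for smoothness presupposes a presentation compatible with the $+$-rings, so it inherits this gap. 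The paper avoids any Jacobian criterion: it lifts the local étale charts $U_i\to\dA^{n_i}_{Y^\circ}$ to $U_i^{\mathrm{sat}}\to\dA^{n_i}_Y$ via \cref{p:valuative_strong_criterion} and then applies the étale case.

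Your flatness argument also does not work as stated, because it conflates sections with stalks. Flatness of $f$ means flatness of $\cO_{Y,f(x)}\to\cO_{X,x}$ for every $x\in X$, including higher rank $x\notin X^\circ$. The traces on $X^\circ$ of neighbourhoods of $x$ form a non-cofinal family of neighbourhoods of the rank one generalization $x'$. So $\cO_X=j_*\cO_{X^\circ}$ does not identify $\cO_{X,x}$ with any stalk of $\cO_{X^\circ}$. The fact that the affinoid rings are unchanged therefore decides nothing at $x$, unless you separately prove that flatness of $f$ is equivalent to flatness of the affinoid ring maps. The needed input is the comparison of $x$ with $x'$. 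By Huber's Lemma 1.1.10(iii), the generization maps $\cO_{X,x}\to\cO_{X,x'}$ and $\cO_{Y,f(x)}\to\cO_{Y,f(x')}$ are local and flat, hence faithfully flat. Flatness of $\cO_{Y,f(x)}\to\cO_{X,x}$ then descends (Stacks Tag 039V) from flatness of the composite $\cO_{Y,f(x)}\to\cO_{Y,f(x')}\to\cO_{X,x'}$, which holds because $f^\circ$ is flat at $x'$.
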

\begin{proof}
    The morphism $f^\circ$ inherits these properties by base change. We show that $(-)^\circ$ reflects them as well.
    
    (1) For {\textit{flatness}}, assume that $\cO_{Y,f(z)}\to \cO_{X,z}$ is flat for all rank one points $z\in X$. Let $x\in X$ be arbitrary and let $x'$ be its rank one generalization. Then $f(x')$ is the rank one generalization of $f(x)$ and we obtain a commutative square
    % https://q.uiver.app/#q=WzAsNCxbMCwwLCJcXGNPX3tZLFxcLCBmKHgpfSJdLFswLDEsIlxcY09fe1ksIFxcLGYoeCcpfSJdLFsxLDAsIlxcY09fe1gsXFwsIHh9Il0sWzEsMSwiXFxjT197WCxcXCwgeCd9Il0sWzAsMl0sWzEsM10sWzAsMV0sWzIsM11d
\[\begin{tikzcd}[row sep=small]
	{\cO_{Y,\, f(x)}} & {\hphantom{.}\cO_{X,\, x}\hphantom{.}} \\
	{\cO_{Y, \,f(x')}} & {\hphantom{.}\cO_{X,\, x'}.}
	\arrow[from=1-1, to=1-2]
	\arrow[from=1-1, to=2-1]
	\arrow[from=1-2, to=2-2]
	\arrow[from=2-1, to=2-2]
\end{tikzcd}\]
    By \cite[Lem.\ 1.1.10(iii)]{Huber1996EtaleCohomology} the vertical
    maps are local and flat (and hence faithfully flat). The lower map is local and flat by assumption. The flatness of the upper map now follows from \spcite{039V}.
    
We now treat the case where $f^\circ$ is \textit{locally of topologically finite type}.
    By \cref{eq:affinoid_circ}, the claim boils down  to showing that if $f\colon X=\Spa(B, B^+)\to \Spa(A, A^+)=Y$ is such that there exists a quotient map 
    \begin{equation*}
        (A, A^+[K^\circ])\langle X_1,\dots, X_n\rangle\to (B, B^+[K^\circ]),\ X_i\mapsto b_i\in B^+[K^\circ]
    \end{equation*}
    and a finite subset $E = \{r_1,\dots, r_m\} \subseteq B^+$ with $B^+ = \intcl{A^+[B^{\circ\circ} \cup E]}$, then $f$ is of topologically finite type.
    Since $K^+$ is a valuation ring, $K^\circ$ is the filtered union of subrings $K^+[e^{-1}]$ ranging over $e\in K^+\cap (K^\circ)^\times$. Hence $B^+[K^\circ]$ is the filtered union of such $B^+[e^{-1}]$. Therefore, we may choose $e_0\in K^+\cap (K^\circ)^\times$ such that $e_0 b_i \in B^+$ for all $i = 1,\dots, n$. The morphism above can then be factored at the level of topological rings as follows:
    % https://q.uiver.app/#q=WzAsMyxbMCwwLCJBXFxsYW5nbGUgWF8xXFxiaXMgWF9uXFxyYW5nbGUiXSxbMSwxLCJBXFxsYW5nbGUgWF8xJ1xcYmlzIFhfbidcXHJhbmdsZSJdLFsyLDAsIkIiXSxbMCwxLCJYX2lcXCxcXG1hcHN0b1xcLCBlXnstTn1YX2knIiwyXSxbMCwyLCJYX2lcXCwgXFxtYXBzdG9cXCwgIGJfaSJdLFsxLDIsIlhfaSdcXCxcXG1hcHN0b1xcLCBlXntOfWJfaSIsMl1d
\[\begin{tikzcd}[column sep=3.15em,row sep=tiny]
	{A\langle X_1,\dots, X_n\rangle} && {B.} \\
	& {A\langle X_1',\dots, X_n'\rangle}
	\arrow["{X_i\, \mapsto\,  b_i}", from=1-1, to=1-3]
	\arrow["{X_i\,\mapsto\, e_0^{-1} X_i'}"', from=1-1, to=2-2]
	\arrow["{X_i'\,\mapsto\, e_0 b_i}"', from=2-2, to=1-3]
\end{tikzcd}\]
    The left diagonal map is an isomorphism since $e_0\in (K^\circ)^\times$, so the right map is necessarily a quotient morphism of Huber rings. This implies that the induced map
    \begin{equation*}
        (A, A^+)\langle X_1,\dots, X_n, Y_1,\dots, Y_m\rangle\to (B, B^+),\ X_i \mapsto e_0 b_i,\ Y_j\mapsto r_j,
    \end{equation*}
    is a quotient map.
	
    With this, the \textit{étale} case is a consequence of \cite[Prop.\ 1.7.5]{Huber1996EtaleCohomology}, which implies that the property can be verified on rank 1 points if the morphism is locally of topologically finite type.
    
    For \textit{smoothness}, assume that $X^\circ \to Y^\circ$ is smooth. We may assume that $Y$ is affinoid. By \cite[Cor.\ 1.6.10]{Huber1996EtaleCohomology} there exists an open cover $X^\circ=\bigcup U_i$ and étale morphisms $g_i$ such that the following diagrams commute
% https://q.uiver.app/#q=WzAsNCxbMCwwLCJVX2kiXSxbMSwxLCJcXGRCX3tZXlxcY2lyY31ee25faX0iXSxbMiwxLCJcXGRBX3tZXlxcY2lyY31ee25faX0iXSxbMCwyLCJZXlxcY2lyYyJdLFswLDNdLFsxLDMsIlxcbWF0aHJte25hdH0iLDJdLFswLDEsImdfaSIsMl0sWzAsMiwialxcY2lyYyBnX2kiXSxbMiwzLCJcXG1hdGhybXtuYXR9Il0sWzEsMiwiaiIsMV1d
\[\begin{tikzcd}[column sep=large,row sep=small]
	{U_i} \\
	& {\dB_{Y^\circ}^{n_i}} & {\dA_{Y^\circ}^{n_i}.} \\
	{Y^\circ}
	\arrow["{g_i}"', from=1-1, to=2-2]
	\arrow[from=1-1, to=2-3]
	\arrow[from=1-1, to=3-1]
	\arrow["{\mathrm{can}}"{description}, from=2-2, to=2-3]
	\arrow["{\mathrm{can}}"', from=2-2, to=3-1]
	\arrow["{\mathrm{can}}", from=2-3, to=3-1]
\end{tikzcd}\]
    Let $U_i^\mathrm{sat}$ be as in \cref{l:dense_cocont} for the inclusion $X^\circ\subseteq{} X$. Since the morphisms $\dA^{n_i}_Y\to Y$ are partially proper, there exist unique lifts $h_i\colon U_i^\mathrm{sat}\to \dA^{n_i}_Y$ over $Y$ by \cref{p:valuative_strong_criterion}. Since $f$ is locally of topologically finite type by the previous case, the $h_i$ are étale by the previous case. Since the $U_i^\mathrm{sat}$ cover $X$, this concludes the proof.

    (2) Suppose that $f^\circ$ is an open immersion. By the étale case, $X\to Y$ is étale. In particular, by replacing $Y$ by the (open) image of $X$, we may assume that $X\to Y$ is surjective.  It is a monomorphism. Indeed, since $X^\circ\to Y^\circ$ is an isomorphism and $X$ is separated, this follows from \cref{p:valuative_strong_criterion}. Therefore the pullback of $X\to Y$ along itself is an isomorphism. Since $X\to Y$ is a covering in the étale topology, the claim follows from \cite[(2.2.2)]{Huber1996EtaleCohomology}.
\end{proof}

\subsection{\texorpdfstring{Extending spaces to $K^+$}
    {Extending spaces to K⁺}}
We compare admissible formal schemes over $\SKc$ to those over $\SKp$. This will be important for constructing {suitable} Néron models of abeloid varieties over $\SKKp$. Moreover, we use it 
to extend the uniformization cross of the $\SKKc$-fiber of an abeloid variety.

We write $\mathrm{Adm}/\SKp$ for the category of admissible formal schemes over $K^+$, and $\mathrm{Sch}_{\mathrm{flat},\mathrm{lfp}}/\hspace{-0.2mm}S$ for the category of schemes which are flat and locally of finite presentation over $S$.

\begin{proposition}\label{p:pushout_equivalence}
    Pullback defines an equivalence of categories
    \begin{equation*}
    (\mathrm{Adm}/\SKp)\isomarrow (\mathrm{Adm}/\SKc)\times_{(\mathrm{Sch}_{\mathrm{flat},\mathrm{lfp}}/\Spec(k))} (\mathrm{Sch}_{\mathrm{flat},\mathrm{lfp}}/\Spec(K^+/K^{\circ\circ})).
    \end{equation*}
    Moreover, it is compatible with 
    \begin{enumerate}[label=(\arabic*)]
    \item restricting to the following properties of morphisms on either side: étale, smooth, proper, finite, flat. 
    \item considering group spaces in all components.
    \end{enumerate}    
\end{proposition}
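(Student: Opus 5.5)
The plan is to reduce the statement to Milnor--Ferrand patching over a cartesian square of rings, applied level by level, and then pass to the limit. Write $V:=K^+/K^{\circ\circ}$, a valuation ring of $k$. By the bijection between valuation rings $R\subseteq K^\circ$ and valuation rings of $k$ recalled in \S2, $K^+$ is exactly the preimage of $V$ under $K^\circ\to k$. Hence
\[
K^+\cong K^\circ\times_k V ,
\]
and this is a Milnor square in which $K^\circ\to k$ is surjective.

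First I reduce to adic truncations. For $n\ge 1$ the ideal $I_n:=\varpi^nK^\circ$ lies in $K^{\circ\circ}\subseteq K^+$. The chain $\varpi^{n+1}K^\circ\subseteq \varpi^nK^+\subseteq \varpi^nK^\circ$ shows that the $I_n$ define the $\varpi$-adic topology on $K^+$. Quotienting the Milnor square by $I_n$ gives cartesian squares
\[
K^+/I_n\cong (K^\circ/\varpi^n)\times_k V,
\]
again with surjective map $K^\circ/\varpi^n\to k$. An admissible formal scheme over $\SKp$ is the same as a compatible system $(X_n)_n$ of schemes flat and locally of finite presentation over $K^+/I_n$, subject to the usual adic compatibility conditions. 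The same holds over $\SKc$ with the levels $K^\circ/\varpi^n$. Flatness over $K^+$ means $\varpi$-torsion freeness, which I would check via the local flatness criterion on the truncations. So it suffices to prove, uniformly in $n$ and compatibly with the transition maps, that pullback gives an equivalence between flat lfp schemes over $K^+/I_n$ and the fibre product of the corresponding categories over $K^\circ/\varpi^n$ and over $V$, glued over $k$.

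For a fixed $n$ I apply Ferrand pinching. Flat lfp objects over a fibre product $A\times_C B$ with $A\to C$ surjective are equivalent to triples consisting of flat lfp objects over $A$ and $B$ together with an isomorphism over $C$. This is Milnor patching for modules, \spcite{08KG}, and its extension to algebras and schemes via pushouts of schemes along a closed immersion and an affine morphism, see the pinching results in the Stacks project around \spcite{0D2G} and \spcite{0ECH}. Concretely, given $(X',Y,\varphi)$, I form the pushout $X'\sqcup_{X'_k}Y$. The key points are that this pushout is flat and locally of finite presentation over $K^+/I_n$ and that its pullbacks recover $X'$ and $Y$. Full faithfulness follows from the cartesian square of structure sheaves $\cO_X=\cO_{X'}\times_{\cO_{X_k}}\cO_Y$. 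Functoriality of the pushout then makes the levelwise equivalences compatible in $n$, and passing to the limit yields the equivalence of the statement.

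For (1) I use that each listed property is detected on truncations. Étale, smooth and flat morphisms of admissible formal schemes can be checked on all truncations, and by Milnor patching a morphism over $A\times_C B$ has such a property if and only if its pullbacks to $A$ and to $B$ do (compare \spcite{08KG}). For proper and finite morphisms I use the reduction criterion from \S2 (\cite[Prop.\ I.4.7.3]{FujiwaraKato2018}). Properness over $K^+/I_1$ is checked after pulling back to its two components, since a pushout of proper schemes along a closed immersion stays proper and the topological space is the pushout of the two spaces. Statement (2) is formal: the equivalence preserves finite products, because fibre products are computed componentwise and are preserved by pinching, so group objects correspond to group objects.

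I expect the main obstacle to be the pinching step itself. One must show that the pushout of a non-affine flat lfp scheme is again a scheme that is flat and of finite presentation, and that everything works along the non-noetherian surjection $K^\circ/\varpi^n\to k$, whose kernel is not nilpotent. My first attempt is to work affine-locally on $X'$: choose affine opens that are compatible with affine opens of $Y$ via $\varphi$, which should be possible because $X'_k\to X'$ is a homeomorphism onto a closed subset. Then I would invoke the algebra version of patching for flat finitely presented algebras over $A\times_C B$ and glue.
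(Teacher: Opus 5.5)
Your outline matches the paper's: the level-$n$ squares $K^+/\varpi^nK^\circ\cong(K^\circ/\varpi^n)\times_kV$, a pinching equivalence for flat lfp schemes at each level, and a $2$-limit over $n$. The gap is the pinching step itself, which you flag but leave open.

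Milnor--Ferrand patching handles flat modules. The principle you state for flat lfp \emph{schemes} over $A\times_CB$, with only $A\to C$ surjective, is not a result you can quote as it stands, because finite presentation of the glued object is not formal. For instance, put $J=\ker(A\to C)$ and $R'=R_A\times_{R_C}R_B$. Lifting generators of $R_B$ to elements $(\beta_i,b_i)\in R'$ only gives $R_A=A[\beta]+JR_A$. You need a Nakayama-type argument for $J$ to conclude $R_A=A[\beta]$, and hence $R'=A'[(\beta_i,b_i)]$ (using $JR_A=J[\beta]\subseteq R'$). Ferrand-type pinching theorems for schemes of finite type obtain this from finiteness or integrality of $\Spec(C)\to\Spec(B)$. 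That fails here, since $\Spec(k)\to\Spec(V)$ is the inclusion of the generic point. Your non-affine gluing likewise uses only that $X'_k\to X'$ is a closed embedding, which by itself does not let you match affine covers of $X'$ and $Y$.

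The missing observation is that $J=K^{\circ\circ}/\varpi^nK^\circ$, although not nilpotent, is a nil ideal: if $\lvert x\rvert<1$ then $x^N\in\varpi^nK^\circ$ for $N\gg0$. So $\Spec(k)\to\Spec(K^\circ/\varpi^n)$ is a thickening, and your square is a pushout along a thickening and an affine morphism (\spcite{0ET0}). For such pushouts the Stacks project gives exactly the level-$n$ equivalence of categories of flat lfp schemes (\spcite{07RV,07RX}), which is what the paper uses.

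The same fact closes your own sketch. Since $X'_k\to X'$ is a homeomorphism, the pushout has the underlying space of $Y$ and affine opens correspond. In the Nakayama step only finitely many elements of $J$ occur, and they generate a nilpotent ideal.

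It also handles (1). $\Spec(V)\to\Spec(K^+/\varpi^nK^\circ)$ is a thickening with the same kernel. Hence each listed property of a morphism of flat lfp objects can be checked after reduction to $V$, and on the other side after reduction to $k$ (\spcite{0CF4}). This is cleaner than your argument that pushouts of proper schemes stay proper.
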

\begin{proof}
    Set $\cS:=\SKp$. Observe that for all $n$, 
    % https://q.uiver.app/#q=WzAsNCxbMSwxLCJcXFNwZWMoS14rLyhcXHZhcnBpIEteXFxjaXJjKV5uKT17XFxjU31fbiJdLFswLDEsIntcXGNTfV9uXlxcY2lyYz1cXFNwZWMoS15cXGNpcmMvXFx2YXJwaV5uKSJdLFswLDAsIlxcd2lkZXRpbGRle1xcY1N9XlxcY2lyYz1cXFNwZWMoaykiXSxbMSwwLCJcXFNwZWMoS14rL0tee1xcY2lyY1xcY2lyY30pPVxcd2lkZXRpbGRle1xcY1N9Il0sWzIsMV0sWzMsMF0sWzIsM10sWzEsMF0sWzAsNiwiIiwwLHsibGV2ZWwiOjEsInN0eWxlIjp7Im5hbWUiOiJjb3JuZXIifX1dXQ==
\[\begin{tikzcd}[row sep=1.1em]
	{\widetilde{\cS}^\circ=\Spec(k)} & {\Spec(K^+/K^{\circ\circ})=\widetilde{\cS}} \\
	{{\cS}_n^\circ=\Spec(K^\circ/\varpi^n)} & {\Spec(K^+/(\varpi K^\circ)^n)={\cS}_n}
	\arrow[""{name=0, anchor=center, inner sep=0}, from=1-1, to=1-2]
	\arrow[from=1-1, to=2-1]
	\arrow[from=1-2, to=2-2]
	\arrow[from=2-1, to=2-2]
	\arrow["\lrcorner"{anchor=center, pos=0.125, rotate=180}, draw=none, from=2-2, to=0]
\end{tikzcd}\]
 is a pushout square (cf.\ \spcite{0ET0}). Hence, by \spcite{07RV,07RX},
 \begin{equation}\label{eq:level_n_pushout}
    (\mathrm{Sch}_{\mathrm{flat},\mathrm{lfp}}/\cS_n)\isomarrow (\mathrm{Sch}_{\mathrm{flat},\mathrm{lfp}}/\cS_n^\circ)\times_{(\mathrm{Sch}_{\mathrm{flat},\mathrm{lfp}}/\widetilde{\cS}^\circ)} (\mathrm{Sch}_{\mathrm{flat},\mathrm{lfp}}/\widetilde{\cS})
    \end{equation}
    is an equivalence of categories for all $n$. It respects and reflects the properties in (1) by, e.g., \spcite{0CF4}, which applies to the thickenings. The statement (2) for \cref{eq:level_n_pushout} is immediate. The claim now follows from taking $2\text{-}\varprojlim\nolimits_n$ in \cref{eq:level_n_pushout}, see \cite[p.\ 289, I.2.1.8, I.2.3.2, I.4.8.1]{FujiwaraKato2018}.
\end{proof}

In particular, extending an admissible formal scheme or a morphism of admissible formal schemes over $\SKc$ to $\SKp$ is equivalent to extending its reduction to $\Spec(K^+/K^{\circ\circ})$. Let us make the following basic observations.

\begin{lemma}\label{l:generic_fiber_properties}
Let $f\colon \cX\to \cY$ be a morphism of admissible formal schemes over $\SKp$.
\begin{enumerate}
\item If $f$ is smooth, then $f\genf$ is smooth.
\item $f$ is proper if and only if $f\genf$ is proper.
\end{enumerate}
\end{lemma}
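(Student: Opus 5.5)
We reduce both statements to the rank one case over $\SKc$, where they are classical, by means of the base change $(-)^\circ$ and the reflection results of the previous subsection. The key observation is that generic fibers commute with this base change. If $\cX\to\SKp$ is admissible, write $\cX^\circ:=\cX\times_{\SKp}\SKc$; this is an admissible formal scheme over $\SKc$. For affinoid $\cX=\Spf(A)$ we have $\cX^\circ=\Spf(A\otimes_{K^+}K^\circ)$, with the tensor product suitably completed. Its generic fiber is $\Spa(A[\varpi^{-1}],\intcl{A\otimes K^\circ})$. Since $K^\circ=(K^+\cap(K^\circ)^\times)^{-1}K^+$ and these elements become units after inverting $\varpi$, we get $\intcl{A\otimes K^\circ}=\intcl{A}[K^\circ]$. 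Comparing with \cref{eq:affinoid_circ}, this gives $(\cX\genf)^\circ\cong(\cX^\circ)\genf$. The identification is functorial and glues, so $(f\genf)^\circ=(f^\circ)\genf$ for morphisms.

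For (1), let $f$ be smooth, i.e.\ its reduction over $\Spec(K^+/K^{\circ\circ})$ is smooth. Then $f^\circ$ is smooth over $\SKc$, either by base change or via the compatibility in \cref{p:pushout_equivalence}(1). In rank one, the generic fiber of a smooth morphism of admissible formal schemes is smooth; this is classical, see e.g.\ Huber or Fujiwara--Kato. Hence $(f\genf)^\circ$ is smooth. Now $f\genf$ is locally of topologically finite type, since both $\cX\genf$ and $\cY\genf$ are of topologically finite type over $\SKKp$. So \cref{l:circ_reflects_tft_smooth}(1) shows that $f\genf$ is smooth.

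For (2), first suppose $f$ is proper. Then $f^\circ$ is proper, being a base change, and by the rank one theory $(f^\circ)\genf$ is proper; for the direction ``proper formal implies proper rigid'' see Lütkebohmert or Temkin. To pass back to $f\genf$ we need partial properness of $f\genf$ before \cref{l:circ_reflect_pp} applies. I would check this directly with the valuative criterion \cref{l:valuative criterion}. Given $\Spa(R,R^\circ)\to\cX\genf$ over $\Spa(R,R^+)\to\cY\genf$, work locally on $\cY=\Spf(B)$. The map corresponds to a morphism of formal schemes $\Spf(R^\circ_0)\to\cX$, where $R^\circ_0$ is a ring of definition. Since $f$ is proper, the valuative criterion for the proper reduction, applied to the $\varpi$-adic thickenings, together with integral closedness of $R^+$, forces the map to factor through $\Spf$ of an open subring of $R^+$. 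This is the same argument as over $K^\circ$. Quasi-compactness and separatedness of $f\genf$ follow from quasi-compactness and separatedness of $f$.

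For the converse, suppose $f\genf$ is proper. Then $(f^\circ)\genf$ is proper, so by Temkin \cite[Cor.\ 4.4, 4.5]{Temkin2000LocalProperties} $f^\circ$ is proper, i.e.\ proper over $\Spec(k)$ on reductions. By \cref{p:pushout_equivalence}(1), properness of $f$ is then equivalent to properness of the reduction $\tilde f$ over $\Spec(K^+/K^{\circ\circ})$. We already know that $\tilde f$ is separated and of finite type (separatedness needs a similar check) and proper on the special fiber over $k$. The remaining input is the valuative criterion over the valuation ring $K^+/K^{\circ\circ}$. Existence of lifts for $\tilde f$ should come from lifting valuations: points of $\cX\genf$ specialize to points with arbitrary higher rank valuation rings refining $K^+$, and partial properness of $f\genf$ provides the required extensions.

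I expect this last step to be the main obstacle: deducing properness of the reduction over $K^+/K^{\circ\circ}$ from properness of $f\genf$. Concretely, one has to show that the specialization map $\sp_\cX$ is surjective onto the reduction and compatible with valuations of $K^+/K^{\circ\circ}$, so that the valuative criterion can be transferred. I would first try to carry this out using the surjectivity of $\sp_\cX$ from \cite{Dine2025FormalModels}.
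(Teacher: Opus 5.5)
Part (1) is exactly the paper's argument: the rank one case, the compatibility $(\cX\genf)^\circ\cong(\cX^\circ)\genf$, and \cref{l:circ_reflects_tft_smooth}. For (2) the paper does not pass through rank one at all. It observes that (2) is a special case of \cite[Prop.\ 3.12.5]{Huber1993Bewertungsspektrum}, whose proof works without the Noetherian hypothesis.

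Your treatment of (2) has a genuine gap in the converse direction, which is the substantial half. After invoking Temkin you only know that $\tilde f$ is of finite presentation and proper over $\Spec(k)$. Note that this is the \emph{generic} fiber of $\Spec(K^+/K^{\circ\circ})$, not the special one. This says nothing about the points of $\tilde{\cX}$ over the non-generic points of $\Spec(K^+/K^{\circ\circ})$. Already when $K^+/K^{\circ\circ}$ is a discrete valuation ring, $\mathbb{P}^1$ minus a closed point of the closed fiber is flat and of finite presentation with proper generic fiber, but it is not proper. The missing information must come from the higher rank points of $\cX\genf$, and your proposal stops exactly there ("should come from lifting valuations"). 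Separatedness of $\tilde f$ is also left unchecked. So, as written, the implication from properness of $f\genf$ to properness of $f$ is not proved.

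Your forward direction is repairable, but one step is false as stated. A map $\Spa(R,R^\circ)\to\cX\genf$ does not in general come from a morphism $\Spf(R^\circ_0)\to\cX$ when $\cX$ is not affine. Such maps extend only after an admissible blow-up of the source; for example, $T\mapsto[T:\varpi]$ from the unit disc to $\mathbb{P}^1$ does not extend to the formal projective line. The lifting check should instead be done with affinoid fields $(L,L^+)$, where $\cX\genf(L,L^+)=\cX(L^+)$ does hold. Once partial properness is verified directly in this way, the detour through the rank one case becomes superfluous.

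For the converse, you can avoid any valuative criterion over $K^+/K^{\circ\circ}$ by using that $\mathrm{sp}$ is continuous, closed, surjective and natural, i.e.\ $f\circ\mathrm{sp}_{\cX}=\mathrm{sp}_{\cY}\circ f\genf$:
\begin{itemize}
\item For closed $Z\subseteq\cX$ this gives $f(Z)=\mathrm{sp}_{\cY}\bigl(f\genf(\mathrm{sp}_{\cX}^{-1}(Z))\bigr)$, which is closed.
\item Applying this to the diagonal, factored through the $\varpi$-torsion-free part of $\cX\times_{\cY}\cX$, gives separatedness.
\item Applying it to $f\times\mathrm{id}$ on $\cX\times\Spf(K^+\langle T_1,\dots,T_n\rangle)$, whose generic fiber is a base change of $f\genf$, shows that every $\tilde f\times\mathrm{id}_{\dA^n}$ is closed.
\item Together with quasi-compactness (again from surjectivity of $\mathrm{sp}$), the standard $\dA^n$-criterion then gives universal closedness of $\tilde f$.
\end{itemize}
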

\begin{proof}
 (1)   This is known in the classical case $K^+=K^\circ$. The general case is a consequence of \cref{l:circ_reflects_tft_smooth} since $(-)\genf$ is compatible with $(-)\times_{\SKp}\SKc$. (2) is a special case of \cite[Prop.\ 3.12.5]{Huber1993Bewertungsspektrum}. The arguments do not need the Noetherian hypothesis.
\end{proof}

By algebraically compactifying the reduction of formal models over $\SKc$, \cref{p:pushout_equivalence} allows us to compactify certain spaces to $\SKKp$.

\begin{proposition}\label{p:equivalence_proper}
    Let $S$ be an adic space that is locally of topologically finite type over $\SKKp$.
    The functor
        $(-)^\circ\colon \mathrm{Adic}_{S}^{\mathrm{proper,tft}} \to \mathrm{Adic}_{S^\circ}^{\mathrm{proper,tft}}$
    is an equivalence, where $\mathrm{tft}$ is shorthand for 'topologically finite type'.
\end{proposition}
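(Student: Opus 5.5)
Full faithfulness comes directly from \cref{p:valuative_strong_criterion}, since proper morphisms are partially proper; see also \cref{r:circ_fully_faithful}. The remaining task is essential surjectivity. Fix a proper $Y'\to S^\circ$ of topologically finite type. The plan is to extend $Y'$ to $S$ using formal models: compactify the reduction and glue via \cref{p:pushout_equivalence}.

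\emph{Local case.} First assume $S=\Spa(R,R^+)$ is affinoid. Choose an admissible formal model $\cS=\Spf(A)$ of $S$ over $\SKp$. Then $\cS^\circ:=\cS\times_{\SKp}\SKc$ is a formal model of $S^\circ$. By Raynaud's theory (Bosch--Lütkebohmert, via Temkin's properness comparison), after an admissible blow-up of $\cS^\circ$, the proper morphism $Y'\to S^\circ$ admits a proper formal model $\cY'\to\cS'^\circ$. Admissible blow-ups of $\cS^\circ$ are centred in ideals containing a power of $\varpi$, so they come from admissible blow-ups of $\cS$. We may therefore assume $\cY'\to\cS^\circ$ is proper.

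By \cref{p:pushout_equivalence}, extending $\cY'$ over $\cS$ amounts to extending its reduction over $\cS^\circ\times\Spec(k)$. This reduction is a flat, locally finitely presented scheme $Y_0$ that is proper over $\widetilde\cS^\circ:=\cS\times_{\SKp}\Spec(k)$. It must be extended to a flat proper scheme over $\widetilde\cS:=\cS\times_{\SKp}\Spec(K^+/K^{\circ\circ})$, of which $\widetilde\cS^\circ$ is the generic fibre over the valuation ring $K^+/K^{\circ\circ}$.

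To do this, spread $Y_0$ out over $\widetilde\cS\times_{\Spec(K^+/K^{\circ\circ})}\Spec(K^+/K^{\circ\circ}[1/e])$ for suitable $e$, by finite presentation and a limit argument. Then apply Nagata compactification to obtain a proper model over $\widetilde\cS$. Next take the scheme-theoretic closure to force flatness over the valuation ring (torsion-free implies flat). Finite presentation then follows by Raynaud--Gruson. Gluing yields an admissible $\cY\to\cS$ that is proper by \cref{p:pushout_equivalence}(1). By \cref{l:generic_fiber_properties}(2), $Y:=\cY\genf\to S$ is proper, and $Y^\circ\cong Y'$ since the generic fibre commutes with $(-)\times_{\SKp}\SKc$.

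\emph{Gluing.} For general $S$, cover it by affinoids $S_i$ and construct $Y_i$ over each. Full faithfulness provides canonical isomorphisms on overlaps $S_i\cap S_j$, because properness and finite type are preserved by restriction to opens. Full faithfulness also gives the cocycle condition. The $Y_i$ then glue to a proper $Y\to S$; properness is local on the target, and topologically finite type likewise.

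\emph{Main obstacle.} The delicate point is the algebraic extension step. We must produce a proper, flat, finitely presented model over $\widetilde\cS$ while keeping exactly the given generic fibre over $\widetilde\cS^\circ$. This is a scheme-theoretic closure inside a Nagata compactification, followed by an application of Raynaud--Gruson for finite presentation. A secondary care point is that passing from $K^\circ$ to $K^+/K^{\circ\circ}$ need not be of finite type, which is why the spreading-out step is needed. I would first check that $\widetilde\cS^\circ\to\widetilde\cS$ is a limit of the open immersions given by inverting $e\in K^+\setminus K^{\circ\circ}$, so that Nagata applies at a finite stage.
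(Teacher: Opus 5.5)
Your proposal is correct and follows essentially the same route as the paper. Both arguments take full faithfulness from \cref{p:valuative_strong_criterion} and reduce to a quasi-compact base by gluing. They then choose a formal model of the morphism over $\SKc$, spread out its reduction over an affine open of $\Spec(K^+/K^{\circ\circ})$, compactify with Nagata, flatten (your scheme-theoretic closure is equivalent to the paper's killing of $V$-torsion), obtain finite presentation by Raynaud--Gruson, and glue via \cref{p:pushout_equivalence}.

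The differences are minor. The paper blows up the source $\cX^\circ$ rather than the target, which spares you lifting blow-ups from $\cS^\circ$ to $\cS$. The paper also makes explicit the point you flag as delicate: the spread-out morphism is chosen to be proper, so the Nagata compactification is already an isomorphism over the open $U$, and the generic fibre is unchanged.
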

\begin{proof}
    By \cref{p:valuative_strong_criterion} the functor is fully faithful. Hence by gluing one may reduce to the case where $S$ is furthermore quasi-compact and quasi-separated. Let $f^\circ\colon X^\circ\to S^\circ$ be a proper morphism of topologically finite type. By \cite[Thm.\ 6.13]{Dine2025FormalModels} there exist quasi-compact and quasi-separated admissible formal models $\cX^\circ\to \SKc$ of $X^\circ$ and $\cS\to \SKp$ of $S$. Note that the model produced from \cite[Thm.\ 6.13]{Dine2025FormalModels} is indeed admissible in our sense by \cite[Cor.\ I.2.3.2]{FujiwaraKato2018}. After replacing $\cX^\circ$ by an admissible formal blowup, $f^\circ$ descends to a morphism $\overline{f}^\circ\colon \cX^\circ\to \cS^\circ$. It is proper by \cref{l:generic_fiber_properties}.
   
    Let $V:= K^+/K^{\circ\circ}$. By approximating $\Spec(k)\subseteq\Spec(V)$ by affine open neighborhoods and using \spcite{01ZL,081C,081F}, there exists a nonempty affine open $U\subseteq \Spec(V)$ such that $\cX_k$ spreads out to a flat scheme $\cX_U\to U$, and $\overline{f}^\circ_k$  spreads out to a proper morphism $\cX_U\to \cS_U$. By \spcite{0F41} there exists a proper scheme $\cX_{\Spec(V)}\to\cS_{\Spec(V)}$ and a schematically dense open immersion $j\colon\cX_U\to \cX_{\Spec(V)}$ over $\cS_{\Spec(V)}$. Properness of $\cX_U\to \cS_U$ shows that $j$ is an isomorphism over $U$. Since $V$ is a valuation ring, a scheme over $\Spec(V)$ is flat if and only if its structure sheaf is $V$-torsion free. Hence we can construct a $V$-flattening $\cX_{\Spec(V)}^\mathrm{fl}\xhookrightarrow{} \cX_{\Spec(V)}$ by considering the closed subscheme defined by the ideal sheaf of $V$-torsion elements. This ideal is trivial over $U$. By \spcite{05IN} $\cX^\mathrm{fl}_{\Spec(V)}\to \Spec(V)$ is of finite presentation, and $\smash{\cX^\mathrm{fl}_{\Spec(V)}\to \cS_{\Spec(V)}}$ remains proper as a composition of proper maps. It therefore gives rise to an admissible formal scheme $\cX\to \SKp$ and a proper morphism $\overline{f}\colon \cX\to \cS$ with $\SKc$-fiber $\overline{f}^\circ$ by \cref{p:pushout_equivalence}. Since $\cX\genf\to S$ is proper by \cref{l:generic_fiber_properties} this shows the equivalence.
\end{proof}

\section{\texorpdfstring{The uniformization cross over $\SKKp$}
    {The uniformization cross over Spa(K,K⁺)}}\label{s:4}
    
We now show the generalization of \cref{t:classical_uniformization_analytic} over $\SKKp$. 
There is a notable difference to the classical case, however: the abelian part of the uniformizing space $E$ may no longer have good reduction over the whole space $\SKKp$.
{Let us begin with the following obvious definition.}

\begin{definition}
    An \textit{abeloid variety} over $\SKKp$ is a smooth, proper group space $A\to \SKKp$ with geometrically connected fibers.
\end{definition}

We may replace \textit{geometrically connected fibers} by \textit{connected} in the above definition.
Indeed, it is a consequence of, e.g., \cref{l:dense_cocont}, that connectedness can be checked on the $\SKKc$-fiber. Therefore having geometrically connected fibers is also a property of the $\SKKc$-fiber. As in the case of schemes, a quasi-separated and connected adic space locally of topologically finite type over $\SKKc$ that admits a $\SKKc$-rational point is geometrically connected, see \cite[Thm.\ 3.2.1]{Conrad1999IrreducibleComponents} and also \cite[Lem.\ 3.4]{Mann2023NormalIrreducibleAdicSpaces}.

The adic space $A^{\mathrm{an}}$ over $\SKKp$ associated to an abelian variety $A\to\Spec(K)$ is an abeloid variety. By adic GAGA, the abelian varieties are exactly the projective abeloid varieties.

\begin{lemma}\label{l:abeloidEquivalence} Taking the $K^\circ$-fiber defines an equivalence of categories between abeloid varieties over $\SKKp$ and abeloid varieties over $\SKKc$.
\end{lemma}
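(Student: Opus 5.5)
The plan is to deduce the statement from \cref{p:equivalence_proper} and \cref{p:valuative_strong_criterion}, adding the group structure and the defining properties afterwards. First, $(-)^\circ$ sends abeloid varieties over $\SKKp$ to abeloid varieties over $\SKKc$. Smoothness and properness are preserved by base change, $(-)^\circ$ commutes with fiber products, so a group space goes to a group space, and geometric connectedness of fibers is a property of the $\SKKc$-fiber, as remarked after the definition. Full faithfulness comes from \cref{p:valuative_strong_criterion}: an abeloid $A'$ is proper, hence partially proper, over $\SKKp$, so $\Mor(A,A')\to\Mor(A^\circ,A'^\circ)$ is bijective. A morphism is a homomorphism exactly when two morphisms $A\times A\to A'$ agree. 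Since $(A\times A)^\circ=A^\circ\times A^\circ$, injectivity of the same map shows that homomorphisms correspond to homomorphisms.

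For essential surjectivity, let $A_0$ be an abeloid variety over $\SKKc$. It is proper and smooth, and locally of topologically finite type since it comes from a rigid space. Applying \cref{p:equivalence_proper} with $S=\SKKp$ (note $S^\circ=\SKKc$) gives a proper adic space $A\to\SKKp$ of topologically finite type with $A^\circ\cong A_0$. Then $A$ is smooth by \cref{l:circ_reflects_tft_smooth}, since it is of topologically finite type and hence of ${}^+$weakly topologically finite type. Its fibers are geometrically connected because this can be checked on the $\SKKc$-fiber.

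It remains to transport the group structure. The products $A\times A$ and $A\times A\times A$ are again proper over $\SKKp$, and their $\circ$-fibers are the corresponding products of $A_0$. Full faithfulness of $(-)^\circ$ on partially proper spaces (\cref{r:circ_fully_faithful}) lets the multiplication $m_0$, inverse $i_0$ and unit $e_0\colon\SKKc\to A_0$ lift uniquely to morphisms $m$, $i$, $e$; for $e$ we use $\SKKp^\circ=\SKKc$. The group axioms are equalities of morphisms between proper spaces over $\SKKp$, and they hold after applying $(-)^\circ$. By the injectivity part of \cref{p:valuative_strong_criterion}, they therefore hold for $m$, $i$, $e$.

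The main obstacle lies in the essential surjectivity step: one must check that an abeloid variety over $\SKKc$ lies in the domain of \cref{p:equivalence_proper} and that the hypotheses of \cref{l:circ_reflects_tft_smooth} hold for the extension. All genuinely new input, namely the compactification of formal models through \cref{p:pushout_equivalence}, is already contained in \cref{p:equivalence_proper}. The rest is formal bookkeeping with fully faithful functors commuting with finite products.
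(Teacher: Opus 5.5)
Your proposal is correct and follows the paper's route: the paper's proof just cites \cref{p:equivalence_proper} for the underlying proper spaces and \cref{l:circ_reflects_tft_smooth} for smoothness. You spell out the remaining bookkeeping the paper leaves implicit, namely full faithfulness via \cref{p:valuative_strong_criterion}, lifting the group structure, and checking geometric connectedness on the $\SKKc$-fiber.
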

\begin{proof}
This follows from \cref{p:equivalence_proper,l:circ_reflects_tft_smooth}.
\end{proof}

In particular,  abeloid varieties are commutative because commutativity holds in the classical case $K^+=K^\circ$.

\begin{definition}\label{d:good_reduction}
    An abeloid variety $A\to \SKKp$ has \textit{good reduction} if it admits a formal group model $\cA\to \SKp$ which is smooth. It has \textit{generically good reduction} if $A^\circ\to \SKKc$ has good reduction.
\end{definition}

\begin{remark}\label{r:good_reduction_abelian}
    Every smooth formal group model of an abeloid variety $A\to \SKKp$ is a formal abelian scheme.
\end{remark}
\begin{proof}
    Suppose that $\cA\to \SKp$ is a smooth formal group model of $A$. Then $\cA\to \SKp$ is proper by \cref{l:generic_fiber_properties}. Moreover, $\cA$ is connected since $A$ is connected. By considering the reduction over $\Spec(K^+/K^{\circ\circ})$,
    the statement follows from
    Stein factorization which implies that 
    a smooth, proper morphism of connected schemes that admits a section already has geometrically connected fibers, see, e.g., \cite[Prop.\ 27.191]{GoertzWedhorn2023AlgebraicGeometryII}.
\end{proof}

\begin{definition}\label{d:lattice}
    Let $T\to \SKKp$ be a split algebraic torus and let $B\to \SKKp$ be an abeloid variety with generically good reduction. Let $0\to T\to E\to B\to 0$ be an extension. 
    A \textit{(full) lattice} $M\xhookrightarrow{} E$ is a closed subgroup which is isomorphic to the constant group space $\underline{\dZ^r}:=\coprod_{\dZ^r} \SKKp$, where $r$ is the torus rank of $T$. A morphism of lattices is a morphism over $E$.
\end{definition}

Note that in the case $E \cong \dG_{m,\,\SKKp}$ lattices correspond to subgroups generated by elements $q \in K^\times$ with $K^\circ$-valuation of $q$ strictly smaller than $1$.
The definition agrees with the classical definition \cite[p.\ 257]{Luetkebohmert2016RigidGeometryCurves} when $K^+=K^\circ$.

\begin{lemma}\label{l:Ext_abeloid_circ_iso}
    Let $A\to \SKKp$ be an abeloid variety. Then the base change map $\mathrm{Ext^1_{an}}(A, \dG_{m,\SKKp})\to \mathrm{Ext^1_{an}}(A^\circ, \dG_{m,\SKKc})$ is an isomorphism.
\end{lemma}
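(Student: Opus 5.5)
We prove bijectivity of the base change map
\[
\mathrm{Ext}^1_{\mathrm{an}}(A,\dG_{m,\SKKp})\longrightarrow \mathrm{Ext}^1_{\mathrm{an}}(A^\circ,\dG_{m,\SKKc})
\]
by identifying extensions with geometric objects that are partially proper over $A$ (respectively $A^\circ$). An extension $0\to \dG_m\to E\to A\to 0$ of sheaves on the analytic site is a $\dG_m$-torsor over $A$ carrying a group structure that is compatible with the extension. Such a torsor is representable by an adic space, locally on $A$ isomorphic to $U\times\dG_m$, hence partially proper over $A$. Concretely, it is the complement of the zero section in the line bundle $\cL$ attached to the torsor. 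So $\mathrm{Ext}^1$ is the group of isomorphism classes of pairs $(\cL,\text{group structure})$. Equivalently, it consists of line bundles $\cL$ on $A$ with a rigidification at $0$ and an isomorphism $m^*\cL\cong p_1^*\cL\otimes p_2^*\cL$ on $A\times A$ satisfying the cocycle (associativity and commutativity) conditions, up to isomorphism.

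\textbf{Step 1: injectivity.} Let $E$ be an extension whose base change $E^\circ$ splits, i.e.\ there is a homomorphic section $s^\circ\colon A^\circ\to E^\circ$. Since $E\to A$ is partially proper, \cref{p:valuative_strong_criterion} (applied with $S=A$, $X=E$, $T=A$) lifts $s^\circ$ uniquely to a section $s\colon A\to E$. The conditions that $s$ is a homomorphism and that the diagrams commute are equalities of morphisms $A\times A\to E$. These can be checked after $(-)^\circ$, because $E$ is separated over $\SKKp$ and the first map in \cref{p:valuative_strong_criterion} is injective. Hence $E$ splits.

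\textbf{Step 2: surjectivity.} Let $E'$ be an extension of $A^\circ$ by $\dG_m$ over $\SKKc$. I would first produce a $\dG_m$-torsor on $A$ restricting to the torsor $E'$. By \cref{r:cohomology_comparison}, applied to the partially proper group space $G=\dG_{m,A}$ over $A$, we have $H^1(A^\circ_{\mathrm{an}},\dG_m)=H^1(A_{\mathrm{an}},\dG_m)$, with the identification given by restriction along $j\colon A^\circ\to A$. Thus there is a torsor $E$ on $A$ with $E^\circ\cong E'$. Alternatively, \cref{l:circ_embedding}(1) makes $j_*$ an equivalence on coherent sheaves, hence on line bundles.

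Next, the group law transfers. The multiplication $E'\times E'\to E'$ and the inverse are morphisms over $\SKKc$ into a space that is partially proper over $A^\circ$, and they are compatible with the maps to $A^\circ$. Since $(E\times E)^\circ=E^\circ\times E^\circ$ and $E\to A$ is partially proper, \cref{p:valuative_strong_criterion} (with $S=A$, $T=E\times E$ mapping to $A$ via multiplication) yields unique lifts. The group axioms, the $\dG_m$-equivariance and the exactness follow from the injectivity part of \cref{p:valuative_strong_criterion}. Exactness is also covered by \cref{l:circ_reflect_pp}, since an epimorphism of sheaves can be checked on the $\circ$-fiber.

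\textbf{Step 3: homomorphism.} Finally, the map is a group homomorphism, because the Baer sum is built from fiber products and pushouts, and base change preserves both.

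\textbf{Main obstacle.} I expect the main obstacle to be justifying that analytic $\dG_m$-torsors (sheaf-theoretic extensions) are representable and partially proper over $A$, so that \cref{p:valuative_strong_criterion} applies. I would handle this by gluing $U_i\times\dG_m$ along the transition cocycles. Partial properness is local on the target, and $\dG_{m}\to\SKKp$ is partially proper as the analytification of a separated finite type scheme.
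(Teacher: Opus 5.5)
Your proof is correct, but it takes a different route from the paper's.

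\textbf{The paper's route.} The paper argues cohomologically. By the proof of \cite[Thm.\ 3.1]{RibeiroRosengarten2025Extensions}, the vanishing of pointed morphisms $A^i\to\dG_m$ and $(A^\circ)^i\to\dG_m$ for $i=1,2,3$ identifies $\mathrm{Ext}^1_{\mathrm{an}}$ on both sides with $H^1_m$. This is the group of line bundle classes in the kernel of $m^*-\mathrm{pr}_1^*-\mathrm{pr}_2^*$. The paper then applies \cref{r:cohomology_comparison} to $A$ and to $A\times A$ to conclude $H^1_m(A,\dG_m)\cong H^1_m(A^\circ,\dG_m)$.

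\textbf{Your route.} You treat an extension as a geometric object, namely a $\dG_m$-torsor that is partially proper over $A$. You use the $H^1$-comparison only on $A$, to lift the underlying torsor. You then lift the unit, the group law and the inverse, and verify all axioms and compatibilities, through the bijectivity and injectivity statements of \cref{p:valuative_strong_criterion}. Injectivity likewise comes from lifting a homomorphic splitting.

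\textbf{What each approach buys.} The paper's argument is shorter. It also plugs directly into the dictionary $\mathrm{Ext}^1\cong H^1_m\cong\mathrm{Pic}^\tau$ that is used again for the dual in \cref{c:dual_exists} and in \cref{l:Ext_formal_agrees}. Your argument does without the Ribeiro--Rosengarten input and the vanishing of $\mathrm{Mor}_e(A^i,\dG_m)$. It never compares line bundles on $A\times A$ and never uses properness of $A$. So it carries over essentially unchanged to extensions of more general commutative group spaces by partially proper groups, such as split tori. The cost is that you must justify representability and partial properness of the torsor and check the group axioms by hand. The paper takes the first of these for granted as well, remarking right after the lemma that such extensions are representable and partially proper.
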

\begin{proof}
    By the proof of \cite[Thm.\ 3.1]{RibeiroRosengarten2025Extensions}, $\mathrm{Mor}_e(A^i,\dG_{m})=\mathrm{Mor}_e((A^\circ)^i,\dG_{m})=0$ for $i=1,2,3$ implies that
    \begin{equation*}
        \mathrm{Ext^1_{an}}(A,\dG_{m,\SKKp})\cong H^1_m(A,\dG_{m,A})\cong H^1_m(A^\circ,\dG_{m,A^\circ})\cong \mathrm{Ext^1_{an}}(A^\circ,\dG_{m,\SKKc}),
    \end{equation*}
    where the middle isomorphism holds by \cref{r:cohomology_comparison}.
\end{proof}

Note that every extension of representable sheaves on the analytic site is itself representable by descent. Similarly, every extension of an abeloid variety by a partially proper group space is itself partially proper over $\SKKp$.

\begin{theorem}\label{t:uniformization_analytic}
   Let $A$ be an abeloid variety over $\SKKp$.
   After a finite separable affinoid field extension,
   there exists an isomorphism $A\cong E/M$, where $0 \to T\to E\to B \to 0$ is an extension of an abeloid variety $B$ with generically good reduction by a split algebraic torus $T\cong \dG_{m,\SKKp}^r$, and $\underline{\dZ^r}\cong M\subseteq{} E$ is a full lattice. 
   
   In other words, we have a diagram of short exact sequences of adic group spaces over $\SKKp$, which we call the \textit{uniformization cross} of $A\cong E/M$:
        % https://q.uiver.app/#q=WzAsOSxbMSwyLCJUIl0sWzIsMiwiRSJdLFszLDIsIkIiXSxbMCwyLCIwIl0sWzQsMiwiMCJdLFsyLDEsIk0iXSxbMiwzLCJBIl0sWzIsMCwiMCJdLFsyLDQsIjAiXSxbMCwxXSxbMSwyXSxbMywwXSxbMiw0XSxbMSw2XSxbNSwxXSxbNyw1XSxbNiw4XV0=
\[\begin{tikzcd}[column sep=0.9em,row sep=0.7em]
	&& 0 && \\
	&& M \\
	0 & T & E & B & 0. \\
	&& A \\
	&& 0
	\arrow[from=1-3, to=2-3]
	\arrow[from=2-3, to=3-3]
	\arrow[from=3-1, to=3-2]
	\arrow[from=3-2, to=3-3]
	\arrow[from=3-3, to=3-4]
	\arrow[from=3-3, to=4-3]
	\arrow[from=3-4, to=3-5]
	\arrow[from=4-3, to=5-3]
\end{tikzcd}\]
The horizontal extension is called the Raynaud extension of $A$. Moreover,
\begin{enumerate}
    \item Every {homomorphism}
    between abeloid varieties lifts uniquely to a morphism between their uniformization crosses. In particular, the uniformization cross is unique up to unique isomorphism lifting $\mathrm{id}_A$.
    \item $E\to A$ is the universal (analytic) covering space of $A$ in the sense that $H^1(E_{\mathrm{an}}, \dZ)=0$ and that $E\to A$ is final among pointed morphisms $S\to A$ where $S$ is a connected adic space with $H^1(S_{\mathrm{an}}, \dZ)=0$.
    \item The uniformization cross for $A$ exists if and only if the uniformization cross exists for $A^\circ$. In this case, the cross of $A^\circ$ is the $\SKKc$-fiber of the cross of $A$.
\end{enumerate} 
\end{theorem}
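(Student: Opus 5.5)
The plan is to transport the classical uniformization cross of $A^\circ$ (\cref{t:classical_uniformization_analytic}) to $\SKKp$ piece by piece, using the comparison results of \cref{s:3}. Note first that a finite separable affinoid field extension $(L,L^+)$ of $(K,K^+)$ induces a finite separable extension of $K$, and conversely every finite separable extension $L/K$ carries a valuation ring $L^+$ dominating $K^+$ with $L^+\cap K=K^+$. Hence we may pass to an extension over which $A^\circ$ admits a cross
\[0\to T^\circ_0\to E_0\to B_0\to 0,\qquad M_0\subseteq E_0,\qquad A^\circ\cong E_0/M_0 .\]

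\emph{Extending the pieces.} By \cref{l:abeloidEquivalence}, $B_0$ is the $\SKKc$-fiber of a unique abeloid variety $B$ over $\SKKp$; since $B^\circ=B_0$ has good reduction, $B$ has generically good reduction. The torus $T=\dG_{m,\SKKp}^r$ has $T^\circ=(\dG^{\mathrm{an}}_{m,K})^r$. The extension class of $E_0$ lies in
\[\mathrm{Ext}^1_{\mathrm{an}}(B_0,T^\circ)\cong \mathrm{Ext}^1_{\mathrm{an}}(B_0,\dG_m)^r,\]
so by \cref{l:Ext_abeloid_circ_iso} it lifts uniquely to an extension $0\to T\to E\to B\to 0$ with $E^\circ\cong E_0$. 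As noted before the theorem, $E$ is representable and partially proper over $\SKKp$. Each lattice point $q\in M_0(\SKKc)=\mathrm{Mor}(\SKKc,E_0)$ lifts uniquely to a section of $E$ by \cref{p:valuative_strong_criterion}, giving a homomorphism $\underline{\dZ^r}\to E$. Its base change is the closed immersion $M_0\hookrightarrow E_0$, so it is itself a closed immersion by \cref{l:circ_reflect_pp}, since $\underline{\dZ^r}\to E$ is partially proper.

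\emph{The map to $A$.} The composite $E_0\to A^\circ$ is a morphism into $A$, and $A$ is partially proper. By \cref{p:valuative_strong_criterion} it therefore corresponds to a unique morphism $\pi\colon E\to A$, which is a homomorphism by uniqueness applied to $E\times E$, and which kills $M$. To see that $0\to M\to E\to A\to 0$ is exact as sheaves on the analytic site, we proceed in steps:
\begin{enumerate}
\item Surjectivity: $\pi^\circ$ is an epimorphism, and since $\pi$ is an étale map (\cref{l:circ_reflects_tft_smooth}), local sections over $A^\circ$ lift along saturated covers as in \cref{l:dense_cocont} and the proof of \cref{l:circ_reflect_pp}.
\item Kernel: $\ker\pi$ is a closed subgroup whose fiber is $M_0$. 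The comparison $\ker\pi=M$ follows by separatedness together with \cref{p:valuative_strong_criterion}, applied to the map $\ker\pi\to E$ and the sections of $M$.
\end{enumerate}
This step is the main obstacle. $\pi$ is not partially proper, so \cref{l:circ_reflect_pp} does not apply directly. The key point is that the functor $((-)^\circ)_*$ is exact on partially proper group sheaves (\cref{r:cohomology_comparison}), and $E$, $M$ and $A$ are all partially proper. Applying it to the exact sequence $0\to M_0\to E_0\to A^\circ\to 0$ should give exactness over $\SKKp$ directly, using $((-)^\circ)_*G^\circ=G$ for each term and the preservation of epimorphisms.

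\emph{Properties (1)--(3).} For (3), the construction above produces a cross for $A$ from one for $A^\circ$, and conversely base change preserves all the pieces. For (2), $H^1(E_{\mathrm{an}},\dZ)=H^1(E^\circ_{\mathrm{an}},\dZ)=0$ by \cref{r:cohomology_comparison}; finality for a pointed map $S\to A$ follows by passing to $S^\circ\to A^\circ$, lifting via the classical universal property, and descending the lift with \cref{p:valuative_strong_criterion}, since $E$ is partially proper. For (1), a homomorphism $A\to A'$ induces one on the $\SKKc$-fibers; this lifts uniquely to the classical crosses, and each component lifts uniquely back to $\SKKp$ because every object in the cross is partially proper, using \cref{p:valuative_strong_criterion}. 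Uniqueness of the cross up to unique isomorphism is then the special case of $\mathrm{id}_A$.
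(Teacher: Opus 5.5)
Your proposal is correct and is essentially the paper's proof. Both arguments extend $B$ by \cref{l:abeloidEquivalence}, $E$ by \cref{l:Ext_abeloid_circ_iso}, and $T$, $M$ and all the maps by full faithfulness of $(-)^\circ$ on partially proper spaces (\cref{p:valuative_strong_criterion}), and then read off (1)--(3) from the classical theorem.

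One statement needs correcting: your claim that $\pi\colon E\to A$ is not partially proper is false. Since $E\to\SKKp$ is partially proper and $A\to\SKKp$ is separated, $\pi$ factors as its graph $E\to E\times_{\SKKp}A$, a closed immersion, followed by the projection to $A$, a base change of $E\to\SKKp$. So $\pi$ is partially proper. This is the same cancellation you already use implicitly to call $\underline{\dZ^r}\to E$ partially proper.

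This is why the paper can apply \cref{l:circ_reflect_pp} directly to every arrow of the lifted cross, identifying kernels by full faithfulness. Your detour through exactness of $((-)^\circ)_*$ and $((-)^\circ)_*X^\circ=X$ is nevertheless valid. It repackages the same ingredients, \cref{l:dense_cocont} and \cref{p:valuative_strong_criterion}, that prove \cref{l:circ_reflect_pp}, so nothing in your argument breaks.
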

\begin{proof}
    Let $K'/K$ be a finite separable field extension that makes the conclusion of \cref{t:classical_uniformization_analytic} hold for $A^\circ\times_{\SKKc} \Spa(K', K'^\circ)$. By \spcite{00IA} there exists a valuation ring $V'$ of the residue field of $K'^\circ$ dominating $K^+/{K^{\circ\circ}}$. Then the preimage of $V'$ under the reduction map $K'^\circ\to K'^\circ/K'^{\circ\circ}$
    is a valuation ring of $K'$ dominating $K^+$. It is open as it contains $K'^{\circ\circ}$, and therefore defines an affinoid field $S$. We base-change $A$ to $S$ and replace $\SKKp$ by $S$ to reduce notation.
    
    Denote the objects appearing in the uniformization cross of $A^\circ$ by the same letters together with the symbol $(-)^\circ$. Note that $B^\circ$ uniquely lifts to an abeloid variety $B\to \SKKp$ by \cref{l:abeloidEquivalence}. It is clear that $M^\circ$ and $T^\circ$ lie in the essential image of the fully faithful functor  $(-)^\circ\colon \mathrm{Adic}_{\SKKp}^{\mathrm{p.\ proper}} \to \mathrm{Adic}_{\SKKc}^{\mathrm{p.\ proper}}$. By \cref{l:Ext_abeloid_circ_iso}, $E^\circ$ also lies in the essential image. Hence the uniformization cross over $\SKKc$ uniquely lifts to a diagram of group spaces over $\SKKp$. They form short exact sequences by \cref{l:circ_reflect_pp}. (1) follows from the full faithfulness of $(-)^\circ$ for partially proper spaces.
    (2) follows from \cref{p:valuative_strong_criterion,r:cohomology_comparison,t:classical_uniformization_analytic}. (3) is immediate from the construction.
\end{proof}

In view of this result one might speculate by analogy with the classical case that the abelian part $B$ has a good reduction model over the ring of integers $K^+$. However, this is not always the case, see \cref{e:classification_RU_curves}. \Cref{c:uniformization_formal3} describes the resulting obstruction in terms of the marked Néron model of $A$.

\begin{definition}\label{d:dual_abeloid}
    The translation-invariant Picard functor $\mathrm{Pic}_A^{\tau}$ associated to an abeloid variety $A\to \SKKp$ is the presheaf of groups mapping an adic space $T$ locally of topologically finite type over $\SKKp$ to 
\begin{equation*}
H^1_m(A_T,\dG_m):=\mathrm{ker}\Big(H^1(A_T,\dG_m)\xrightarrow{m^*-\mathrm{pr}_1^*-\mathrm{pr}_2^*} H^1((A\times A)_T,\dG_m)\Big).
\end{equation*}
    The group space representing this presheaf is called the dual, denoted by $A^\vee$, if it exists.
\end{definition}

By \cite[Thm.\ 3.1]{RibeiroRosengarten2025Extensions}, $\mathrm{Pic}_A^{\tau}$ is a sheaf on the étale site, and there is a natural isomorphism
\begin{equation}\label{eq:dual_ext_picard}
    \mathrm{Pic}_A^{\tau}(T)\cong \mathrm{Ext}^1\hspace{-0.2mm}(A_T, \dG_{m,T}).
\end{equation}
Note that every extension in the right-hand side comes equipped with a canonical rigidification along $e_{A_T}$.
By \cite[Cor.\ 7.6.5]{Luetkebohmert2016RigidGeometryCurves}, the dual is representable in the classical case $K^+=K^\circ$.

The next result complements the discussion of abeloid varieties over $\SKKp$ in \cite[section 4.3]{Gerth2026}. Note that Raynaud uniformizations in loc.\ cit.\ require the abelian quotient to have good reduction, which differs from our setting.

\begin{corollary}\label{c:dual_exists}
    Let $A\to \SKKp$ be an abeloid variety. The dual $A^\vee\to \SKKp$ exists and is an abeloid variety.
\end{corollary}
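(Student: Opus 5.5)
The idea is to lift the classical dual of $A^\circ$ to $\SKKp$ and then compare Picard functors. By \cite[Cor.\ 7.6.5]{Luetkebohmert2016RigidGeometryCurves}, the dual $(A^\circ)^\vee$ exists over $\SKKc$ and is an abeloid variety. By \cref{l:abeloidEquivalence}, it lifts uniquely to an abeloid variety $A'\to\SKKp$ with $(A')^\circ\cong (A^\circ)^\vee$. It then remains to show that $A'$ represents $\mathrm{Pic}_A^{\tau}$ on adic spaces $T$ locally of topologically finite type over $\SKKp$.

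Via \cref{eq:dual_ext_picard}, I would work with $\mathrm{Ext}^1(A_T,\dG_{m,T})$ instead of line bundles. First I would generalize \cref{l:Ext_abeloid_circ_iso} to an arbitrary base $T$: the restriction map
\begin{equation*}
\mathrm{Ext}^1(A_T,\dG_{m,T})\longrightarrow \mathrm{Ext}^1(A_{T^\circ},\dG_{m,T^\circ})
\end{equation*}
should be an isomorphism. The argument of \cref{l:Ext_abeloid_circ_iso} goes through. The identification $\mathrm{Ext}^1\cong H^1_m$ from \cite[Thm.\ 3.1]{RibeiroRosengarten2025Extensions} needs the vanishing of pointed morphisms $A_T^i\to\dG_m$. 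The comparison $H^1(A_T,\dG_m)\cong H^1(A_{T^\circ},\dG_m)$ (and likewise on $(A\times A)_T$) follows from \cref{r:cohomology_comparison} applied to $j\colon A_T^\circ\to A_T$. Indeed, $\dG_m$ is partially proper over $A_T$, so $j_*\dG_m=\dG_m$ by \cref{p:valuative_strong_criterion}, and these isomorphisms are compatible with $m^*-\mathrm{pr}_1^*-\mathrm{pr}_2^*$. The required vanishing of pointed morphisms $A_T^i\to\dG_m$ I would check over $T^\circ$ via \cref{p:valuative_strong_criterion}; over $T^\circ$ it is the classical statement (rigidity for proper connected fibers) used in the rank one theory.

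Next, on the representing side, $A'$ is proper and hence partially proper over $\SKKp$. So \cref{p:valuative_strong_criterion} gives $\mathrm{Mor}_{\SKKp}(T,A')=\mathrm{Mor}_{\SKKc}(T^\circ,(A^\circ)^\vee)$. Combining this with the classical representability $\mathrm{Mor}(T^\circ,(A^\circ)^\vee)\cong \mathrm{Pic}^\tau_{A^\circ}(T^\circ)$ (note $T^\circ$ is locally of topologically finite type over $\SKKc$) and the isomorphism from the previous step yields a bijection $\mathrm{Mor}(T,A')\cong\mathrm{Pic}^\tau_A(T)$. Each identification is natural in $T$, since base change and $(-)^\circ$ are functorial, so $A'$ represents $\mathrm{Pic}_A^\tau$ and $A^\vee=A'$ is an abeloid variety.

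I expect the main obstacle to be the relative version of \cref{l:Ext_abeloid_circ_iso}. One has to make sure that the hypotheses of \cite[Thm.\ 3.1]{RibeiroRosengarten2025Extensions} hold over a general base $T$, and not just over the field. One also has to check carefully that the cohomology comparison of \cref{r:cohomology_comparison} applies to $A_T^\circ\subseteq A_T$, i.e.\ that $j$ is quasi-compact and every point of $A_T$ specializes from $A_T^\circ$ (\cref{l:circ_embedding}). If the vanishing of pointed morphisms to $\dG_m$ over general $T$ proves delicate, I would fall back on working étale-locally on $T$ and use that $\mathrm{Pic}^\tau_A$ is an étale sheaf.
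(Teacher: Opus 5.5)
Your proposal is correct and follows essentially the paper's argument: lift $(A^\circ)^\vee$ via \cref{l:abeloidEquivalence}, identify $\mathrm{Pic}^\tau_A(T)$ with $\mathrm{Pic}^\tau_{A^\circ}(T^\circ)$ by comparing $H^1(-,\dG_m)$ on $A_T$ and $(A\times A)_T$ with their $\SKKc$-fibers, and conclude with \cref{p:valuative_strong_criterion}. The detour through $\mathrm{Ext}^1$ is unnecessary: $\mathrm{Pic}^\tau_A$ is defined via $H^1_m$, so the comparison $\mathrm{Pic}(X)\cong\mathrm{Pic}(X^\circ)$ suffices (the paper gets it from \cref{l:circ_embedding}, you from \cref{r:cohomology_comparison}), and the vanishing of pointed morphisms to $\dG_m$ over a general base $T$ never has to be checked.
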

\begin{proof}
    {Let $((A^\circ)^\vee)^{c}$ be the unique extension of $(A^\circ)^\vee$ to an abeloid variety over $\SKKp$ by \cref{l:abeloidEquivalence}}. It is a consequence of \cref{l:circ_embedding} that $\mathrm{Pic}(X)\cong \mathrm{Pic}(X^\circ)$ are naturally isomorphic for every adic space $X\to \SKKp$. In particular, $\mathrm{Pic}_A^{\tau}$ is the pushforward of $\mathrm{Pic}_{A^\circ}^{\tau}$ along $\SKKc\to \SKKp$, and is therefore represented by $((A^\circ)^\vee)^{c}$ by \cref{p:valuative_strong_criterion}.
\end{proof}

One can lift the relation between the uniformization cross of $A^\vee$ and that of $A$ from the $\SKKc$-fiber \cite[Prop.\ 6.1.9, Thm 6.3.3, Prop.\ 6.4.1]{Luetkebohmert2016RigidGeometryCurves} to $\SKKp$ by using \cref{l:circ_embedding,l:Ext_abeloid_circ_iso,p:valuative_strong_criterion}.

\section{N\'eron models over general valuation rings}\label{s:5}

We use the term Néron model without requiring quasi-compactness over the base. Our definition of Néron model thus agrees with the notion of a Néron lft-model in \cite{BoschLuetkebohmertRaynaud1990NeronModels}. Such a theory of N\'eron lft-models was initiated in \cite[Chapter 10]{BoschLuetkebohmertRaynaud1990NeronModels}.

\begin{definition}\label{d:Neron_model_V}
	Let $S$ be a normal integral scheme with generic point $\eta$. Let $G_\eta\to S_\eta$ be a commutative group scheme.
    \begin{enumerate}[leftmargin=3em]
	    \item A Néron model for $G_\eta$ is a smooth separated scheme $G\to S$ with generic fiber $G_\eta$ satisfying the Néron mapping property $G(X)\cong G_\eta(X_\eta)$ for all smooth
        $S$-schemes $X$. 
        \item A marked N\'eron model for $G_\eta$ is a smooth separated scheme $G\to S$ with connected fibers and generic fiber $G_\eta$, together with a section extending the unit section of $G_\eta$, satisfying the following universal property: for every smooth, 
        quasi-separated scheme with connected fibers $X\to S$
        together with a section $S\to X$, the base change map between sets of pointed morphisms $\mathrm{Mor}^\mathrm{pt}(X,G)\to \mathrm{Mor}^\mathrm{pt}(X_\eta, G_\eta)$ is an isomorphism (where pointed means that the fixed section is mapped to the unit section).
	\end{enumerate}
\end{definition}

A (marked) Néron model is unique {up to unique isomorphism} if it exists. In both cases the mapping property induces a group structure on $G$, making it a group model of $G_\eta$. Moreover, observe that if $G_\eta$ {is connected and} admits a Néron model, then its fiberwise identity component is the marked Néron model.

Our first task is to show that semi-abelian models are in fact marked Néron models. Recall that a smooth commutative group scheme $E\to S$ is \textit{semi-abelian} if all fibers over $S$ are extensions of abelian varieties by  tori. 

We often use the fact that a morphism of schemes with connected fibers which admits a section has geometrically connected fibers \spcite{04KV}. In particular, semi-abelian schemes have geometrically connected fibers.  Moreover, every semi-abelian scheme is separated and quasi-compact over the base, since it is smooth with geometrically connected fibers (see, e.g.,  \cite[Exp.\ VI$_B$, Cor.\ 5.5]{DemazureGrothendieck1970SGA3IVB}).
In order to descend semi-abelian schemes we  need the following lemma.

\begin{lemma}\label{l:semi-abelian_approximation}
	Let $S$ be a quasi-compact and quasi-separated scheme and let $E\to S$ be a semi-abelian scheme. Let $(S_i, f_{ij})$ be an inverse system over a directed index set $I$ of quasi-compact and quasi-separated schemes with affine transition maps and $\varprojlim_{i \in I} S_i=S$. Then for all sufficiently large $j\in I$ there exists a semi-abelian scheme $E_j\to S_j$ such that $E_j\times_{S_j} S=E$.
\end{lemma}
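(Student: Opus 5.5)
We argue by standard limit (noetherian approximation) techniques, following \spcite{01ZM,081C,081F} and the limit results for properties of morphisms. Since $E\to S$ is smooth, separated and of finite presentation (it is quasi-compact and separated by the remarks above, and smooth hence locally of finite presentation), there exist, for some $i\in I$, a scheme $E_i\to S_i$ of finite presentation with $E_i\times_{S_i}S\cong E$. After enlarging $i$ we may assume $E_i\to S_i$ is smooth and separated. Next we descend the group structure: the multiplication $m\colon E\times_S E\to E$, the inverse and the unit section are morphisms between finitely presented $S$-schemes, so by \spcite{01ZM} they descend to $E_j\to S_j$ for $j\ge i$ large. The group axioms (associativity, unit, inverse, commutativity) are equalities of morphisms between finitely presented schemes that hold after base change to $S$, so they hold over some larger $j$. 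Thus we obtain a smooth separated commutative group scheme $E_j\to S_j$ of finite presentation with $E_j\times_{S_j}S=E$.

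The main step is to arrange that every fiber of $E_j\to S_j$ is semi-abelian, i.e.\ connected and an extension of an abelian variety by a torus. Fiber connectedness: the locus of $s\in S_j$ where the fiber of a smooth finitely presented group is connected is constructible-open-type data; more concretely, the fiberwise identity component $E_j^0$ is an open subgroup scheme (SGA3 VI$_B$) and $E_j^0=E_j$ holds after base change to $S$. I would use that the set of points of $S_j$ where the fiber is semi-abelian is constructible (the torus rank and abelian rank are constructible functions, e.g.\ via the characterization by the connected unipotent part vanishing, or by SGA3 IX / the semi-abelian criterion that $E_s$ is semi-abelian iff $E_s$ is connected and its $\ell$-torsion has the right size). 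Then, since the image of $S\to S_j$ lies in this constructible set $C_j$, and $S=\varprojlim S_i$ with quasi-compact spectral spaces, a standard argument (\spcite{01Z4}/\spcite{0EUU}: if a constructible set contains the image of the limit, it contains the image of some $S_{j'}$) shows that after enlarging $j$ the pullback of $C_j$ to $S_{j'}$ is everything.

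The hardest point is justifying constructibility of the semi-abelian locus. I would try the following: for a smooth commutative group scheme with connected fibers, $E_s$ is semi-abelian iff the multiplication by a fixed integer $n$ invertible at $s$ is finite étale of degree $n^{2g+r}$ with $g+r=\dim$ compatible with unipotent rank zero; more robustly, I would cite the fact (SGA3 Exp.\ X, or \cite[Thm.\ 7.6.? ]{BoschLuetkebohmertRaynaud1990NeronModels}, \S 7.3) that the semi-abelian locus of a smooth commutative finitely presented group scheme with connected fibers is open — indeed constructible and stable under generization suffices. Given that, the limit argument above finishes the proof, and uniqueness/compatibility for all larger $j$ follows by base change since semi-abelian schemes are stable under base change.
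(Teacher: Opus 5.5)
Your outline matches the paper's. You descend $E$ to a smooth, separated, commutative group scheme over some stage, show that the locus of points with semi-abelian fibre is a set the limit argument can handle, and conclude because $S$ maps into it. The descent of the group structure and the final limit step are fine.

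The gap is exactly the step you flag as the hardest: you never prove that the semi-abelian locus is constructible (or open), and the concrete criteria you suggest cannot work. Over an algebraically closed field, a smooth connected commutative group with abelian, toric and unipotent ranks $a,t,u$ has $\ell^{2a+t}$ points of order $\ell$ (for $\ell$ invertible). So torsion counts and degrees of $[n]$ cannot see $u$. For instance, $\mathbb{G}_m^2$ and $C\times\mathbb{G}_a$, with $C$ an elliptic curve, have the same dimension and the same number of $\ell$-torsion points, and in characteristic $0$ the same degree of $[n]$. Only the first is semi-abelian; in characteristic $0$, $[n]$ is even an isomorphism on vector groups. Your fallback of citing an openness theorem from SGA~3 or \cite[\S 7.3]{BoschLuetkebohmertRaynaud1990NeronModels} has no precise reference, so as written the key step is unproved. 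Note also that an open locus need not be constructible in a non-Noetherian $S_j$. You would have to shrink to a quasi-compact open containing the image of $S$, or pass to a Noetherian approximation.

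The paper supplies the missing step by a direct spreading-out argument, and it only needs \emph{ind}-constructibility. It first replaces the system by a Noetherian approximation $(S'_i)$ of finite type over $\Spec(\mathbb{Z})$ (\spcite{01ZA}). It then descends $E$ to a smooth separated commutative group over $S'_{i_0}$ and lets $M_i\subseteq S'_i$ be the semi-abelian locus. For $s\in M_i$, write $0\to T_s\to E_s\to A_s\to 0$ and choose an fppf cover of $\Spec(\kappa(s))$ splitting $T_s$. The extension, the cover and the splitting spread out over a nonempty open of $\overline{\{s\}}_{\mathrm{red}}$ (\spcite{05FI,081F}). This gives a constructible subset of $M_i$ containing $s$, so $M_i$ is a union of constructible sets. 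Since $f'^{-1}_{ii'}(M_{i'})=M_i$ and the image of $S$ lies in each $M_i$, \cite[IV$_3$, Cor.\ 8.3.4]{GrothendieckDieudonne1966EGAIV} gives a stage where all fibres are semi-abelian. The original system is then handled by factoring $S\to S'_{j_0}$ through some $S_{i_0}$ via \spcite{01ZC}. That spreading-out argument is the idea your proposal is missing.
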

\begin{proof}
    Let us first consider an inverse system
     $(S'_{i}, f'_{ij})$ 
     over a directed index set $I'$ 
     consisting
     of finite type, quasi-compact quasi-separated schemes over $\Spec(\dZ)$ with affine transition maps and $\varprojlim_{I'} S'_i=S$. Such an inverse system exists by \spcite{01ZA}.
    
    By \spcite{01ZL,081C} there exists $i_0\in I'$ such that $E$ descends to a smooth separated commutative group scheme $E_{i_0}\to S'_{i_0}$. For each $i\geq i_0$ let $M_i\subseteq S'_{i}$ be the subset of $s\in S'_{i}$ such that $E_{{i},s}\to \Spec(\kappa(s))$ is semi-abelian.
    
    We claim that this is a union of constructible subsets of $S'_{i}$. Indeed, if $s\in M_i$, then the extension over $\kappa(s)$ spreads out over an open subspace of $\overline{\{s\}}_{\mathrm{red}}$. To see this, let $0\to T_{s}\to E_s\to A_s\to 0$ be an extension over $\kappa(s)=\cO_{\overline{\{s\}}_{\mathrm{red}},s}$ where $T_s$ is a torus and $A_s$ is an abelian variety. Let $s'\to s$ be an fppf cover that splits $T_s$. Writing $\Spec(\cO_{\overline{\{s\}}_{\mathrm{red}},s})=\varprojlim_{U\subseteq \overline{\{s\}}_{\mathrm{red}}} U$ ranging over nonempty affine opens, it follows from \spcite{05FI,081F} that the extension, the fppf-map, and the splitting descend for a sufficiently small $U$. This shows the claim.
    
    Moreover, $f_{ii'}'^{-1}(M_{i'})=M_i$ for $i\geq i'\geq i_0$. Similarly, the image of $S$ is contained in $M_i$. Hence the statement
    for the system $(S'_{i}, f'_{ij})$ follows from \cite[IV$_3$, Cor.\ 8.3.4]{GrothendieckDieudonne1966EGAIV}.

    To deduce the lemma for {a general} system $(S_i, f_{ij})$, let $(S'_{i}, f'_{ij})$ be as above. Let $j_0\in I'$ be such that $E_{j_0}\to S_{j_0}'$ is semi-abelian. Note that $\Mor(S, S'_{j_0})=\varinjlim_{i\in I}\Mor(S_i, S_{j_0}')$ by \spcite{01ZC} as $S_{j_0}'$ is finitely presented. Hence there exists an $i_0\in I$ and a factorization $S\to S_{i_0}\to S'_{j_0}$. The statement follows by forming the pullback of the semi-abelian scheme over $S'_{j_0}$ to $S_{i_0}$.
\end{proof}

We  will often use the fact that over every integral scheme $S$ with generic point $\eta$, the map $\mathrm{Mor}(X,Y)\to \mathrm{Mor}(X_\eta, Y_\eta)$ is injective whenever
$X$ is flat over $S$ and $Y$ is separated over $S$. 

\begin{proposition}\label{p:semi-abelianAreNeron}
    Let $S$ be a normal integral scheme with generic point $\eta$. Let $E\to S$ be a semi-abelian scheme. Then $E$ is the marked Néron model of $E_\eta$.
\end{proposition}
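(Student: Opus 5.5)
The plan is to prove uniqueness and existence of extensions separately, after reducing to the case where $S$ is Noetherian.

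\emph{Uniqueness.} Let $X\to S$ be smooth, quasi-separated, with connected fibers and a section. Then $X$ is flat over $S$, and $E$ is separated over $S$ because it is semi-abelian. So the injectivity of $\mathrm{Mor}(X,E)\to\mathrm{Mor}(X_\eta,E_\eta)$ recalled just before the proposition gives uniqueness. It remains to extend a pointed morphism $f_\eta\colon X_\eta\to E_\eta$ to all of $X$.

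\emph{Reduction to a Noetherian base.} By uniqueness the extension glues, so I may assume $S$ is affine and $X$ is quasi-compact. I would write $S=\varprojlim S_i$ as a filtered limit of normal integral schemes of finite type over $\dZ$. Normality can be arranged because normalizations of finite type $\dZ$-schemes are again of finite type, $\dZ$ being excellent. By \cref{l:semi-abelian_approximation}, $E$ descends to a semi-abelian scheme $E_i\to S_i$. By \spcite{01ZL,081C,081F}, $X$ and its section descend to a smooth scheme with connected fibers over some $S_i$, after possibly shrinking to the locus of connected fibers, which is harmless. The morphism $f_\eta$ is defined over the generic fiber, which is a limit of the generic fibers of the $S_i$, so after enlarging $i$ it descends as well. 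Hence I may assume $S$ is Noetherian normal integral. Then $X$ is Noetherian and normal, being smooth over a normal scheme.

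\emph{Extension in codimension $\le 1$.} Regard $f_\eta$ as an $S$-rational map $X\dashrightarrow E$. Let $x\in X$ be a point of codimension one not lying over $\eta$. Since $X\to S$ is flat with connected (hence irreducible, by smoothness) fibers, $x$ is the generic point of the fiber $X_s$ over a codimension one point $s\in S$. Base change to the discrete valuation ring $R=\cO_{S,s}$. Over $R$, a semi-abelian scheme is the identity component of the Néron lft-model $N$ of $E_\eta$, by \cite[Chapter 10]{BoschLuetkebohmertRaynaud1990NeronModels}.
\begin{itemize}
\item By the Néron mapping property, $f_\eta$ extends to $X_R\to N$.
\item The given section is sent to the unit section, and the special fiber $X_s$ is connected.
\item Hence the image of $X_s$ lies in the identity component $N^0=E_R$.
\end{itemize}
So $f_\eta$ is defined at $x$. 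Therefore the rational map $X\dashrightarrow E$ is defined at all points of codimension $\le 1$.

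\emph{Extension everywhere and the main obstacle.} Weil's extension theorem \cite[4.4/1]{BoschLuetkebohmertRaynaud1990NeronModels} says that an $S$-rational map from a smooth $S$-scheme to a smooth separated $S$-group scheme that is defined in codimension $\le 1$ extends to a morphism. Applying it gives $f\colon X\to E$, which is pointed by uniqueness. The main obstacle I expect is the codimension-one step: identifying a semi-abelian scheme over a discrete valuation ring with the identity component of the Néron lft-model. If that identification is not available in the required generality, I would instead argue with the closure of the graph and the semi-abelian structure, for example via the torus part and the Raynaud extension over the completed discrete valuation ring. The limit reduction preserving normality also needs care, but it looks routine.
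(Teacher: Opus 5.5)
Your uniqueness argument, the Noetherian reduction and the codimension-one step are all fine. The codimension-one step is exactly what the paper does: extend the torus part by \cite[Prop.\ I.2.9]{FaltingsChai1990Degeneration}, identify the abelian quotient with the identity component of the N\'eron model by \cite[Prop.\ 7.4.3]{BoschLuetkebohmertRaynaud1990NeronModels}, and conclude as in the proof of \cite[Prop.\ 10.1.7]{BoschLuetkebohmertRaynaud1990NeronModels}. So the point you flag as the main obstacle is not one.

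The genuine gap is in the final step. Weil's extension theorem \cite[Thm.\ 4.4.1]{BoschLuetkebohmertRaynaud1990NeronModels} is a statement about $S$-\emph{rational} maps, i.e.\ maps defined on an $S$-dense open subscheme, one that meets every fiber $X_s$ in a dense open. Your map is not of this kind. The morphism $f_\eta$ lives only on $X_\eta$, which is not even open in $X$ until you spread it out. After your codimension-one step it is defined on $X_U$ with $U\subseteq S$ containing the points of codimension $\le 1$, and this open misses the fibers over all points of $S$ of codimension $\ge 2$ entirely. For such domains the principle ``defined in codimension $\le 1$, hence everywhere'' is false. Let $S$ be the affine cone over a projectively normally embedded elliptic curve $C$, $X=S$, and take the section of $C\times S$ over the punctured cone given by projection to $C$. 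It is defined in codimension $\le 1$ but does not extend over the vertex. Your final step never uses that $f_\eta$ is pointed, so it would apply verbatim to this example.

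The missing idea is to use the section a second time, at the points of codimension $\ge 2$. The paper invokes \cite[IX 1.1]{Raynaud1970FaisceauxAmples}, using that depth-one points of a normal Noetherian scheme have codimension one. This shows that the rational map, already extended over the codimension-one points of the base, is defined on an open neighbourhood $W$ of the section. Every fiber of $X$ is irreducible and meets the section, so $W$ is $S$-dense. Glue the map on $W$ with the extension on $X_U$; the two agree because $E$ is separated and the generic fiber is schematically dense in the flat scheme $W\cap X_U$. This gives an honest $S$-rational map. It is defined at all points of codimension $\le 1$ of $X$, since by flatness these lie over points of codimension $\le 1$ of $S$. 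Only now does Weil's theorem apply and produce $X\to E$.
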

\begin{proof}
    Suppose that $X$ is a smooth,
    quasi-separated
    $S$-scheme with connected fibers together with a section
    $x\in X(S)$. We need to show that $\mathrm{Mor}^\mathrm{pt}(X,E)\to \mathrm{Mor}^\mathrm{pt}(X_\eta, E_\eta)$ is bijective. Since this statement is local on $S$, we may assume that $S=\Spec(R)$ is affine. The injectivity of the map follows from the flatness of $X$ and the separatedness of $E$.
    
    To show surjectivity, let a pointed morphism $f_\eta\colon X_\eta\to E_\eta$ and $x_0\in X$ be given. We will show that $f_\eta$ can be extended to a neighborhood of $x_0$. Denote the image of $x_0$ by $s_0\in S$. Let $W\subseteq X$ be a quasi-compact neighborhood of the image $x(s_0)$ of our section $x$ and of $x_0$. Then $W$ is quasi-separated as $X$ is quasi-separated. Note that $x$ restricts to a section $x'\colon \Spec(\cO_{S,s_0})\to {W} \times_S \Spec(\cO_{S,s_0})=: X'$. Moreover, $X'$ has connected fibers since $X$ has integral fibers.
    
    We claim that the restriction of $f_\eta$ to $X'_\eta$ extends to a morphism $X'\to E\times_S \Spec(\cO_{S,s_0})=:E'$. This implies our statement since we can approximate $\Spec(\cO_{S,s_0})$ by affine open neighborhoods of $s_0$ and use \spcite{01ZL}. Since $\cO_{S,s_0}$ is the filtered union of Noetherian normal subrings, $X',x'$ and $E'$ descend with good properties (by, e.g., \cref{l:semi-abelian_approximation}, \spcite{01ZL,081C,05FI}) to a normal integral Noetherian base $S_0$ with generic point $\eta_0$. We denote the descended objects with a subindex $(-)_0$. The generic morphism $f_\eta$ may be assumed to descend as a pointed morphism $f_{\eta_0}\colon X'_{0,\eta_0}\to E'_{0,\eta_0}$.
   
    Let $s\in S_0$ be a point of codimension $1$. We show that there exists an open $U \subseteq S_0$ with $s\in U$ such that $f_{\eta_0}$ extends to a morphism $X'_{0,U}\to E'_{0,U}$. Let $T'_{0,\eta_0}$ be the toric part of $E'_{0,\eta_0}$, and let $A'_{0,\eta_0}$ be its abelian part. By \cite[Prop.\ I.2.9]{FaltingsChai1990Degeneration}, $T'_{0,\eta_0}$ extends to a closed subtorus $T'_0\to E'_0$. Over $\Spec({\cO_{S_0,s}})$, the quotient $A'_{0,\cO_{S_0,s}}:=E'_{0,\cO_{S_0,s}}/T'_{0,\cO_{S_0,s}}$ is a semi-abelian scheme. Hence, by \cite[Prop.\ 7.4.3]{BoschLuetkebohmertRaynaud1990NeronModels}, it is the fiberwise identity component of the Néron model of $A'_{0,\eta_0}$ over $\Spec({\cO_{S_0,s}})$. Therefore, as seen in the proof of \cite[Prop.\ 10.1.7]{BoschLuetkebohmertRaynaud1990NeronModels}, $E'_{0,\eta_0}$ admits a  {(lft-)}Néron model over $\Spec({\cO_{S_0,s}})$ whose fiberwise identity component is $E'_{0,\cO_{S_0,s}}$. In particular, $f_{\eta_0}$ extends to a morphism $X'_{0,\cO_{S_0,s}}\to E'_{0,\cO_{S_0,s}}$. Hence we may assume that $U$ contains all codimension $1$ points of $S_0$.
    
    By \cite[IX 1.1]{Raynaud1970FaisceauxAmples}, the rational map $f_0\colon X'_0\dashrightarrow E'_0$ is defined in a neighborhood of the section $S_0\to X'_0$, as depth one points in the normal Noetherian scheme $S_0$ have codimension one. Therefore, since $X'_{0,U}$ contains the codimension $\leq 1$ points of $X'_0$ by flatness, Weil's extension theorem \cite[Thm.\ 4.4.1]{BoschLuetkebohmertRaynaud1990NeronModels} provides an extension $X'_0\to E'_0$. This shows the claim.
\end{proof}

\begin{remark}
    The preceding proposition can be viewed as a generalization of \cite[Prop.\ I.2.7]{FaltingsChai1990Degeneration} which states that rational homomorphisms between two semi-abelian schemes over a Noetherian normal base are defined everywhere.
\end{remark}

\begin{lemma}\label{l:semi-abelian_ext}
	Let $S:=\Spec(V)$ where $V$ is either a valuation ring with separably closed field of fractions or a regular local ring. Denote the generic point by $\eta$. Suppose that $E\to S$ is a semi-abelian scheme whose generic fiber is an abelian variety. Then the natural map
    \begin{equation*}
        \mathrm{Ext}^1\hspace{-0.2mm}(E, \dG_{m,S})\isomlongarrow \mathrm{Ext}^1\hspace{-0.2mm}(E_\eta, \dG_{m,S_\eta})
    \end{equation*}
    is an isomorphism.
\end{lemma}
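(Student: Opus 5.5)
The plan is to interpret $\mathrm{Ext}^1$ as rigidified translation-invariant $\dG_m$-torsors, and to use \cref{p:semi-abelianAreNeron} for injectivity and an extension-of-divisors argument for surjectivity. Both kinds of base $S$ in the statement are normal, integral and local, so \cref{p:semi-abelianAreNeron} applies.

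\emph{Step 1: extensions are semi-abelian.} Let $0\to \dG_{m,S}\to X\to E\to 0$ be an extension. I claim $X$ is a semi-abelian scheme. It is smooth, being an fppf $\dG_m$-torsor over a smooth $S$-scheme. For $s\in S$, let $T_s\subseteq E_s$ be the toric part. The preimage of $T_s$ in $X_s$ is an extension of a torus by $\dG_m$, and such an extension is again a torus, since it is commutative, smooth, connected and of multiplicative type. The quotient of $X_s$ by this preimage is the abelian part of $E_s$.

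\emph{Step 2: injectivity.} Suppose the extension $X$ splits generically, i.e.\ there is a homomorphic section $\sigma_\eta\colon E_\eta\to X_\eta$. By Step~1 and \cref{p:semi-abelianAreNeron}, $X$ is the marked Néron model of $X_\eta$. Since $E$ is smooth with connected fibers and has the unit section, $\sigma_\eta$ extends uniquely to a pointed morphism $\sigma\colon E\to X$. Two facts follow by comparing generic fibers, using that $E$ and $E\times_S E$ are flat and $X$, $E$ are separated:
\begin{enumerate}
\item $\sigma$ is a homomorphism;
\item $\sigma$ is a section of $X\to E$.
\end{enumerate}
So the extension splits over $S$. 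The same flatness/separatedness argument shows that morphisms of extensions are determined by their generic fibers. This gives injectivity.

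\emph{Step 3: surjectivity.} Since $E_\eta$ is an abelian variety, an extension of $E_\eta$ by $\dG_m$ is the same as a rigidified line bundle $L_\eta$ on $E_\eta$ satisfying
\begin{equation*}
m^*L_\eta\cong \mathrm{pr}_1^*L_\eta\otimes \mathrm{pr}_2^*L_\eta
\end{equation*}
(cf.\ \cref{eq:dual_ext_picard}). The plan has three parts.

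\begin{enumerate}
\item \emph{Extend $L_\eta$ to a line bundle $L$ on $E$.}
\begin{itemize}
\item Regular local case: $E$ is regular, so I write $L_\eta=\cO(D_\eta)$ and take the closure of $D_\eta$ as a Weil, hence Cartier, divisor.
\item Valuation ring case: write $L_\eta$ as a difference of effective divisors and take schematic closures in $E$. These closures are $V$-torsion free, hence flat, and of finite presentation (\spcite{05IN}). Their fibers are of pure codimension one in the smooth fibers of $E$. The fiberwise criterion for relative effective Cartier divisors then makes them relative effective Cartier divisors.
\end{itemize}
I expect this part to be the main obstacle. In particular, checking that the closures have no embedded components in the special fibers needs care, and this is where I would also allow a finite separable extension of the fraction field. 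The statement assumes the fraction field is separably closed, which makes such an extension harmless; it can also be used to split the tori and make the divisors geometric.

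\item \emph{Rigidify and check the theorem-of-the-square identity over $S$.} I twist $L$ by the pullback of $e^*L$, which is trivial since $S$ is local. Consider the line bundle
\begin{equation*}
N:=m^*L\otimes \mathrm{pr}_1^*L^{-1}\otimes\mathrm{pr}_2^*L^{-1}
\end{equation*}
on $E\times_S E$. It is rigidified and generically trivial. Since the fibers of $E\times_S E$ over $S$ are irreducible and $S$ is local, a generically trivial line bundle is $\cO(D)$ with $D$ a vertical divisor. Such a $D$ is a pullback from $S$ and hence trivial. One has to be careful with non-Noetherian valuation rings here; I would argue via rational sections and torsion-freeness rather than via Weil divisors. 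So $N$ is trivial.

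\item \emph{Obtain the extension structure.} The isomorphism $m^*L\cong\mathrm{pr}_1^*L\otimes\mathrm{pr}_2^*L$ makes the $\dG_m$-torsor of $L$ into a group scheme extension, once the associativity and commutativity cocycle conditions hold. These conditions are identities between morphisms out of flat $S$-schemes into separated ones, so they can be checked on the generic fiber, where they hold. The resulting extension restricts to the given one over $\eta$, which proves surjectivity.
\end{enumerate}
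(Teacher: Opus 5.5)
Your Steps 1 and 2 are fine. Deducing injectivity from \cref{p:semi-abelianAreNeron} is a valid alternative to the paper's route. The paper instead gets injectivity from $\mathrm{Pic}(E)\hookrightarrow\mathrm{Pic}(E_\eta)$ and $\mathrm{Pic}(E^2)\hookrightarrow\mathrm{Pic}(E^2_\eta)$ using $\mathrm{Pic}(S)=0$, and then identifies $\mathrm{Ext}^1$ with $H^1_m$ via Ribeiro--Rosengarten, which is what your Step 3(3) does by hand. Your regular local case of Step 3 is also fine.

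The gap is surjectivity when $V$ is a non-Noetherian valuation ring.
\begin{itemize}
\item The fiberwise criterion for relative effective Cartier divisors requires each fiber $\overline{D}_s$ to be an effective Cartier divisor in $E_s$. Knowing that $\overline{D}_s$ has pure codimension one controls only its minimal primes, not embedded ones.
\item Excluding embedded points amounts to the following: for the local rings $A$ of $E$, every ideal of the form $IA_K\cap A$ with $IA_K$ invertible is again invertible. In other words, line bundles must extend from $E_\eta$ across a smooth scheme over a non-Noetherian valuation ring. This is exactly the non-formal input the lemma needs, and your sketch does not supply it.
\item Your proposed remedy is empty. A separably closed field has no nontrivial finite separable extensions, and enlarging the base would in any case only show that the class extends after base change.
\item Step 3(2) has the same problem in the valuation ring case. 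Vertical divisors are not available over a non-Noetherian base (the special fiber is not Cartier), and the replacement argument is only gestured at.
\end{itemize}
Tellingly, your argument never uses that $\Frac(V)$ is separably closed.

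The paper avoids divisor theory over $V$ altogether. Citing \cite[Prop.\ 4.2.1, Cor.\ 4.2.4]{AntieauDatta2021ValuationRingsDerivedSplinters}, it writes $V$ as a filtered colimit of regular subrings $V_i$. Localizing at the contraction of $\mathfrak{m}_V$ makes these local. By \cref{l:semi-abelian_approximation} and \spcite{01ZL,081F}, the scheme $E$, its extensions by $\dG_m$, and their splittings all descend to some $V_i$. Both $\mathrm{Ext}^1$ groups are then colimits of the corresponding groups over $V_i$ and $\Frac(V_i)$. This reduces the lemma to the regular local case, where your own arguments work: Weil divisors are Cartier on the regular scheme $E$, and vertical divisors on $E\times_S E$ are pulled back from the factorial $S$. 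Inserting this reduction in place of your direct valuation ring arguments repairs the proof, and it is precisely where the separably closed hypothesis enters.
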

\begin{proof}
	By \cite[Prop.\ 4.2.1, Cor.\ 4.2.4]{AntieauDatta2021ValuationRingsDerivedSplinters}, $V$ is a filtered colimit of regular subrings $V_i$. By localizing each $V_i$ at the preimage of the maximal ideal in $V$, we may further assume that each $V_i$ is local.
   Together with
    \cref{l:semi-abelian_approximation} and \spcite{01ZL,081F}, it is straightforward to reduce the proof to the case where $V$ is a regular local ring.

   Since $\mathrm{Pic}(S)=0$, we find that $\mathrm{Pic}(E)\to \mathrm{Pic}(E_{\eta})$ is injective by \spcite{0BD7}. On the other hand, the regularity of $S$ shows that the map is surjective, see \spcite{0BD9,0B8W}. The same argument applies to $E^2= E\times_S E$. 
    It follows that $H^1_m(E, \dG_m)\to H^1_m(E_{\eta}, \dG_m)$ is an isomorphism, where $H^1_m$ is defined as in \cref{d:dual_abeloid}.
    
    The natural maps $\mathrm{Ext}^1\hspace{-0.2mm}(E, \dG_{m,S})\to H^1_{m}(E, \dG_m)$ and $\mathrm{Ext}^1\hspace{-0.2mm}(E_\eta, \dG_{m,S_\eta})\to H^1_{m}(E_\eta, \dG_{m})$ are isomorphisms. Indeed, by the proof of \cite[Thm.\ 3.1]{RibeiroRosengarten2025Extensions}, it suffices to verify that $\mathrm{Mor}_{e}(E^i,\dG_{m,S})= \mathrm{Mor}_{e}(E^i_{\eta},\dG_{m,\eta})=0$ for $i=1,2,3$. This is true as $E_\eta $ is an abelian variety. The statement follows.
\end{proof}

\begin{lemma}\label{l:semi-abelian-model}
    Let $V$ be a valuation ring with fraction field $F$, and let $E_F\to \Spec(F)$ be a semi-abelian variety. After replacing $F$ by a finite separable field extension and $V$ by a suitable choice of a dominating valuation ring, $E_F$ extends to a semi-abelian scheme over $\Spec(V)$.
\end{lemma}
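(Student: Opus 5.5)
We need to show that, after a finite separable extension, a semi-abelian variety over $F$ extends to a semi-abelian scheme over $\Spec(V)$. The strategy is to first handle the torus and the abelian part separately, and then extend the extension class using \cref{l:semi-abelian_ext}.

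\emph{Reduction to a finitely generated situation.} Write $V$ as the filtered union of subrings $V_i$ that are finitely generated over $\dZ$, localized at the centre of $V$. Any descent of $E_F$ only involves finitely many elements of $F$.

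Rather than working with these $V_i$ directly, I would pass to a limit. Let $F^{s}$ be the separable closure of $F$, and let $V^{s}$ be a valuation ring of $F^s$ dominating $V$; such a ring exists by \spcite{00IA}. Then $V^s$ is a filtered colimit of the valuation rings $V_L := V^s\cap L$, where $L$ runs over finite separable extensions of $F$, and $\Spec(V^s)=\varprojlim \Spec(V_L)$ with affine transition maps. By \cref{l:semi-abelian_approximation}, a semi-abelian scheme over $\Spec(V^s)$ extending $E_{F^s}$ descends to some $\Spec(V_L)$. Using \spcite{01ZL} once more, we may enlarge $L$ so that its generic fiber agrees with $E_L$. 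It therefore suffices to treat the case where $F$ is separably closed.

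\emph{The case $F$ separably closed.} Over a separably closed field, the torus part $T_F$ of $E_F$ is split, say $T_F\cong\dG_{m,F}^r$. It therefore extends to the split torus $\dG_{m,V}^r$.

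For the abelian quotient $A_F$ we need a semi-abelian model over $V$. This is the main obstacle, namely a semistable reduction theorem for abelian varieties over arbitrary valuation rings. The first approach I would try is to reduce to the rank-one or Noetherian case via approximation by finitely generated subrings and then invoke Grothendieck's semistable reduction theorem (\cite{BoschLuetkebohmertRaynaud1990NeronModels}, 7.4) at every height-one stratum. The alternative is to use the known result that abelian varieties over valuation rings with algebraically closed fraction field have semi-abelian models; this can be obtained via de Jong alterations, or via Faltings–Chai and the results on semistable reduction over arbitrary valuation rings. I would make the passage to the perfect closure harmless by noting that purely inseparable extensions do not affect the valuation rings over $\Spec$ topologically. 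Granting such a semi-abelian model $\mathcal{A}$ of $A_F$ over $V$, the remaining issue is only the extension class.

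\emph{Extending the extension.} Since $F$ is separably closed, \cref{l:semi-abelian_ext} gives an isomorphism
\[
\mathrm{Ext}^1(\mathcal{A},\dG_{m,V})\cong \mathrm{Ext}^1(A_F,\dG_{m,F}).
\]
Applying this coordinatewise to $\dG_m^r$, the class of $0\to T_F\to E_F\to A_F\to 0$ lifts to an extension $0\to\dG_{m,V}^r\to\mathcal{E}\to\mathcal{A}\to 0$. The middle term $\mathcal{E}$ is representable by descent (torsors under affine group schemes), smooth, and fiberwise an extension of a semi-abelian variety by a torus; by \cite[Exp.\ VI$_B$]{DemazureGrothendieck1970SGA3IVB}-type arguments, such an extension is again semi-abelian. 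Its generic fiber is $E_F$, which proves the lemma.
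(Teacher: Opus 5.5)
Your reduction to the case of separably closed $F$ agrees with the paper's proof, and so do the treatment of the split torus and the final step that lifts the extension class via \cref{l:semi-abelian_ext}. The gap is the step you explicitly grant: the existence of a semi-abelian model of the abelian quotient $A_F$ over an arbitrary valuation ring $V$ with separably closed fraction field. This is the actual content of the lemma, and neither of your suggested routes supplies it.
\begin{itemize}
\item Grothendieck's semistable reduction theorem \cite[Section 7.4]{BoschLuetkebohmertRaynaud1990NeronModels} is a statement over discrete valuation rings. The height-one localizations of $V$ may be non-discrete of rank one, so it does not apply there. The Noetherian subrings approximating $V$ are not valuation rings, and applying the theorem at their height-one points only yields models in codimension one. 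A semi-abelian scheme given in codimension one does not in general extend over a Noetherian local base of dimension $\geq 2$ without modifying the base; handling such modifications is exactly what the properness of a compactified moduli space does.
\item Your second route cites the very statement to be proved.
\item The remark that purely inseparable extensions are topologically harmless does not let you descend a model from $V^{\mathrm{alg}}$ to $V^{\mathrm{sep}}$.
\end{itemize}

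The missing idea in the paper is the following.
\begin{itemize}
\item Use the toroidal compactification $\overline{\cA_g}$ of \cite[Thm.\ IV.5.7]{FaltingsChai1990Degeneration}. It is a proper DM stack over $\Spec(\dZ)$ over which the universal abelian scheme extends to a semi-abelian scheme.
\item By Zarhin's trick, $A_F^4\times_F (A_F^\vee)^4$ is principally polarized, so it gives an $F$-point of $\cA_g$.
\item The valuative criterion \cite[Thm.\ 7.3]{LaumonMoretBailly2000ChampsAlgebriques} extends this point over $V$, producing a semi-abelian $H\to\Spec(V)$.
\item The idempotent projection onto seven of the eight factors lifts to an idempotent $\pi$ on $H$ by \cref{p:semi-abelianAreNeron}. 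Then $H\cong\ker(\pi)\times_{\Spec(V)}\ker(1-\pi)$, and both factors are smooth (as retracts) with connected fibers.
\item Chevalley's theorem, together with the fact that their product is semi-abelian, shows that both factors are semi-abelian. Hence $\ker(\pi)$ is a semi-abelian model of $A_F$.
\end{itemize}
Without such an argument, or an equivalent semistable reduction theorem over arbitrary valuation rings, your proof is incomplete at its main step.
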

\begin{proof}
    We first consider the case where $F$ is separably closed. Let $T_F\to E_F$ be its maximal torus, and let $A_F:=E_F/T_F$ be the abelian part. Let $\overline{\cA_g}$ be a toroidal compactification of the moduli space of principally polarized abelian varieties $\cA_g$, see \cite[Thm.\ IV.5.7]{FaltingsChai1990Degeneration}. It is a proper DM-stack over $\Spec(\dZ)$, and the universal abelian scheme over $\cA_g$ extends to a semi-abelian scheme over $\overline{\cA_g}$. 
    
    By Zarhin's trick, $A_F^4\times_F( A_F^\vee)^4$ is principally polarized, and hence corresponds to a point in $\cA_g(F)$. 
    The valuative criterion \cite[Thm.\ 7.3]{LaumonMoretBailly2000ChampsAlgebriques} implies that $A_F^4\times_F( A_F^\vee)^4$ extends to a semi-abelian scheme $H\to \Spec(V)$. Denote by $\pi_F\colon A_F^4\times_F( A_F^\vee)^4\to A_F^4\times_F( A_F^\vee)^4$ the idempotent projection onto the last $7$ of its $8$ factors. By \cref{p:semi-abelianAreNeron}, $\pi_F$ lifts uniquely to an idempotent homomorphism $\pi\colon H\to H$. Then $H\cong \mathrm{ker}(\pi)\times_{\Spec(V)} \mathrm{ker}(1-\pi)$, see, e.g., \spcite{09SF}. Observe that $\mathrm{ker(\pi)}$ and $\mathrm{ker}(1-\pi)$ are commutative locally finite presentation group schemes with geometrically connected fibers. Moreover, they are smooth over $\Spec(V)$, since a retract of a formally smooth $\Spec(V)$-scheme is formally smooth. It follows that $\mathrm{ker(\pi)}$ and $\mathrm{ker}(1-\pi)$ are semi-abelian schemes. Indeed, by Chevalley's theorem 
    each geometric fiber of either factor is uniquely an extension of an abelian variety by a smooth connected affine group variety. Since their product is semi-abelian, it follows that these affine contributions in both factors are tori. Hence $\mathrm{ker}(\pi)=: A'$ is a semi-abelian model of $A_F$. By \cref{l:semi-abelian_ext}, the extension given by the semi-abelian variety $E_F$ uniquely spreads out to an extension $0\to T\to E\to A'\to 0$.

    For general $V$, let $F^\mathrm{sep}$ be a separable closure of $F$, and choose a valuation ring $V^{\mathrm{sep}}$ of $F^\mathrm{sep}$ dominating $V$. Then $V^\mathrm{sep}$ is the filtered union of dominating valuation subrings $V_i\supseteq V$ whose fraction field extensions $\mathrm{Frac}(V_i)/F$ is finite. Indeed, every finite extension $F_i/F$ defines such $V_i$ by $V_i:=F_i\cap V^\mathrm{sep}$. Hence the statement follows from \spcite{01ZL} and \cref{l:semi-abelian_approximation}.
\end{proof}

\begin{theorem}\label{t:Neron_existence_V}
	Let $V$ be a Henselian valuation ring with fraction field $F$, and suppose that the residue fields of $\Spec(V)$ at non-generic points are algebraically closed. Let $E_F\to \Spec(F)$ be a semi-abelian variety. After replacing $F$ by a finite separable field extension and $V$ by the unique valuation ring dominating it,
    $E_F$ extends to a semi-abelian scheme over $\Spec(V)$.
    Once it does,
    $E_F$ admits a Néron model over $\Spec(V)$ whose fiberwise identity component is a semi-abelian scheme.
\end{theorem}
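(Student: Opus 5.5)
The first assertion is essentially \cref{l:semi-abelian-model}. Because $V$ is Henselian, a finite separable extension $F'/F$ has a unique valuation ring $V'$ dominating $V$, namely the integral closure of $V$ in $F'$. So the ``suitable choice'' of dominating valuation ring in \cref{l:semi-abelian-model} is forced, and it is again Henselian. We must also check that the hypothesis is stable under this base change: the residue field of $V'$ at each non-generic prime is a finite extension of the corresponding residue field of $V$, and that field is algebraically closed, so the two residue fields coincide. From now on we assume $E_F$ extends to a semi-abelian scheme $E\to \Spec(V)$. By \cref{p:semi-abelianAreNeron}, $E$ is then the marked Néron model of $E_F$.

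To build the full Néron model I would follow the approach of \cite[Prop.\ 10.1.7]{BoschLuetkebohmertRaynaud1990NeronModels}: glue translates of $E$ indexed by the component groups. For each point $s\in\Spec(V)$, let $V_s$ be the localization, which is again Henselian. For a point $p\in E_F(F)$, ask whether the section $p$ extends to a $V_s$-point of $E$. Then define $\mathcal N$ as the union of the translates $E_{V}\cdot p$ glued along the loci where $p$ is integral, in other words the quotient of $\coprod_{p\in E_F(F)} E$ by the obvious equivalence relation.

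The key step is to verify the Néron mapping property. I would reduce it as follows. Let $X\to\Spec(V)$ be smooth and let $f_F\colon X_F\to E_F$ be a morphism. Since the residue fields are algebraically closed and $V$ is Henselian, every point of every fiber of $X$ lifts to a section of $X$ over the corresponding localization, where one localizes at the prime that is the image of the point. On a connected neighbourhood of such a section $\sigma$, the translate $f_F-f_F(\sigma_F)$ is pointed. So by the marked mapping property (\cref{p:semi-abelianAreNeron}, applied over the localization) it extends to $E$, and hence $f_F$ itself extends to the translate of $E$ by $f_F(\sigma_F)\in E_F(F)$. These local extensions glue by the injectivity coming from separatedness of $\mathcal N$.

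The main obstacle I expect is separatedness of the glued object $\mathcal N$ together with the correct form of the gluing. Both amount to the following claim: the set of points of $\Spec(V)$ over which a rational point $p\in E_F(F)$ extends to $E$ is closed under generization and is open. Local finite presentation of the extension locus should follow from spreading out via \cref{l:semi-abelian_approximation}. Separatedness should follow from $E$ itself being separated, using that two translates agree on a schematically dense open. Smoothness of $\mathcal N$ is immediate, and its fiberwise identity component is $E$, which is semi-abelian.
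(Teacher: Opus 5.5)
Your plan follows the paper's route. You use \cref{l:semi-abelian-model} plus Henselianity for the first assertion, and glue $E_F(F)$ copies of $E$ along the domains of definition $\mathrm{dom}(h-g)$, as for $\dG_m$ in \cite[Example 10.1.5]{BoschLuetkebohmertRaynaud1990NeronModels}. The mapping property then comes from Hensel-lifted sections, translation, and \cref{p:semi-abelianAreNeron}.

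The genuine gap is separatedness. You rightly flag it as the crux, but then reduce it to the wrong statement. That the locus where $p\in E_F(F)$ extends is open and stable under generization is automatic: it is the domain of definition of a rational map into a scheme locally of finite presentation, so \cref{l:semi-abelian_approximation} plays no role. This fact only makes the gluing datum well defined. Likewise, ``$E$ is separated and two translates agree on a schematically dense open'' proves uniqueness of extensions, not separatedness of a glued scheme. The line with doubled origin is glued from separated pieces along a dense open.

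Nothing in your separatedness argument uses that $V$ is a valuation ring, and the same construction fails without it. Take $E=\dG_m$ over $\Spec k[x,y]_{(x,y)}$ and $g=x/y$. The gluing locus $D(xy)$ is open and generization-stable. But the closure of the point $g$ in $\dG_m$ is the spectrum of $k[x,y]_{(x,y)}[x/y,y/x]$, which meets the closed fiber, so the glued scheme is not separated.

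What you need is that for every $g\in E_F(F)$ the extension $g^e\colon \mathrm{dom}(g)\to E$ is a closed immersion into all of $E$. This suffices because a group scheme is separated if and only if its unit section is closed, and in the chart indexed by $h$ the unit section is $(-h)^e$. Here the valuation ring enters:
\begin{itemize}
\item Let $\Spec R\subseteq E$ be an affine open containing the point $g$. The image of $R\to F$ is a ring between $V$ and $F$, hence a localization $\cO_{S,s}$ of $V$.
\item Spreading out $\Spec\cO_{S,s}\to\Spec R$ (using that $R$ is of finite presentation over $V$) shows that $\Spec\cO_{S,s}$ is open in $S$ and equals $(g^e)^{-1}(\Spec R)$.
\item Since $R\to\cO_{S,s}$ is surjective, $g^e$ is a closed immersion over $\Spec R$.
\item Affine opens not containing $g$ miss the image of $g^e$ entirely, because that image consists of specializations of $g$.
\end{itemize}

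A smaller correction concerns your mapping-property step. Only closed points of a fiber $X_{s_0}$ lift to sections over $\cO_{S,s_0}$; these are rational since $\kappa(s_0)$ is algebraically closed. For arbitrary $x_0$, pick such a point in the closure of $x_0$ in $X_{s_0}$ and spread the section out over an open $U\ni s_0$. Then work on the fiberwise connected component of $X_U$ through it, which is open by \spcite{055R} and is what \cref{p:semi-abelianAreNeron} requires.
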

\begin{proof}
    Recall the characterization that a valuation ring is Henselian if and only if for every finite extension of the fraction field there is exactly one choice of a valuation ring dominating it. Set $S:=\Spec(V)$, $F:=\mathrm{Frac(V)}$, and let $E_F\to \Spec(F)$ be a semi-abelian
    variety. After replacing $F$ by a finite separable field extension and $V$ by a dominating valuation ring, $E_F$ admits a semi-abelian scheme model $E\to S$ by \cref{l:semi-abelian-model}. Note that this new $V$ again satisfies the assumptions of the theorem.
    
    The following construction follows that of the Néron model of the multiplicative group in \cite[Example 10.1.5]{BoschLuetkebohmertRaynaud1990NeronModels}. For each $g\in E_F(F)$, let $g^e$ be the extension of $g$ to its domain of definition $\mathrm{dom}(g)$ associated to the rational map
    $g\colon S\dashrightarrow E$.  We define
	\begin{equation*}
		G:= \bigcup_{g\in E_F(F)}g^e + E
	\end{equation*}
	as the result of gluing $E_F(F)$ copies of $E$, where the copy $g^e + E$ is glued to $h^e+E$ along $\mathrm{dom}(h-g)$ by the translation by $(h-g)^e$ on $\restrsmall{E}{\mathrm{dom}(h-g)}$. This defines a gluing datum. 
    The space $G$ is a smooth $S$-scheme with generic fiber $E_F$ and fiberwise identity component $E$. Moreover, it carries a group structure since the maps $(g^e + E)\times_S (f^e+E)\to (g+f)^e + E$ given by addition on $E$ glue.
    
    We claim that $G$ is separated. For a group scheme, this is equivalent to the unit section being closed. Thus, this reduces to the claim that for each $g\in G(F)$, $g^e\colon \mathrm{dom}(g^e)\to E$ is a closed immersion. Let $\Spec(R)\subseteq E$ be an affine open subspace. If $\Spec(R)$ does not contain $g$, then it does not intersect the image of $g^e$. Otherwise, $g$ corresponds to a factorization $V\to R\to F$. The image of $R$ in $F$ is a subring of $F$ containing $V$, and hence a local ring of $S$. Denote the corresponding point of $S$ by $s_R$. Then $\cO_{S,s_R}$ is the minimal local ring of $S$ over which $g$ extends to $\Spec(R)$. Note that this implies that $\Spec(\cO_{S,s_R})\subseteq S$ is open. Indeed, the image of $\Spec(R)$ in $S$ is open by flatness, and a map defined on a local ring spreads out to an open neighborhood since $\Spec(R)$ is of finite presentation over $S$. 
    This implies $(g^e)^{-1}(\Spec(R))=\Spec(\cO_{S,s_R})$. Surjectivity of $R\to \cO_{S,s_R}$ shows that the restriction $g^e\colon (g^e)^{-1}(\Spec(R))\to \Spec(R)$ is a closed immersion, which shows the claim.
	
	We now verify the Néron mapping property for $G$. Suppose that $X\to S$ is a smooth scheme. Injectivity of $G(X)\to G(X_F)$ follows as $G$ is separated and $X$ is flat. For surjectivity let $f_F\colon X_F\to E_F$ and $x_0\in X$ be given. We claim that $f_F$ extends to an open neighborhood of $x_0$. First, we may replace $X$ by an affine neighborhood of $x_0$. Let $s_0\in S$ be the image of $x_0$ in $S$. When $s_0$ is the generic point, the claim follows from approximating $s_0$ by affine open neighborhoods. Otherwise, denote by $y_0$ a closed point in the fiber $X_{s_0}$ generalizing to $x_0$.

    Since $\Spec(\cO_{S,s_0})$ is Henselian, $\kappa(s_0)$ is algebraically closed, and $X$ is smooth, $y_0$ extends to a section of $X_{\cO_{S,s_0}}$ over $\Spec(\cO_{S,s_0})$. This spreads out over an open subset $U\subseteq{S}$ to a section $y\colon U\to X_U$,  since we can approximate $\Spec(\cO_{S,s_0})$ by affine open neighborhoods of $s_0$ and use \spcite{01ZL}. The fiberwise connected component $X^0_U$ meeting this section is open in $X_U$, see \spcite{055R}. By construction, for every $g\in G(F)$, the unit section of 
    $g^e+E$
    gives rise to a section of $G$
    over $S$ extending $g$. Thus 
    $G(U)=G(F)$ by separatedness. Therefore $\Spec(F)\xrightarrow{y_F} X^0_F\to G_F$ extends uniquely to a section $\sigma\colon U\to G_U$. After translation, we may assume that $\sigma=e_{G_U}$. Thus $X^0_F\to G_F$ extends to a morphism $X^0_U\to E_U\subseteq G$ by
    \cref{p:semi-abelianAreNeron}.
\end{proof}

The assumptions on $V$ in the previous Theorem are satisfied in the following cases:
\begin{example}\label{e:Neron_excistence}
Let $V$ be a valuation ring with fraction field $F$. Then $V$ is Henselian and the residue fields at non-generic points of $\Spec(V)$ are algebraically closed in each of the following cases:
    \begin{enumerate}
        \item $F$ is algebraically closed.
        \item $(F,V)$ is an affinoid field whose residue field $F^\circ/F^{\circ\circ}$ is algebraically closed. Equivalently, $V$ has a prime ideal of height one, the residue field at that prime is algebraically closed, and $V$ is complete with respect to the valuation topology.
    \end{enumerate}
\end{example}
\begin{proof}
    Recall that $V$ is Henselian if and only if for every finite field extension $F'/F$ there is exactly one choice of a valuation ring $V'$ of $F'$ dominating $V$. This implies that (1) is an example. If $V$ satisfies (2), note that all residue fields are algebraically closed as $V/F^{\circ\circ}$
    is a valuation ring with algebraically closed fraction field $F^\circ/F^{\circ\circ}$. Let $F'/F$ be a finite field extension, and let $V'$ be a valuation ring of $F'$ dominating $V$. Then $(F', V')$ is an affinoid field and $F'^\circ\subseteq F'$ is unique as $F^\circ$ is complete. Now, the residue field extension $(F'^\circ/F'^{\circ\circ})/(F^{\circ}/F^{\circ\circ})$ is trivial by the assumption that $F^{\circ}/F^{\circ\circ}$ is algebraically closed. As $V'$ corresponds exactly to a valuation ring of $F'^\circ/F'^{\circ\circ}$ dominating $V/F^{\circ\circ}$, this shows the uniqueness of $V'$. Thus $V$ is Henselian.
\end{proof}

We can deduce the following results.

\begin{corollary}\label{c:Neron_existence_microbial}
    Suppose that the residue field $k$ is algebraically closed. Let $A\to \Spec(K)$ be an abelian variety. After a finite separable affinoid field extension,
    $A$ admits a Néron model over $\Spec(K^+)$.
\end{corollary}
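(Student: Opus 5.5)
The plan is to deduce this directly from \cref{t:Neron_existence_V}, applied to the valuation ring $V:=K^+$ with fraction field $F:=K$. To do so, we check that $(K,K^+)$ meets the hypotheses of that theorem by way of \cref{e:Neron_excistence}(2). Since $(K,K^+)$ is an affinoid field, $K$ is complete with respect to its rank one valuation, which makes $K^+$ complete in the valuation topology. The height one prime of $K^+$ is $K^{\circ\circ}$, and its residue field is $k$, which is algebraically closed by assumption. Hence \cref{e:Neron_excistence} shows that $K^+$ is Henselian and that all residue fields at non-generic points of $\Spec(K^+)$ are algebraically closed. These points are the residue fields of the valuation rings $K^+_{\mathfrak p}$, namely $k$ and the residue fields of $K^+/K^{\circ\circ}$ localized.

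Next, view the abelian variety $A$ as a semi-abelian variety over $\Spec(K)$ with trivial toric part, and apply \cref{t:Neron_existence_V}. This provides a finite separable extension $K'/K$, with $K'^+$ the unique valuation ring of $K'$ dominating $K^+$. Over $\Spec(K'^+)$, the base change $A_{K'}$ extends to a semi-abelian scheme and then has a Néron model.

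It remains to check that $(K,K^+)\to (K',K'^+)$ is a finite separable affinoid field extension. Since $K'/K$ is finite and $K$ is complete, $K'$ carries a unique extension of the absolute value and is complete. Since $K'^+$ dominates $K^+$, we have $K'^+\cap K=K^+$, because valuation rings are maximal for domination. Moreover, $K'^+$ contains $K'^{\circ\circ}$, being the preimage of a valuation ring of the residue field of $K'^\circ$, as in the proof of \cref{t:uniformization_analytic}. Hence $K'^+$ is open and $(K',K'^+)$ is an affinoid field.

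The only step needing care is this identification: checking that the valuation ring produced abstractly in \cref{t:Neron_existence_V} is the $+$-ring of an affinoid field extension. Uniqueness of the dominating ring, which follows from Henselianity, makes this straightforward. Everything else is immediate.
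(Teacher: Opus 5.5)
Your proposal is correct and is exactly the intended deduction. The paper states this corollary without a separate proof, as an immediate consequence of \cref{t:Neron_existence_V} with the hypotheses on $K^+$ supplied by \cref{e:Neron_excistence}(2); your extra check that the unique dominating ring $K'^+$ satisfies $K'^+\cap K=K^+$ and $K'^{\circ\circ}\subseteq K'^+$, so that $(K',K'^+)$ is a finite separable affinoid field extension, fills in a detail the paper leaves implicit.
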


\begin{corollary}\label{c:abelian_Neron}
    Let $V$ be a valuation ring with fraction field $F$.
    \begin{enumerate}[label=(\arabic*), leftmargin=2.6em]
        \item If $F$ is algebraically closed, then every abelian variety over $F$ has a Néron model over $V$.
        \item If $A\to \Spec(V)$ is an abelian scheme, then $A$ is the Néron model of its generic fiber.
    \end{enumerate}
\end{corollary}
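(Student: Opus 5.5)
Both parts will follow from \cref{t:Neron_existence_V} and \cref{p:semi-abelianAreNeron}, with the unit section used to make the marked mapping property give the full Néron mapping property.

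\textbf{Part (1).} Let $F$ be algebraically closed. By \cref{e:Neron_excistence}(1), $V$ is Henselian and all its residue fields at non-generic points are algebraically closed. Since $F$ has no nontrivial finite separable extensions, the base change in \cref{t:Neron_existence_V} is trivial. Hence every semi-abelian variety over $F$, in particular every abelian variety, extends to a semi-abelian scheme over $V$. The same theorem then gives a Néron model over $\Spec(V)$.

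\textbf{Part (2): reduction to the marked property.} Let $A\to\Spec(V)$ be an abelian scheme with generic point $\eta$. Since $V$ is a normal integral scheme and $A$ is semi-abelian, \cref{p:semi-abelianAreNeron} shows that $A$ is the marked Néron model of $A_\eta$. It remains to upgrade this to the full Néron mapping property $A(X)\cong A_\eta(X_\eta)$ for all smooth $X\to S=\Spec(V)$.

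\textbf{Part (2): injectivity and reduction to a local problem.} Injectivity holds because $X$ is flat and $A$ is separated. For surjectivity, take $f_\eta\colon X_\eta\to A_\eta$ and a point $x_0\in X$ over $s_0\in S$. By separatedness the local extensions glue, so it suffices to extend $f_\eta$ over a neighbourhood of $x_0$. If $s_0=\eta$ there is nothing to do. Otherwise we may replace $S$ by $\Spec(\cO_{S,s_0})$ and $X$ by an affine neighbourhood of $x_0$, and we may also pass to the connected component of the fibre through $x_0$.

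\textbf{Part (2): the main obstacle.} In \cref{t:Neron_existence_V} one produced a section through $x_0$ using Henselianity and an algebraically closed residue field, which we do not have here. This missing section is the obstacle, and I would get around it by a base change. Choose a local étale extension, or more generally a faithfully flat extension of valuation rings $V\to V'$, for instance a Henselisation followed by a finite extension of residue fields, over which $X$ acquires a section $y$ through a point over $x_0$. Over $V'$ the scheme $A_{V'}$ is still an abelian scheme, hence a marked Néron model by \cref{p:semi-abelianAreNeron}. After translating by the generic point $f_\eta(y_\eta)\in A(\eta')$, I would use that $A(V')=A(F')$ by the valuative criterion of properness for the abelian scheme. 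Hence that translation extends, and $f_{\eta'}$ extends over $X_{V'}$ on the fibrewise-connected component through $y$, which is open by \spcite{055R}. Finally the extension descends to $V$ by fpqc descent of morphisms, using that the two pullbacks to $V'\otimes_V V'$ agree generically and that the target is separated; here flatness of $X\otimes V'\otimes V'$ supplies the injectivity on generic fibres. The existence of the section after such a base change, i.e. choosing $V'$ dominating $V$ with a residue field point of the smooth fibre, is the step where care is needed.
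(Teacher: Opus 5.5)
Part (1) is exactly the paper's argument. In part (2) you take a different route from the paper (upgrading the marked property of \cref{p:semi-abelianAreNeron} by hand), but your last step has a gap. You localize at $x_0$ \emph{before} base changing, so over $V'$ you only extend $f_{\eta'}$ on the open $X^0_{V'}\subseteq X_{V'}$, the fibrewise connected component through $y$. This open is in general strictly smaller than the preimage of any neighbourhood of $x_0$. That preimage contains every point of $X_{V'}$ over $x_0$, and these can lie in several connected components of the closed fibre $C\otimes_{\kappa(s_0)}\kappa(s_0')$, where $C$ is the component of $X_{s_0}$ through $x_0$. In the other fibres it also meets components not passing through $y$. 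Fpqc descent along $X\otimes_V V'\to X$ needs a morphism on the whole cover, or at least on the full preimage of an open subset of $X$. So ``the extension descends to $V$ by fpqc descent'' does not apply as stated. Moreover, a Henselisation is not of finite presentation over $V$, so there is no a priori reason for the image of $X^0_{V'}$ in $X$ to be open, let alone for $X^0_{V'}$ to be a covering of it along which you could descend. By contrast, the existence of the section, which you singled out as the delicate point, is standard.

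Either of the following repairs works.

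(a) Use an honest \'etale neighbourhood $(U,u)\to(\Spec(V),s_0)$, shrunk to be affine and integral, over which $X_U$ has a section through a point above a closed point $y_0\in C$ with $\kappa(y_0)/\kappa(s_0)$ separable; such points are dense in the smooth scheme $C$. Since $C\otimes_{\kappa(s_0)}\kappa(u)\to C$ is finite \'etale, each of its connected components surjects onto $C$, so $X^0_U$ contains a point over $x_0$. Its image $W\subseteq X$ is open because $X_U\to X$ is \'etale. You then descend along the \'etale surjection $X^0_U\to W$. The two pullbacks to $X^0_U\times_W X^0_U$ agree because this scheme is flat over $V$ and $A$ is separated.

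(b) Alternatively, base change \emph{globally} first to a valuation ring $V^{\mathrm{alg}}$ of an algebraic closure of $F$ dominating $V$. All localizations of $V^{\mathrm{alg}}$ are Henselian with algebraically closed residue fields. Your local argument, essentially the last paragraph of the proof of \cref{t:Neron_existence_V}, therefore works at every point and gives the full N\'eron property of $A_{V^{\mathrm{alg}}}$. Only then descend along $X\otimes_V V^{\mathrm{alg}}\to X$. This is the paper's route, except that the paper avoids redoing the local argument. Over $V^{\mathrm{alg}}$ the N\'eron model $\cN_A$ exists by (1), and its fibrewise identity component is $A$ by \cref{p:semi-abelianAreNeron} and uniqueness of marked N\'eron models. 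Since $A$ is open and, being proper, closed in the connected $\cN_A$, one gets $\cN_A=A$.
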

\begin{proof}
(1) follows from \cref{t:Neron_existence_V}. (2): Choose an algebraic closure $F^{\mathrm{alg}}/F$ and a valuation ring $V^{\mathrm{alg}}$ of $F^{\mathrm{alg}}$ dominating $V$. By fpqc-descent, to show that a separated scheme over $V$ has the Néron mapping property for its generic fiber, it suffices to show it after base change to $V^{\mathrm{alg}}$. Hence we may assume that $V=V^{\mathrm{alg}}$. The Néron model $\cN_A$ of its generic fiber $A_F$ satisfies $\cN_A^0=A$ by the uniqueness of marked N\'eron models. As $A$ is proper, $A\subseteq \cN_A$ is also closed. Hence $\cN_A=A$ since $\cN_A$ is connected.
\end{proof}

Note that these results gives a partial answer to the question posted in \cite{mathoverflow-Neron} if abelian varieties have N\'eron models over arbitrary valuation rings. The second statement of \cref{c:abelian_Neron} 
was in fact proved with a different argument by K\k{e}stutis \v{C}esnavi\v{c}ius and David Holmes
in an answer to \cite{mathoverflow-Neron}.

\begin{lemma}\label{l:divisible_component_group}
	Let $V$ be a valuation ring with algebraically closed field of fractions $F$. Suppose that $E_F\to \Spec(F)$ is a semi-abelian variety with Néron model $G\to \Spec(V)$. Let $s\in \Spec(V)$. Then $\pi_0(G_s)$ is a divisible group.
	
	In particular, the Néron model is quasi-compact if and only if $E_F$ is an abelian variety with good reduction over $V$, or if $V=F$.
\end{lemma}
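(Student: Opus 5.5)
We use the explicit construction of $G$ from the proof of \cref{t:Neron_existence_V}. Since $F$ is algebraically closed, the hypotheses of that theorem hold (\cref{e:Neron_excistence}), and $E_F$ already extends to a semi-abelian scheme $E\to S=\Spec(V)$; no base change is needed because $F$ has no nontrivial finite separable extensions. By uniqueness of Néron models, $G=\bigcup_{g\in E_F(F)} (g^e+E)$ with fiberwise identity component $E$. For $s\in S$, the components of $G_s$ are the translates $g^e(s)+E_s$, where $g$ ranges over those $g\in E_F(F)$ with $s\in\mathrm{dom}(g)$. Two such translates coincide exactly when $s\in \mathrm{dom}(h-g)$. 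Hence
\[
\pi_0(G_s)\cong \Gamma_s/\Gamma_s^0,
\]
where $\Gamma_s:=\{g\in E_F(F): s\in\mathrm{dom}(g)\}$ and $\Gamma_s^0:=\{g : g^e(s)\in E\}$. Since $\mathrm{dom}(g)$ is open in the totally ordered $\Spec(V)$, we have $\Gamma_s=E_F(F)$ for every $s$. More precisely, $\mathrm{dom}(g)$ is the set of generalizations of some point, namely the point $s_R$ from the separatedness proof. So $\pi_0(G_s)\cong E_F(F)/\Gamma^0_s$, where $\Gamma^0_s$ is the subgroup of $F$-points extending to $\Spec(\cO_{S,s})\to E$.

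Divisibility now follows from divisibility of $E_F(F)$. Since $F$ is algebraically closed, $E_F(F)$ is divisible: multiplication by $m$ on a semi-abelian variety is an isogeny, hence surjective on points over an algebraically closed field. Any quotient of a divisible group is divisible, so $\pi_0(G_s)$ is divisible. This step is formal once the identification of $\pi_0(G_s)$ above is justified. The main point to check carefully is that every component of $G_s$ comes from some $g\in E_F(F)$, which holds by construction of the gluing, and that every component contains a point of some translate.

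For the final claim, first suppose $G$ is quasi-compact. Then each $\pi_0(G_s)$ is finite, since $G_s$ is of finite type and the translates are disjoint open pieces. A finite divisible group is trivial, so $G=E$. If $V\neq F$, take $s$ the closed point. If the toric part $T_F$ had positive rank, then $\dG_m(F)/\dG_m(V)\cong F^\times/V^\times$, a nonzero divisible value group, would inject into $\pi_0(G_s)$, which contradicts triviality. Concretely, a point $q\in T_F(F)$ of nonunit valuation does not extend, and this is compatible with $E$ being separated. Thus $E_F$ is abelian and $G=E$ is a semi-abelian model. Every point in $A(F)=A(V)$ then extends, so $G$ is of finite type. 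We still must show that $E$ is an abelian scheme, i.e., has no toric part at special fibers. For that, I would argue that a toric part in $E_s$ produces, via the uniformization over the Henselian local ring $\cO_{S,s}$ and its rank one quotient, points of $E_F(F)$ not extending over $s$. This would again give nontrivial $\pi_0(G_s)$; in effect, $G=E$ forces properness by the valuative criterion. I expect this implication to be the main obstacle. The converse is \cref{c:abelian_Neron}(2), together with the trivial case $V=F$.
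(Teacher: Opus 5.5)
\textbf{Main gap.} It is the step you flag yourself. Once $G=E$, you still have to show that $E$ has torus rank $0$ at every $s$, and you give no argument for this. The remark that $G=E$ ``forces properness by the valuative criterion'' does not work. What you actually know is $E(\cO_{S,s})=E_F(F)$ for the local rings of $S$. Properness, however, requires lifts for arbitrary valuation rings, with arbitrary fraction fields, mapping to $S$. The suggested detour through a uniformization over $\cO_{S,s}$ and a rank one quotient is neither carried out nor needed.

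\textbf{Missing idea: count torsion points.} Take an integer $m\geq 2$ invertible in $V$. Then $E[m]$ is étale and separated over $S$. Since $E(V)=E_F(F)$, $\cO_{S,s}$ is Henselian and $\kappa(s)$ is algebraically closed, restriction gives bijections
\[
E[m](F)\cong E[m](V)\cong E[m](\cO_{S,s})\cong E[m](\kappa(s)).
\]
Hence $m^{2g-t(E_F)}=m^{2g-t(E_s)}$, where $g=\dim E_F$ and $t$ denotes torus rank. So the torus rank is constant. By \cite[Cor.\ I.2.11]{FaltingsChai1990Degeneration}, $E$ is then an extension of an abelian scheme by a split torus $T$. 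For $V\neq F$, points of $T_F(F)$ with a non-unit coordinate cannot extend to $V$-points of $E$: such an extension would factor through the closed subgroup $T$, and $T(V)$ consists of unit points. This forces $T=0$, and it disposes of the generic and the special toric parts at once.

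\textbf{Smaller points.} Your earlier step, that a point $q\in T_F(F)$ of non-unit valuation does not extend, is only asserted. Separatedness of $E$ does not give it by itself. You need $T_F$ to extend to a closed subtorus of $E$. This follows either from \cite[Prop.\ I.2.9]{FaltingsChai1990Degeneration} plus a limit argument, or from the constant-torus-rank argument above.

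The bookkeeping leading to $\pi_0(G_s)$ is also garbled. Each copy $g^e+E$ is a copy of $E$ over all of $S$. So your set $\Gamma_s=\{g: s\in\mathrm{dom}(g)\}$ is neither all of $E_F(F)$ nor what indexes the components. The correct statement is that the $s$-fibers of the copies indexed by $g$ and $h$ are identified if and only if $h-g$ extends over $\Spec(\cO_{S,s})$. This gives $\pi_0(G_s)\cong E_F(F)/E(\cO_{S,s})$.

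With that fixed, your divisibility argument is correct. It agrees with the paper's, which obtains the same quotient $G(V)/G^0(V)$ (after localizing at $s$) from the fact that every component of $G_s$ is hit by a section.
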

\begin{proof}
	For the first statement we may assume that $s$ is the closed point by considering the base change to $\Spec(\cO_{\Spec(V),s})$. Since every connected component of $G_s$ is hit by a section of $\Spec(V)$, 
    $\pi_0(G_s)\cong G(V)/G^0(V)=E_F(F)/G^0(V)$.
    It is divisible as $E_F$ is an extension of an abelian variety by a torus, and $F$ is algebraically closed.

	The if part of the last statement of the lemma is clear. For the other direction, suppose that $G$ is quasi-compact. Thus $\pi_0(G_s)$ is finite and hence trivial for every $s\in \Spec(V)$. Therefore $G=G^0$ and $G^0(V)=G^0(F)$. It follows from considering torsion points that the torus rank of $G^0$ is constant. Indeed, if $m$ is invertible in $V$, then $G[m]$ is étale, and hence
		\begin{equation*}
		m^{2g-t(E_F)}=\# G[m](F)=\# G[m](\cO_{\Spec(V),s})=\# G[m](\kappa(s))=m^{2g-t(G_s)},
		\end{equation*}
		where $g:=\mathrm{dim}(E_F)$, and $t(-)$ denotes the torus rank. Thus $G$ is globally an extension of an abelian scheme by a (split) torus (cf.\ \cite[Cor.\ I.2.11]{FaltingsChai1990Degeneration}). For $V\neq F$, the surjectivity of $G(V)\to G(F)$ implies that the torus rank is $0$.
\end{proof}

For example when $F=\dC_p$ and $E_F$ is a Tate curve with parameter $q\in \dC_p$, then the special fiber of the Néron model has component group $\dQ/v_p(q)\dZ$; see \cref{e:classification_RU_curves}.

We will see in \cref{p:Neron_finite_m}(2) that for every nonzero $m\in \dZ$, multiplication by $m$ is a finite and faithfully flat map on Néron models of
semi-abelian varieties over algebraically closed fields. This generally fails in the discretely valued setting.

\section{Formal N\'eron models over general valuation rings}\label{s:6}
In this section we return to the investigation of adic spaces over $\SKKp$ and their formal models. Recall our conventions \ref{convention_adic} and \ref{convention_formal} for this setting.
\subsection{Definitions and construction}
Our strategy for the construction of formal N\'eron models is to use the algebraic results of the previous section to lift models from $K^\circ$ to $K^+$.

\begin{definition}
    A \textit{formal semi-abelian (group) scheme} over $\SKp$ is a smooth commutative formal group scheme $\cE\to \SKp$ whose reduction $\cE\times_\SKp \Spec(K^+/K^{\circ\circ})\to \Spec(K^+/K^{\circ\circ})$ is a semi-abelian scheme.
\end{definition}

Formal semi-abelian schemes over $\SKp$ are separated and quasi-compact, as these properties can be tested on the reduction. 

Before giving the definition of a marked Néron model, let us show the following generalization of the classical theory of formal models which we will use freely.

\begin{lemma}\label{l:smooth_int_closed}
    Let $\cX\to \SKp$ be a smooth formal scheme. Then
    \begin{enumerate}
        \item the natural map $\cO_{\cX}\to (\sp_{\cX})_*\cO_{\cX\genf}^+$ is an isomorphism\footnote{Formal models whose structure sheaf satisfies this isomorphism are called \textit{integrally closed} in \cite{Dine2025FormalModels}.}.
        \item the connectedness of $\cX$ is equivalent to that of $\cX\genf$ as well as to that of $\cX^\circ$.
    \end{enumerate}
\end{lemma}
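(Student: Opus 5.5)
Both parts reduce to local statements on affine opens $\Spf(A)\subseteq\cX$, where $\cX\genf$ restricts to $\Spa(A[\varpi^{-1}],\intcl{A})$ and $\sp_\cX$ is compatible with this description.

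\textbf{Part (1).} It suffices to show that $A$ is integrally closed in $A[\varpi^{-1}]$ and that $A$ equals the ring of power-bounded elements of $A[\varpi^{-1}]$ lying in $\intcl{A}$, compatibly with localization. Concretely, for a rational open corresponding to a formal open $\Spf(A_{\{f\}})$, the sections of $\cO^+$ over $\sp_\cX^{-1}(\Spf(A_{\{f\}}))$ should be exactly $A_{\{f\}}$.

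I would proceed in three steps.
\begin{enumerate}
\item \emph{Reduce to an étale statement.} Smoothness of $\cX$ gives, locally, an étale map $\Spf(A)\to\Spf(K^+\langle T_1,\dots,T_n\rangle)$. This follows from the local structure of smooth formal schemes in \cite{FujiwaraKato2018}, or by lifting a local étale coordinate system from the reduction over $\Spec(K^+/K^{\circ\circ})$.
\item \emph{Check integral closedness on each thickening.} By the pushout description in \cref{p:pushout_equivalence}, it is enough to see that $A/\varpi^n$ is flat over $K^+/(\varpi K^\circ)^n$ with geometrically reduced (indeed smooth) fibers. Then a standard argument applies: an element $a/\varpi$ integral over $A$ must have $a$ in $\varpi A$. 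This is because $A/\varpi A$ is reduced and flat over $K^+/\varpi$, which forces nilpotent classes to vanish in a suitable sense.
\item \emph{Use the classical case for the rank one part.} For $K^+=K^\circ$ this is the classical fact that smooth formal schemes are integrally closed. The difference between $K^+$ and $K^\circ$ only affects the $+$-structure. I would therefore compare $\cO^+$ over $K^+$ with $\cO^+$ over $K^\circ$ via the equality
\[
R^+[K^\circ]=(K^+\cap(K^\circ)^\times)^{-1}R^+
\]
together with the fiber product description of \cref{p:pushout_equivalence}: an element of $(\sp_\cX)_*\cO^+$ lies in $A\otimes K^\circ$-completed, and its reduction lands in the $K^+/K^{\circ\circ}$-part because it is integral there.
\end{enumerate}
The main obstacle is this last comparison. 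Concretely, one must show that $\intcl{A}\cap A^\circ_{K^\circ}=A$, where $A^\circ_{K^\circ}$ is the $K^\circ$-version. This amounts to the statement that the reduction $A\otimes K^+/K^{\circ\circ}$ is normal and flat over the valuation ring $K^+/K^{\circ\circ}$, so that its integral closure in the generic fiber $A\otimes k$ is itself. Normality of a smooth algebra over a valuation ring, and the fact that integral closure commutes with this gluing, are what I would verify.

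\textbf{Part (2).} First, $\cX^\circ\subseteq\cX\genf$ via $j$. By \cref{l:dense_cocont} and \cref{l:circ_embedding}, every point of $\cX\genf$ specializes from $\cX^\circ$, and the saturated opens correspond bijectively to opens of $\cX^\circ$. Hence a clopen decomposition of either space induces one of the other, and $\cX\genf$ is connected if and only if $\cX^\circ$ is.

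Next, connectedness of $\cX$ is equivalent to connectedness of $\cX\genf$. Idempotents of $\Gamma(\cX,\cO_\cX)$ and of $\Gamma(\cX\genf,\cO)$ correspond, since idempotents lie in $\cO^+$ and we can apply (1) via $(\sp_\cX)_*\cO^+=\cO_\cX$. Connectedness of a locally ringed space is equivalent to the absence of nontrivial global idempotents, which concludes the argument. Alternatively, the surjective closed continuous map $\sp_\cX$ transports clopen decompositions directly.
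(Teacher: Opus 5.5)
Your Step 3 for (1) and your idempotent argument for (2) are essentially the paper's proof. You should, however, drop Step 2 and fix two points in (2).

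\textbf{Part (1).} The paper makes the ``gluing'' you leave to verify explicit by an elementary identity rather than through \cref{p:pushout_equivalence}. Since the reduction is smooth, hence reduced, the classical rank one result gives $(A[\varpi^{-1}])^\circ=A\otimes_{K^+}K^\circ$. Moreover,
\[
K^{\circ\circ}(A\otimes_{K^+}K^\circ)=K^{\circ\circ}A\subseteq A,
\]
so $A$ is exactly the preimage of $A\otimes_{K^+}K^+/K^{\circ\circ}$ under $A\otimes_{K^+}K^\circ\to A\otimes_{K^+}k$. The reduction is smooth over $K^+/K^{\circ\circ}$, hence normal and flat, and flatness places $A\otimes_{K^+}k$ inside its total ring of fractions. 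Therefore the reduction is integrally closed in $A\otimes_{K^+}k$, which yields $\intcl{A}=A$.

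Step 1 is unnecessary. Step 2 is false as stated, because $A/\varpi A$ is in general not reduced. Already for $A=K^+\langle T\rangle$ it equals $(K^+/\varpi K^+)[T]$. If $K^+\neq K^\circ$, take $c\in\mathfrak{m}_{K^+}\cap(K^\circ)^\times$ and $x=\varpi/c$; then $x\notin\varpi K^+$ but $x^2\in\varpi K^+$. Smoothness over the non-reduced base $K^+/(\varpi K^\circ)^n$ gives no reducedness.

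\textbf{Part (2).} Here $\cX^\circ$ denotes the formal scheme $\cX\times_{\SKp}\SKc$, not the rank one fibre of $\cX\genf$. Your saturation argument via \cref{l:dense_cocont} only shows that $\cX\genf$ is connected if and only if $(\cX\genf)^\circ$ is. You must add that $(\cX\genf)^\circ=(\cX^\circ)\genf$ with $\cX^\circ$ smooth over $\SKc$, and then apply the first equivalence in the case $K^+=K^\circ$. This is how the paper concludes.

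Your ``alternatively'' remark is wrong. Surjectivity, closedness and continuity of $\sp_\cX$ only give that connectedness of $\cX\genf$ implies connectedness of $\cX$. In the other direction, the images of the two halves of a clopen decomposition of $\cX\genf$ are closed and cover $\cX$, but they need not be disjoint. For example, $\Spf(K^+\langle T\rangle/(T(T-\varpi)))$ is connected, with generic fibre two disjoint copies of $\SKKp$. The implication from $\cX$ to $\cX\genf$ genuinely needs (1), exactly as in your idempotent argument.
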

\begin{proof}
    (1): Since $\sp_{\cX}$ is glued from open subspaces, it suffices to show that for every smooth connected affine admissible formal scheme $\Spf(A)\to \SKp$, $A$ is integrally closed in $A[\varpi^{-1}]$, i.e., in $A([\varpi^{-1}])^\circ$.  Note that by \cite[Prop.\ 3.4.1]{Luetkebohmert2016RigidGeometryCurves} we have $A([\varpi^{-1}])^\circ=A\otimes_{K^+}K^\circ$, as the special fiber is smooth and hence reduced. Moreover, we have 
    \begin{equation*}
        K^{\circ\circ}(A\otimes_{K^+}K^\circ)=K^{\circ\circ}(K^\circ A)= K^{\circ\circ}A.
    \end{equation*}
    Therefore it suffices to show  that $A/K^{\circ\circ}A$ is integrally closed in $(A\otimes_{K^+}K^\circ)/K^{\circ\circ}(A\otimes_{K^+}K^\circ)$. Geometrically, this is the map
    \begin{equation*}
        \Spf(A)\times_{\SKp} \Spec(k) \to \Spf(A)\times_{\SKp} \Spec(K^+/{K^{\circ\circ}}),
    \end{equation*}
    i.e., the inclusion of the generic fiber of the reduction. Since the reduction is smooth, hence normal and flat, this shows the claim. Indeed, normality of $\Spec(A/K^{\circ\circ}A)$ implies that $A/K^{\circ\circ}A$ is integrally closed in its total ring of fractions $Q$. Flatness implies that $A\otimes_{K^+}k$ embeds into  $Q$. 

    (2): Since $\mathrm{sp}_\cX\colon {\cX\genf}\to\cX$ is continuous and surjective, the connectedness of $\cX\genf$ implies that of $\cX$. Suppose that $\cX$ is connected, and let ${e\in \Gamma(\cX\genf, \cO_{\cX\genf})}$ be idempotent, i.e., a root of $T^2-T=0$. By (1), ${\Gamma(\cX, \cO_{\cX})= \Gamma(\cX\genf, \cO_{\cX\genf}^+)}$  is integrally closed in $\smash{\Gamma(\cX\genf, \cO_{\cX\genf})}$. Hence $\smash{e\in \Gamma(\cX, \cO_{\cX})}$ and therefore $e=0,1$ since $\smash{\cX}$ is connected. This shows the first equivalence. For the second, note that by \cref{l:dense_cocont} $\cX\genf$ is connected if and only if $(\cX\genf)^\circ$ is connected. Hence the statement follows from the previously shown equivalence in the special case $K^+=K^\circ$.
\end{proof}

Note that for every admissible formal scheme $\cX\to \SKp$ the natural map $\cX(\SKp)\to \cX\genf(\SKKp)$ is an isomorphism. This follows from considering affine pieces.

Similarly, a morphism between admissible formal schemes is quasi-compact (quasi-separated, respectively) if and only if its adic generic fiber has the respective property. This is a consequence of the fact that the specialization morphism is surjective. Moreover, observe that every admissible formal scheme over $\SKc$ is quasi-separated, as its reduction is a scheme over a field that is locally of finite type.
We will freely use these facts.

\begin{definition}\label{d:markedNeron_K+}
    Let $G$ be a commutative adic group space over $\SKKp$ that is quasi-separated and locally of topologically finite type. A marked Néron model for $G$ consists of an open subgroup $G'\subseteq G$ and a smooth, separated, fiberwise connected formal model $\cG'\to \SKp$ of $G'$ satisfying the following universal property: 
    
    For every smooth, fiberwise connected, quasi-separated formal scheme $\cX\to \SKp$ with fixed section $x_0\in \cX(\SKp)$, the following natural maps are isomorphisms
    (where pointed means that the fixed section is mapped to the unit section)\vspace{-1mm}
    % https://q.uiver.app/#q=WzAsMyxbMCwwLCJcXG1hdGhybXtNb3J9XntcXG1hdGhybXtwdH19KFxcY1gsIFxcY0cnKSJdLFsxLDAsIlxcbWF0aHJte01vcn1ee1xcbWF0aHJte3B0fX0oXFxjWFxcZ2VuZiwgRycpIl0sWzIsMCwiXFxtYXRocm17TW9yfV57XFxtYXRocm17cHR9fShcXGNYXFxnZW5mLCBHKSJdLFswLDEsIigtKVxcZ2VuZiJdLFsxLDJdXQ==
\[\begin{tikzcd}
	{\mathrm{Mor}^{\mathrm{pt}}(\cX, \cG')} & {\mathrm{Mor}^{\mathrm{pt}}(\cX\genf, G')} & {\mathrm{Mor}^{\mathrm{pt}}(\cX\genf, G).}
	\arrow["{(-)\genf}", from=1-1, to=1-2]
	\arrow[from=1-2, to=1-3]
\end{tikzcd}\]

    A strong marked Néron model of $G$ is defined similarly, requiring that $\cG'$ is smooth, separated, and connected (instead of fiberwise connected), and with the universal property imposed for every smooth, connected formal scheme $\cX\to \SKp$ equipped with a section.
\end{definition}

In other words,
if $X\to \SKKp$ has a $\SKKp$-rational point and admits a smooth formal model with the appropriate connectedness and quasi-separatedness condition, then every morphism $X\to G$ factors as a formal map through $\cG'$ up to a translation on $G$. If $K^+=K^\circ$, \cref{l:smooth_int_closed} shows that the two notions of marked Néron models coincide with that of a marked Néron model in \cite[Rem.\ 7.6.8]{Luetkebohmert2016RigidGeometryCurves}. Note however that in the terminology of loc.\ cit.\ the open subgroup $G'$ alone is the marked N\'eron model.

\begin{remark}\label{r:marked_model_analytic}
    Observe the following implications in the above setting:
    \begin{enumerate}[leftmargin=3em]
    	\item A (strong) marked Néron model is unique if it exists, and the universal property of $\cG'$ equips it with a group structure induced by $G$. If $\cX$ in the previous definition is a group space, then the universal property shows $\mathrm{Hom}(\cX,\cG')\cong \mathrm{Hom}(\cX\genf,G')\cong \mathrm{Hom}(\cX\genf,G)$ for group homomorphisms.
        \item The open subgroup of $G$ determined by its marked (strong marked, respectively) Néron model is the maximal open neighborhood of $e_G$ admitting a smooth formal fiberwise connected (smooth connected, respectively) model over $\SKp$.
        \item If a strong marked N\'eron model $\cG'$ of $G$ exists, then a marked N\'eron model exists and is given by the fiberwise identity component of $\cG'$, i.e., the union of the fiberwise topological connected components of the unit sections. It defines an open quasi-compact subgroup, as this can be checked on the reduction where it is known by the theory of group schemes.
    \end{enumerate}
\end{remark}

The following proposition is a generalization of \cite[Prop.\ 5.4.6]{Luetkebohmert2016RigidGeometryCurves} which deals with the case $K^+=K^\circ$.

\begin{proposition}\label{p:semi-abelian_subgroup}
    Let $G\to \SKKp$ be a connected and commutative adic group space which is separated and locally of topologically finite type.
    Suppose that $G$ contains an open subgroup $G'\subseteq{} G$ that admits a formal semi-abelian group model $\cG'\to \SKp$. Then $\cG'$ is the marked N\'eron model of $G$.
\end{proposition}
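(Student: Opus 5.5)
We must verify the universal property of \cref{d:markedNeron_K+} for $\cG'$. Note first that $\cG'$ is smooth, separated and fiberwise connected: formal semi-abelian schemes are separated and quasi-compact, and the fibers of the reduction are connected. Let $\cX\to\SKp$ be smooth, fiberwise connected and quasi-separated, with section $x_0$. The composite $\mathrm{Mor}^{\mathrm{pt}}(\cX,\cG')\to\mathrm{Mor}^{\mathrm{pt}}(\cX\genf,G)$ is injective: $G'\subseteq G$ is a monomorphism, and $(-)\genf$ is faithful on morphisms into separated models. The latter follows from \cref{l:smooth_int_closed}(1), or from flatness of $\cX$ on the reduction. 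It therefore suffices to show that every pointed $f\colon\cX\genf\to G$ factors through $G'$ and extends to a formal morphism $\cX\to\cG'$.

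For surjectivity I would follow the usual pattern: reduce to $\SKc$, then extend across the reduction using \cref{p:pushout_equivalence}. First I would pass to the $\SKKc$-fiber. By \cref{l:smooth_int_closed}(2), $\cX^\circ$ is connected on each connected component, and all hypotheses persist. The classical result \cite[Prop.\ 5.4.6]{Luetkebohmert2016RigidGeometryCurves} then shows that $f^\circ$ factors through $(G')^\circ$ and is induced by a formal map $\phi^\circ\colon\cX^\circ\to(\cG')^\circ$, since $(\cG')^\circ$ is a formal semi-abelian scheme over $\SKc$. The factorization of $f$ through $G'$ follows because $f^{-1}(G')$ is open and contains $\cX\genf{}^\circ$. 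By \cref{l:dense_cocont}, together with the fact that points of $\cX\genf$ specialize from $(\cX\genf)^\circ$ and that $G'$ is an open subgroup, $f^{-1}(G')$ is all of $\cX\genf$. One way to see this is via the maximality statement in \cref{r:marked_model_analytic}(2), applied pointwise. Alternatively one can deduce it from $f$ extending formally, which is the next step.

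The extension to $K^+$ is where \cref{p:pushout_equivalence} comes in. A morphism $\cX\to\cG'$ over $\SKp$ is equivalent to a compatible pair consisting of $\phi^\circ$ and a morphism of reductions $\widetilde\phi\colon\widetilde{\cX}\to\widetilde{\cG'}$ over $\Spec(K^+/K^{\circ\circ})$ that agrees over $\Spec(k)$ with the special fiber of $\phi^\circ$. Set $V=K^+/K^{\circ\circ}$. Then $\widetilde{\cG'}$ is a semi-abelian scheme over $\Spec V$, and $V$ is a normal integral scheme. Moreover, $\widetilde{\cX}$ is smooth, quasi-separated, has connected fibers, and carries the reduced section $\widetilde{x_0}$. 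The morphism $\phi^\circ_k$ is pointed on the generic fiber $\Spec k$. By \cref{p:semi-abelianAreNeron}, $\phi^\circ_k$ extends uniquely to a pointed $\widetilde\phi$. Gluing $\widetilde\phi$ with $\phi^\circ$ via \cref{p:pushout_equivalence} gives $\phi\colon\cX\to\cG'$ with $\phi^\circ$ as its $\SKc$-fiber. Finally, $\phi\genf$ and $f$ agree after applying $(-)^\circ$. Both are maps into the separated space $G$, so \cref{p:valuative_strong_criterion} gives $\phi\genf=f$.

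The step I expect to be most delicate is keeping the various connectedness hypotheses straight. I would check three things. First, the $K^\circ$-fiber of a fiberwise connected $\cX$ satisfies the hypotheses of the classical proposition; if that proposition needs connected $\cX$, I would apply it componentwise. Second, the reduction of $\cX$ has connected fibers in the sense \cref{p:semi-abelianAreNeron} requires. Third, $G'$ in the classical statement is indeed $(G')^\circ$. A point that also needs care is that $(G')^\circ$ and $(\cG')^\circ$ satisfy the classical hypotheses: $G^\circ$ must still be connected, which holds by \cref{l:dense_cocont}.
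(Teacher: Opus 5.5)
Your argument is correct and is essentially the paper's proof. The paper uses the same three ingredients: \cite[Prop.\ 5.4.6]{Luetkebohmert2016RigidGeometryCurves} on the $\SKKc$-fiber; \cref{p:pushout_equivalence} together with \cref{p:semi-abelianAreNeron}, which make $\mathrm{Mor}^{\mathrm{pt}}(\cX,\cG')\to\mathrm{Mor}^{\mathrm{pt}}(\cX^\circ,\cG'^\circ)$ bijective; and \cref{p:valuative_strong_criterion}, which gives injectivity of $\mathrm{Mor}^{\mathrm{pt}}(\cX\genf,G)\to\mathrm{Mor}^{\mathrm{pt}}((\cX\genf)^\circ,G^\circ)$. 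It assembles them as a diagram chase rather than constructing the lift explicitly.

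One correction: drop your first two justifications that $f^{-1}(G')=\cX\genf$. An open subset containing the $\SKKc$-fiber need not be everything: in case (2) of \cref{e:classification_RU_curves}, $\cE\genf\subsetneq A$ is an open subgroup containing $A^\circ$. The appeal to \cref{r:marked_model_analytic}(2) is circular. The factorization through $G'$ is exactly what your final step $\phi\genf=f$ provides.
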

\begin{proof}
    Let $X\to \SKKp$ be an adic space admitting a smooth, quasi-separated, fiberwise connected formal model $\cX\to \SKp$ and a fixed section $x_0\in \cX(\SKp)$. Consider the following canonical maps
    % https://q.uiver.app/#q=WzAsNixbMCwwLCJcXG1hdGhybXtNb3J9XntcXG1hdGhybXtwdH19KFxcY1gsIFxcY0cnKSJdLFsxLDAsIlxcbWF0aHJte01vcn1ee1xcbWF0aHJte3B0fX0oWCwgRycpIl0sWzIsMCwiXFxtYXRocm17TW9yfV57XFxtYXRocm17cHR9fShYLCBHKSJdLFswLDEsIlxcbWF0aHJte01vcn1ee1xcbWF0aHJte3B0fX0oXFxjWF5cXGNpcmMsIFxcY0cnXlxcY2lyYykiXSxbMSwxLCJcXG1hdGhybXtNb3J9XntcXG1hdGhybXtwdH19KFheXFxjaXJjLCBHJ15cXGNpcmMpIl0sWzIsMSwiXFxtYXRocm17TW9yfV57XFxtYXRocm17cHR9fShYXlxcY2lyYywgR15cXGNpcmMpIl0sWzAsM10sWzMsNF0sWzQsNV0sWzAsMV0sWzEsMl0sWzEsNF0sWzIsNV1d
\[\begin{tikzcd}[row sep=small]
	{\mathrm{Mor}^{\mathrm{pt}}(\cX, \cG')} & {\mathrm{Mor}^{\mathrm{pt}}(X, G')} & {\mathrm{Mor}^{\mathrm{pt}}(X, G)} \\
	{\mathrm{Mor}^{\mathrm{pt}}(\cX^\circ, \cG'^\circ)} & {\mathrm{Mor}^{\mathrm{pt}}(X^\circ, G'^\circ)} & {\mathrm{Mor}^{\mathrm{pt}}(X^\circ, G^\circ).}
	\arrow[from=1-1, to=1-2]
	\arrow[from=1-1, to=2-1]
	\arrow[from=1-2, to=1-3]
	\arrow[from=1-2, to=2-2]
	\arrow[from=1-3, to=2-3]
	\arrow[from=2-1, to=2-2]
	\arrow[from=2-2, to=2-3]
\end{tikzcd}\]
The lower row consists of isomorphisms by \cite[Prop.\ 5.4.6]{Luetkebohmert2016RigidGeometryCurves}. The right vertical map is injective by \cref{p:valuative_strong_criterion}. The top right horizontal map is clearly injective.
Moreover, the left vertical map is a bijection. Indeed, by \cref{p:pushout_equivalence} we have
\begin{equation*}
    \mathrm{Mor}^{\mathrm{pt}}(\cX, \cG')\cong \mathrm{Mor}^{\mathrm{pt}}(\cX^\circ, \cG'^\circ)\times_{\mathrm{Mor}^{\mathrm{pt}}(\cX_k, \cG'_k)} \mathrm{Mor}^{\mathrm{pt}}(\cX_{K^+/K^{\circ\circ}}, \cG'_{K^+/K^{\circ\circ}}).
\end{equation*}
The projection onto the first factor is a bijection by \cref{p:semi-abelianAreNeron}. Hence all maps in the upper row are bijections.
\end{proof}

\begin{lemma}\label{l:Ext_formal_agrees}
Let $\cB\to \SKp$ be a formal semi-abelian scheme whose $\Spf(K^\circ)$-fiber is a formal abelian scheme. There exists an abeloid variety $B\to \SKKp$ that contains $\cB\genf$ as an open subgroup, and $B$ is unique up to unique isomorphism over $\cB\genf$.
Let $\cT\to \SKp$ be a split formal torus, and denote the corresponding algebraic torus by $T\to \SKKp$. Then we have the following commutative diagram:
    % https://q.uiver.app/#q=WzAsNSxbMCwwLCJcXG1hdGhybXtFeHR9XjFfe1xcbWF0aHJte1phcn19KFxcY0IsXFxjVCkiXSxbMSwwLCJcXG1hdGhybXtFeHR9XjFfe1xcbWF0aHJte2FufX0oXFxjQlxcZ2VuZixUKSJdLFswLDEsIlxcbWF0aHJte0V4dH1eMV97XFxtYXRocm17WmFyfX0oXFxjQl5cXGNpcmMsXFxjVF5cXGNpcmMpIl0sWzEsMSwiXFxtYXRocm17RXh0fV4xX3tcXG1hdGhybXthbn19KEJeXFxjaXJjLFReXFxjaXJjKSJdLFsyLDAsIlxcbWF0aHJte0V4dH1eMV97XFxtYXRocm17YW59fShCLFQpIl0sWzAsMiwiIiwwLHsic3R5bGUiOnsidGFpbCI6eyJuYW1lIjoiaG9vayIsInNpZGUiOiJ0b3AifX19XSxbMSwzLCJcXHNpbSJdLFs0LDEsIlxcc2ltIiwyXSxbMCwxLCIiLDAseyJzdHlsZSI6eyJ0YWlsIjp7Im5hbWUiOiJob29rIiwic2lkZSI6InRvcCJ9fX1dLFsyLDMsIlxcc2ltIl0sWzQsMywiXFxzaW0iXV0=
\[\begin{tikzcd}[row sep=scriptsize]
	{\mathrm{Ext}^1_{\mathrm{Zar}}(\cB,\cT)} & {\mathrm{Ext}^1_{\mathrm{an}}(\cB\genf,T)} & {\mathrm{Ext}^1_{\mathrm{an}}(B,T)} \\
	{\mathrm{Ext}^1_{\mathrm{Zar}}(\cB^\circ,\cT^\circ)} & {\mathrm{Ext}^1_{\mathrm{an}}(B^\circ,T^\circ).}
	\arrow[hook, from=1-1, to=1-2]
	\arrow[hook, from=1-1, to=2-1]
	\arrow["\sim", from=1-2, to=2-2]
	\arrow["\sim"', from=1-3, to=1-2]
	\arrow["\sim", from=1-3, to=2-2]
	\arrow["\sim", from=2-1, to=2-2]
\end{tikzcd}\]
    Moreover, for every $[\cE^\circ]\in \mathrm{Ext}^1_{\mathrm{Zar}}(\cB^\circ,\cT^\circ)$ there exists a finite separable affinoid field extension $(K', K'^+)$ such that $[\cE^\circ]_{(K', K'^\circ)}$ lies in the image of $\smash{\mathrm{Ext}^1_{\mathrm{Zar}}(\cB_{(K', K'^+)},\cT_{(K', K'^+)})}$.
    
    Furthermore, all maps are isomorphisms if $k$ is separably closed.
\end{lemma}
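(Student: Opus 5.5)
The plan is to build $B$ by lifting from the rank one fiber, to prove each arrow of the diagram by reducing to $\SKKc$ or to the reduction over $K^+/K^{\circ\circ}$, and to handle the surjectivity statement by an algebraic extension result followed by gluing.

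\textbf{Construction and uniqueness of $B$.} Since $\cB^\circ$ is a formal abelian scheme, $(\cB^\circ)\genf=(\cB\genf)^\circ$ is an abeloid variety over $\SKKc$ with good reduction. By \cref{l:abeloidEquivalence} it lifts uniquely to an abeloid variety $B$ over $\SKKp$ with $B^\circ=(\cB\genf)^\circ$.

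Because $B$ is partially proper, \cref{p:valuative_strong_criterion} extends this identification uniquely to a morphism $\iota\colon\cB\genf\to B$. By the same uniqueness, $\iota$ is a homomorphism. The space $\cB\genf$ is separated, since formal semi-abelian schemes are separated, and $\iota^\circ$ is an isomorphism. Hence \cref{l:circ_reflects_tft_smooth}(2) shows that $\iota$ is an open immersion.

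For uniqueness, let $B'$ be another abeloid variety containing $\cB\genf$ as an open subgroup. Then $(\cB\genf)^\circ\subseteq B'^\circ$ is an open subgroup which is proper, hence closed. Since $B'^\circ$ is connected, the two coincide. Full faithfulness of $(-)^\circ$ on partially proper spaces then identifies $B'$ with $B$ compatibly with the embeddings of $\cB\genf$.

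\textbf{The diagram.} Since $T$ and $\cT$ are split, I reduce everything to $\dG_m$-factors.
\begin{itemize}
\item Diagonal arrow $\mathrm{Ext}^1_{\mathrm{an}}(B,T)\to\mathrm{Ext}^1_{\mathrm{an}}(B^\circ,T^\circ)$: this is \cref{l:Ext_abeloid_circ_iso}.
\item Middle vertical arrow: I repeat the proof of \cref{l:Ext_abeloid_circ_iso} for $\cB\genf$. This uses two inputs.
  \begin{itemize}
  \item Rigidified $\dG_m$-torsors on $(\cB\genf)^i$ agree with those on their $\circ$-fibers, by \cref{r:cohomology_comparison} applied to the inclusion $(\cB\genf)^\circ\subseteq\cB\genf$ together with \cref{l:dense_cocont}.
  \item The groups $\mathrm{Mor}_e((\cB\genf)^i,\dG_m)$ vanish, because they inject into $\mathrm{Mor}_e((B^\circ)^i,\dG_m)=0$ by \cref{p:valuative_strong_criterion}.
  \end{itemize}
\item Top right arrow (restriction along $\iota$): it is an isomorphism because the triangle commutes.
\item Bottom arrow: this is the classical rank one statement that extensions of the good reduction abeloid variety $B^\circ$ by a split torus are exactly the generic fibers of formal extensions of $\cB^\circ$ by $\cT^\circ$, as in Lütkebohmert's book.
\item Left vertical arrow (injectivity): I use \cref{p:pushout_equivalence}(2). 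An extension of $\cB$ by $\cT$ is the same as an extension over $K^\circ$, an extension of the reductions over $V:=K^+/K^{\circ\circ}$, and an identification over $k$. Two extensions with isomorphic $\circ$-fibers then differ only in the gluing datum. Since $\mathrm{Hom}(\cB_k,\cT_k)=0$, isomorphisms of extensions are unique, so the gluing data must agree.
\item Top left arrow: it is injective because the composite through the bottom row is.
\end{itemize}

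\textbf{Surjectivity after extension.} Given $[\cE^\circ]$, write $\cE^\circ_k$ for its reduction, which is an extension of $\cB_k$ by $\cT_k$. By \cref{p:pushout_equivalence}, it suffices to extend $\cE^\circ_k$ to an extension of $\cB_V$ by $\cT_V$.

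The scheme $\cB_V$ is semi-abelian over the valuation ring $V$ with fraction field $k$, and its generic fiber $\cB_k$ is abelian. If $k$ is separably closed, \cref{l:semi-abelian_ext} gives $\mathrm{Ext}^1(\cB_V,\dG_m)\cong\mathrm{Ext}^1(\cB_k,\dG_m)$, which yields surjectivity and hence the final claim that all maps are isomorphisms.

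In general I will proceed in three steps.
\begin{enumerate}
\item Extend the class over $V^{\mathrm{sep}}$, a valuation ring of $k^{\mathrm{sep}}$ dominating $V$.
\item Descend it, by the limit argument in the proof of \cref{l:semi-abelian-model} (using \cref{l:semi-abelian_approximation} and limit results for morphisms), to some $V'$ with finite separable fraction field $k'/k$, compatibly with $\cE^\circ_{k'}$.
\item Realize $k'$ as the residue field of a finite separable extension $K'/K$. This is possible because $K$ is complete, so separable polynomials lift by Hensel's lemma. Then take $K'^+$ to be the preimage of $V'$, as in the proof of \cref{t:uniformization_analytic}.
\end{enumerate}

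The main obstacle is step 2: making the descended extension over $V'$ restrict over $k'$ exactly to $\cE^\circ_{k'}$, so that the gluing datum exists. I plan to handle this by descending the isomorphism over $k^{\mathrm{sep}}$ together with the extension. Failing that, I would enlarge $k'$ further using injectivity of $\mathrm{Ext}^1(\cB_{k'},\dG_m)\to\mathrm{Ext}^1(\cB_{k^{\mathrm{sep}}},\dG_m)$ up to finite level.
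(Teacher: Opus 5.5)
\textbf{Gap: injectivity of the left vertical arrow.} Your proof that $\mathrm{Ext}^1_{\mathrm{Zar}}(\cB,\cT)\to\mathrm{Ext}^1_{\mathrm{Zar}}(\cB^\circ,\cT^\circ)$ is injective assumes what has to be shown. Under \cref{p:pushout_equivalence}, an extension $\cG$ is a triple: its $\SKc$-fiber, its reduction $\cG_V$ over $V=K^+/K^{\circ\circ}$, and a gluing isomorphism over $k$. Knowing that two extensions have isomorphic $\SKc$-fibers says nothing a priori about their $V$-components. So the claim that they ``differ only in the gluing datum'' presupposes the key point: an extension $\cG_V$ of $\cB_V$ by $\cT_V$ whose generic fiber $\cG_k$ splits is itself split.

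Your observation $\mathrm{Hom}(\cB_k,\cT_k)=0$ only gives \emph{uniqueness} of a splitting, not its \emph{existence} over $V$. You also cannot invoke \cref{l:semi-abelian_ext}, since $k$ need not be separably closed.

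The missing input, which is exactly what the paper uses, is \cref{p:semi-abelianAreNeron}:
\begin{enumerate}
\item $\cT_V$ is the marked Néron model of $\cT_k$.
\item $\cG_V$ is smooth and quasi-separated, with connected fibers and a unit section.
\item Hence a retraction $r_k=(r^\circ)_k\colon\cG_k\to\cT_k$ extends to a pointed morphism $\cG_V\to\cT_V$. This is a homomorphic retraction by flatness of $\cG_V$ and separatedness of $\cT_V$.
\item Gluing it with $r^\circ$ via \cref{p:pushout_equivalence} splits $\cG$.
\end{enumerate}
Because you deduce injectivity of $\mathrm{Ext}^1_{\mathrm{Zar}}(\cB,\cT)\to\mathrm{Ext}^1_{\mathrm{an}}(\cB\genf,T)$ from the left vertical arrow, that hook arrow is also unproven as written. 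In the separably closed case you could repair this with \cref{l:semi-abelian_ext}, but the lemma asserts injectivity for arbitrary $k$.

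\textbf{Smaller omission.} In the surjectivity part, \cref{p:pushout_equivalence} only hands you a formal group $\cH$ with maps $\cT\to\cH\to\cB$. You still have to check that this is a Zariski-locally split extension, i.e.\ an element of $\mathrm{Ext}^1_{\mathrm{Zar}}(\cB,\cT)$. The paper does this in two steps: it gets local sections on the reduction from Hilbert 90, then lifts them along the thickenings $\Spec(K^+/(\varpi K^\circ)^n)$ using smoothness of $\cH\to\cB$ over affine opens.

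\textbf{What works.}
\begin{itemize}
\item Your construction of $B$ matches the paper.
\item Your direct proof that the middle vertical arrow is an isomorphism is a valid alternative to the paper's route. You rerun the Ribeiro--Rosengarten argument for $\cB\genf$, using \cref{r:cohomology_comparison} and the vanishing of $\mathrm{Mor}_e((\cB\genf)^i,\dG_m)$. The paper instead combines injectivity, obtained by lifting retractions through \cref{p:valuative_strong_criterion}, with the fact that the composite is the isomorphism of \cref{l:Ext_abeloid_circ_iso}.
\item Your plan to descend the extension together with the identification over $k^{\mathrm{sep}}$ to a finite separable $k'$, and then lift $k'$ to $K'$, is what the paper's brief ``approximating a separable closure of $k$ by finite subextensions'' amounts to.
\end{itemize}
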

\begin{proof}
    Let $B$ be the unique abeloid variety extending $({\cB\genf})^\circ$ to $\SKKp$, see \cref{l:abeloidEquivalence}. By \cref{p:valuative_strong_criterion,l:circ_reflects_tft_smooth} there is a canonical homomorphism $\cB\genf\to B$ that is an open immersion. The uniqueness claim follows from the properness of $({\cB\genf})^\circ$ and \cref{p:valuative_strong_criterion}.

    We now turn to the comparison of the extension groups appearing in the diagram. The proof of \cite[Thm.\ 3.1]{RibeiroRosengarten2025Extensions} implies that there are natural isomorphisms
    \begin{align*}
        \mathrm{Ext}^1_\mathrm{Zar}(\cB^\circ, \dG_{m,\SKc})\cong\  & H^1_m(\cB^\circ,\dG_m)=\cB^{\circ,\vee}(\SKc)\\
        \mathrm{Ext}^1_\mathrm{an}(B^\circ, \dG_{m,\SKKc})\cong\  & H^1_m(B^\circ,\dG_m)=B^{\circ,\vee}(\SKKc).
    \end{align*}
    Thus
    there is the following commutative square where the vertical maps are isomorphisms
    % https://q.uiver.app/#q=WzAsNCxbMCwwLCJcXG1hdGhybXtFeHR9XjFfe1xcbWF0aHJte1phcn19KFxcY0JeXFxjaXJjLCBcXGNUXlxcY2lyYykiXSxbMCwxLCJcXG1hdGhybXtIb219KFxcY2hpKFxcY1ReXFxjaXJjKSxcXGNCXntcXGNpcmMsXFx2ZWV9KSJdLFsxLDAsIlxcbWF0aHJte0V4dH1eMV97XFxtYXRocm17YW59fShCXlxcY2lyYywgVF5cXGNpcmMpIl0sWzEsMSwiXFxtYXRocm17SG9tfShcXGNoaShUXlxcY2lyYyksQl57XFxjaXJjLFxcdmVlfSkuIl0sWzAsMl0sWzAsMV0sWzIsM10sWzEsMywiKC0pXFxnZW5mIl1d
\[\begin{tikzcd}[row sep=scriptsize]
	{\mathrm{Ext}^1_{\mathrm{Zar}}(\cB^\circ, \cT^\circ)} & {\mathrm{Ext}^1_{\mathrm{an}}(B^\circ, T^\circ)} \\
	{\mathrm{Hom}(\chi(\cT^\circ),\cB^{\circ,\vee})} & {\mathrm{Hom}(\chi(T^\circ),B^{\circ,\vee}).}
	\arrow[from=1-1, to=1-2]
	\arrow[from=1-1, to=2-1]
	\arrow[from=1-2, to=2-2]
	\arrow["{(-)\genf}", from=2-1, to=2-2]
\end{tikzcd}\]
    Note that $(\cB^{\circ,\vee})\genf\cong B^{\circ,\vee}$, see, e.g., \cite[Thm.\ 6.1.1]{Luetkebohmert2016RigidGeometryCurves}. Hence the lower map in the square and in the statement is an isomorphism. The right vertical map in the statement is injective. 
    Indeed, if an extension $T\to X\to \cB\genf$ is given over $\SKKp$, then a retract $X^\circ\to T^\circ$ lifts to a retract $X\to T$ by \cref{p:valuative_strong_criterion}. Similarly  the left vertical map in the statement is injective, where one instead uses \cref{p:pushout_equivalence,p:semi-abelianAreNeron} to lift the retract. The composition $\mathrm{Ext}_{\mathrm{an}}^1(B,T)\to \mathrm{Ext}_{\mathrm{an}}^1(B^\circ,T^\circ)$ is an isomorphism by \cref{l:Ext_abeloid_circ_iso}.

    It remains to show the last two statements of the lemma. 
    To this end, consider the homomorphism defined by pullback
    \begin{equation}\label{eq:proof_ext}\tag{$*$}
        \mathrm{Ext}^1_{\mathrm{Zar}}(\cB,\cT)\longrightarrow \mathrm{Ext}^1_{\mathrm{Zar}}(\cB^\circ,\cT^\circ)\times_{\mathrm{Ext}^1_{\mathrm{fppf}}(\cB_k,\cT_k)}\mathrm{Ext}^1_{\mathrm{fppf}}(\cB_{K^+/K^{\circ\circ}},\cT_{K^+/K^{\circ\circ}}).
    \end{equation}
    We claim that it is an isomorphism. For injectivity suppose that an extension $0\to \cT\to \cG\to \cB\to 0$ admits a retract homomorphism  $r^\circ\colon\cG^\circ\to \cT^\circ$ on the $\SKc$-fiber. Since $\cT_{K^+/K^{\circ\circ}}$ is the marked Néron model of $\cT_k$ by \cref{p:semi-abelianAreNeron}, $r_k$ lifts to a retract $\cG_{K^+/K^{\circ\circ}}\to\cT_{K^+/K^{\circ\circ}}$. By \cref{p:pushout_equivalence}, this gives rise to a retract $\cG\to \cT$. For surjectivity note that an element on the right-hand side determines a sequence of homomorphisms $\cT\to \cH\to \cB$ over $\SKp$ by \cref{p:pushout_equivalence}. Then $\cT\to \cH$ is seen to be the kernel and $\cH\to \cB$ is smooth. We need to show that it is Zariski locally split. Let $x\in \cB$ be given. By Hilbert 90, the reduction $\cH_{K^+/K^{\circ\circ}}\to \cB_{K^+/K^{\circ\circ}}$ is Zariski locally split. If $x\in \cU\subseteq \cB$ is an affine neighborhood which admits a section to $\cH$ on the reduction, then smoothness of $\cH\times_{\cB}\cU\to \cU$ implies that the section lifts to a section $\cU\to \cH\times_{\cB}\cU$. Indeed, one can lift the section to a section over $\Spec(K^+/{\varpi K^\circ)}$ for a suitable choice of $\varpi\in K^{\circ\circ}\setminus \{0\}$ using \spcite{01ZL}, and then use that $\cU$ is affine and formal smoothness to lift this section along thickenings $\Spec(K^+/{(\varpi K^\circ)^n)}\to \Spec(K^+/{(\varpi K^\circ)^{n+1})}$. This shows
    that \cref{eq:proof_ext} is an isomorphism.
   
    Since finite separable residue extensions lift, 
    the last two statements of the lemma are reduced to the analogous statements for $\mathrm{Ext}^1(\cB_{K^+/K^{\circ\circ}},\cT_{K^+/K^{\circ\circ}})\to \mathrm{Ext}^1(\cB_k,\cT_k)$. This is an isomorphism when $k$ is separably closed by \cref{l:semi-abelian_ext}. The remaining statement follows
   by approximating a separable closure of $k$ by finite subextensions.
\end{proof}

We now combine the preceding results to construct Néron models of abeloid varieties over $\SKKp$.

\begin{theorem}\label{t:uniformization_formal1}
    Let $A$ be an abeloid variety over $\SKKp$.
    After a finite separable affinoid field extension, $A$ 
    admits a marked Néron model $\mathcal{E}$ that fits into a short exact sequence
        % https://q.uiver.app/#q=WzAsNSxbMSwwLCJcXGNUIl0sWzIsMCwiXFxvdmVybGluZXtcXHJlc3Ryc21hbGx7RX17XFxjQlxcZ2VuZn19Il0sWzMsMCwiXFxjQiJdLFswLDAsIjAiXSxbNCwwLCIwIl0sWzMsMF0sWzAsMV0sWzEsMl0sWzIsNF1d
        \[\begin{tikzcd}
        	0 & \cT & {\cE} & \cB & 0
        	\arrow[from=1-1, to=1-2]
        	\arrow[from=1-2, to=1-3]
        	\arrow[from=1-3, to=1-4]
	        \arrow[from=1-4, to=1-5]
        \end{tikzcd}\]
        where $\cT$ is a split formal torus and $\cB$ is a formal semi-abelian scheme whose $\SKc$-fiber is a formal abelian scheme.
        
        If $k$ is algebraically closed or $K^+/K^{\circ\circ}$ is a discrete valuation ring, then, after such a base change, $A$ admits a strong marked Néron model, and $\cE$ is its fiberwise identity component.
\end{theorem}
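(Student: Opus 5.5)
We start from the rank one picture. After a finite separable field extension, \cref{t:classical_uniformization_formal1} applied to $A^\circ$ gives a marked Néron model $\cE^\circ\to\SKc$ sitting in an extension $0\to\cT^\circ\to\cE^\circ\to\cB^\circ\to 0$. Here $\cB^\circ$ is a formal abelian scheme and $\cT^\circ$ is a split formal torus. We then enlarge $K^+$ to a dominating valuation ring, exactly as in the proof of \cref{t:uniformization_analytic}. The reduction $\cB^\circ_k$ is an abelian variety over $k$, which is the fraction field of the valuation ring $V:=K^+/K^{\circ\circ}$. By \cref{l:semi-abelian-model}, after a further finite separable extension of $k$ and a choice of dominating valuation ring of the residue field, $\cB^\circ_k$ extends to a semi-abelian scheme $\cB_V\to\Spec(V)$. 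Since finite separable residue extensions lift to finite separable extensions of $K$, this base change is realized by a finite separable affinoid field extension. It preserves the rank one data after base change, since marked Néron models of this semi-abelian type are stable under base change by \cite[Prop.\ 5.4.6]{Luetkebohmert2016RigidGeometryCurves}. \Cref{p:pushout_equivalence} now glues $\cB^\circ$ and $\cB_V$ to a formal semi-abelian scheme $\cB\to\SKp$ whose $\SKc$-fiber is $\cB^\circ$. The split torus $\cT^\circ$ extends trivially to $\cT$.

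Next we extend the extension class. By the "Moreover" part of \cref{l:Ext_formal_agrees}, after one more finite separable affinoid field extension, $[\cE^\circ]$ is the image of some $[\cE]\in\mathrm{Ext}^1_{\mathrm{Zar}}(\cB,\cT)$. Then $\cE$ is a smooth commutative formal group whose reduction is an extension of the semi-abelian $\cB_V$ by a split torus. This reduction is semi-abelian, so $\cE$ is a formal semi-abelian scheme. Its generic fiber $\cE\genf$ has $\SKKc$-fiber $(\cE^\circ)\genf$, which is an open subgroup of $A^\circ$. Applying \cref{p:valuative_strong_criterion} and \cref{l:circ_reflects_tft_smooth}(2) to the partially proper $A$, we get a homomorphism $\cE\genf\to A$ that is an open immersion onto an open subgroup. \Cref{p:semi-abelian_subgroup} then shows that $\cE$ is the marked Néron model of $A$. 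The main point to verify here is that $\cE\genf\to A$ is an open immersion. I expect this to follow because $\cE\genf$ is separated and its $\circ$-fiber is an open immersion.

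For the strong marked Néron model, the plan is to run the same gluing but replace $\cE_V$ by a full Néron model over $V$. If $k$ is algebraically closed, then $V$ is Henselian with algebraically closed residue fields, as in the argument of \cref{e:Neron_excistence}. So \cref{t:Neron_existence_V} applies to the semi-abelian variety $\cE_k$ and yields a Néron model $G_V$ with identity component $\cE_V$. If $V$ is a discrete valuation ring, we use the classical lft-Néron model of a semi-abelian variety over $V$ \cite[Prop.\ 10.1.7]{BoschLuetkebohmertRaynaud1990NeronModels}, after passing to the strict Henselization or a suitable finite extension as needed. Its identity component is $\cE_V$ by \cref{p:semi-abelianAreNeron}. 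The generic fiber of $G_V$ is $\cE_k$, which is connected, so gluing via \cref{p:pushout_equivalence} gives a smooth connected formal group $\cG$ containing $\cE$ as its fiberwise identity component.

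Finally we prove the strong universal property. Take a smooth connected pointed $\cX$ and a pointed map $\cX\genf\to A$. By \cref{l:smooth_int_closed} and the rank one theory, the $\circ$-fiber factors through $\cE^\circ$, because $\cX^\circ$ is connected and the marked and strong notions agree at rank one. On the reduction over $V$, the Néron mapping property of $G_V$ extends the map $\cX_k\to\cE_k$ to $\cX_V\to G_V$. Gluing these via \cref{p:pushout_equivalence} gives the factorization; uniqueness follows from separatedness. I expect the main obstacle to be this last step: compatibility of the two maps over $k$ and the use of connectedness when no base change to fiberwise connected objects is available.
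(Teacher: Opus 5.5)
Your proposal is correct and follows the paper's proof essentially step by step. In both, one takes the rank one model from \cref{t:classical_uniformization_formal1}, extends $\cB^\circ_k$ by \cref{l:semi-abelian-model}, glues via \cref{p:pushout_equivalence}, lifts the extension class by \cref{l:Ext_formal_agrees}, and extends the open immersion to conclude with \cref{p:semi-abelian_subgroup}; for the strong version one glues $\cE^\circ$ with the algebraic Néron model of $\cE_k$ over $K^+/K^{\circ\circ}$ and checks the universal property as in the proof of \cref{p:semi-abelian_subgroup}, using \cref{l:smooth_int_closed}(2).

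The only thing to drop is your hedge about strict Henselization in the discretely valued case. Semi-abelian varieties admit lft-Néron models over any discrete valuation ring, so no base change is needed there, and a strict Henselization would not be a finite separable affinoid field extension anyway.
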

\begin{proof}
    Let $K'/K$ be a finite separable field extension such that the conclusions of \cref{t:classical_uniformization_formal1} hold for $A^\circ\times_{\SKKc} \Spa(K', K'^\circ)$. By \spcite{00IA} there exists a valuation ring $V'$ of the residue field of $K'^\circ$ dominating $K^+/{K^{\circ\circ}}$. Then  the preimage of $V'$ under the reduction map $K'^\circ\to K'^\circ/K'^{\circ\circ}$
    is a valuation ring of $K'$ dominating $K^+$. It is open as it contains $K'^{\circ\circ}$, and therefore defines an affinoid field $S$.
    We replace $A$ by the base change of $A$ to $S$, and $\SKKp$ by $S$.

    Let $0\to \cT^\circ\to\cE^\circ\to \cB^\circ\to 0$ be the extension of the marked Néron model of $A^\circ\to \SKKc$ as in \cref{t:classical_uniformization_formal1}. We may further choose $S$ such that the reduction $\cB^\circ\times_{\SKc}\Spec(k)$ extends to a semi-abelian scheme $\cB_{K^+/K^{\circ\circ}}$ over $\Spec(K^+/K^{\circ\circ})$, see \cref{l:semi-abelian-model}. By \cref{p:pushout_equivalence} this induces an extension of $\cB^\circ$ to a formal semi-abelian scheme $\cB$ over $\SKp$. After replacing $S$ by a further finite separable extension, the extension $\cE^\circ$ lifts to an extension $\cE$ of $\cB$ by a lift of the split formal torus $\cT$, see \cref{l:Ext_formal_agrees}. By \cref{p:valuative_strong_criterion,l:circ_reflects_tft_smooth}, the open immersion $(\cE^\circ)\genf\to A$ extends to an open immersion $\cE\genf\to A$ containing $e_A$. It  is the marked Néron model of $A$ by \cref{p:semi-abelian_subgroup}.

    The existence of a strong marked Néron model follows in the same way: We again start from $\cE^\circ$ and extend it to $\SKp$ using \cref{p:pushout_equivalence} and the existence of algebraic Néron models on the reduction, i.e., \cref{t:Neron_existence_V} and \cref{e:Neron_excistence}(1) in the algebraically closed case and \cite[Cor.\ 1.3.2, Prop.\ 10.1.7]{BoschLuetkebohmertRaynaud1990NeronModels} in the discretely valued case.
    Then we argue exactly as in the proof of \cref{p:semi-abelian_subgroup} using \cref{l:smooth_int_closed}(2) that it has the claimed universal property.
\end{proof}

We note that the proof concerning strong marked Néron models in the above theorem only uses the existence of Néron models over $\Spec(K^+/K^{\circ\circ})$ for semi-abelian varieties over $\Spec(k)$. Thus, if $K^+/K^{\circ\circ}$ satisfies the condition in \cref{t:Neron_existence_V}, then one still obtains the existence of strong marked Néron models after possibly a further finite affinoid base field extension.

\subsection{Relation to Raynaud uniformization}
In analogy to the classical situation of an abeloid variety $A^\circ\to \SKKc$ where its Raynaud uniformization naturally arises from the pushout of the generic fiber of the marked Néron model of $A^\circ$ (cf.\ \cref{t:classical_uniformization_formal1}), we can now show the following.

\begin{corollary}\label{c:uniformization_formal2}
    Let $A$ be an abeloid variety over $\SKKp$. 
    Assume that $(K,K^+)$ is big enough so that $A$ admits a marked N\'eron model $\mathcal{E}$ which is an extension of a formal semi-abelian scheme $\cB$ with formal abelian $\SKc$-fiber by a split formal torus $\cT$.
    \begin{enumerate}[leftmargin=3em]
        \item There exists an abeloid variety $\smash{B\to \SKKp}$ that contains $\smash{\cB\genf}$ as an open subgroup, and it is unique up to unique isomorphism over $\smash{\cB\genf}$. Let $T$ be the full algebraic torus containing $\cT\genf$. Then the pushout of the generic fiber 
        uniquely extends to an extension $T\to E\to B$, i.e., the lower right square in the following diagram is cartesian
        % https://q.uiver.app/#q=WzAsMTUsWzEsMCwiXFxjVFxcZ2VuZiJdLFszLDAsIlxcY0JcXGdlbmYiXSxbMSwxLCJUIl0sWzIsMSwiRSciXSxbMywxLCJcXGNCXFxnZW5mIl0sWzAsMCwiMCJdLFswLDEsIjAiXSxbNCwwLCIwIl0sWzQsMSwiMCJdLFsyLDAsIlxcY0VcXGdlbmYiXSxbMCwyLCIwIl0sWzQsMiwiMCJdLFsxLDIsIlQiXSxbMiwyLCJFJ157Y30iXSxbMywyLCJCIl0sWzAsMl0sWzIsM10sWzMsNF0sWzEsNCwiIiwwLHsibGV2ZWwiOjIsInN0eWxlIjp7ImhlYWQiOnsibmFtZSI6Im5vbmUifX19XSxbMywwLCIiLDAseyJzdHlsZSI6eyJuYW1lIjoiY29ybmVyIn19XSxbNSwwXSxbNiwyXSxbMSw3XSxbNCw4XSxbMCw5XSxbOSwxXSxbOSwzXSxbMiwxMiwiIiwwLHsibGV2ZWwiOjIsInN0eWxlIjp7ImhlYWQiOnsibmFtZSI6Im5vbmUifX19XSxbMTIsMTNdLFsxMywxNF0sWzMsMTNdLFs0LDE0XSxbMywxNCwiIiwwLHsic3R5bGUiOnsibmFtZSI6ImNvcm5lciJ9fV0sWzEwLDEyXSxbMTQsMTFdXQ==
\[\begin{tikzcd}[row sep=0.9em]
	0 & {\cT\genf} & {\cE\genf} & {\cB\genf} & 0 \\
	0 & T & {E'} & {\cB\genf} & 0 \\
	0 & T & {E} & B & 0.
	\arrow[from=1-1, to=1-2]
	\arrow[from=1-2, to=1-3]
	\arrow[from=1-2, to=2-2]
	\arrow[from=1-3, to=1-4]
	\arrow[from=1-3, to=2-3]
	\arrow[from=1-4, to=1-5]
	\arrow[equals, nfold, from=1-4, to=2-4]
	\arrow[from=2-1, to=2-2]
	\arrow[from=2-2, to=2-3]
	\arrow[equals, nfold, from=2-2, to=3-2]
	\arrow["\lrcorner"{anchor=center, pos=0.125, rotate=180}, draw=none, from=2-3, to=1-2]
	\arrow[from=2-3, to=2-4]
	\arrow[from=2-3, to=3-3]
	\arrow["\lrcorner"{anchor=center, pos=0.125}, draw=none, from=2-3, to=3-4]
	\arrow[from=2-4, to=2-5]
	\arrow[from=2-4, to=3-4]
	\arrow[from=3-1, to=3-2]
	\arrow[from=3-2, to=3-3]
	\arrow[from=3-3, to=3-4]
	\arrow[from=3-4, to=3-5]
\end{tikzcd}\]
    \item The open immersion $\cE\genf\subseteq A$ uniquely extends as a homomorphism $\cE\genf\subseteq E\to A$, and the latter map is the Raynaud uniformization of $A$ (in the sense of \cref{t:uniformization_analytic}).
    \end{enumerate}
\end{corollary}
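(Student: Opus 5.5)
Part (1) is mostly formal once we use \cref{l:Ext_formal_agrees}. That lemma already gives the abeloid variety $B$ containing $\cB\genf$ as an open subgroup, unique up to unique isomorphism over $\cB\genf$. Let $E'$ be the pushout of $\cE\genf$ along $\cT\genf\to T$; it is an extension of $\cB\genf$ by $T$, so $[E']\in\mathrm{Ext}^1_{\mathrm{an}}(\cB\genf,T)$. The restriction map $\mathrm{Ext}^1_{\mathrm{an}}(B,T)\to\mathrm{Ext}^1_{\mathrm{an}}(\cB\genf,T)$ in \cref{l:Ext_formal_agrees} is an isomorphism. Hence there is an extension $0\to T\to E\to B\to 0$, unique up to unique isomorphism, whose pullback to $\cB\genf$ is $E'$; this says exactly that the lower right square is cartesian. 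Uniqueness of the isomorphism $E'\cong E\times_B\cB\genf$ needs no retraction $\cB\genf\to T$. Such a retraction would come from a homomorphism $\cB\genf\to T$, which vanishes on the $\SKKc$-fiber because $(\cB\genf)^\circ$ is proper and connected. By \cref{p:valuative_strong_criterion} applied to the partially proper $T$, it therefore vanishes over $\SKKp$.

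For (2) I would reduce to the rank one statement \cref{t:classical_uniformization_formal1}. First note that $\cE^\circ$ is the marked Néron model of $A^\circ$ in the classical sense. Its generic fiber $(\cE\genf)^\circ$ is an open subgroup of $A^\circ$, and $\cE^\circ$ is an extension of the formal abelian scheme $\cB^\circ$ by $\cT^\circ$. So $\cE^\circ$ is a formal semi-abelian model of an open subgroup, and the rank one version of \cref{p:semi-abelian_subgroup}, i.e.\ \cite[Prop.\ 5.4.6]{Luetkebohmert2016RigidGeometryCurves}, applies. Taking $(-)^\circ$ of the diagram in (1) then gives the following. Since $B^\circ$ is the abeloid variety containing $(\cB\genf)^\circ=(\cB^\circ)\genf$, which is all of $B^\circ$ by properness, we get $E^\circ=E'^\circ$, the classical pushout. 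By \cref{t:classical_uniformization_formal1}(1)--(2), $E^\circ$ is the Raynaud extension of $A^\circ$. Moreover, the open immersion $(\cE\genf)^\circ\subseteq A^\circ$ extends uniquely to a homomorphism $E^\circ\to A^\circ$, and this homomorphism is the Raynaud uniformization.

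Next, lift this to $\SKKp$. By \cref{t:uniformization_analytic}(3), the uniformization cross of $A$ exists and its $\SKKc$-fiber is the cross of $A^\circ$. Both $E$ and the Raynaud extension of $A$ are extensions of abeloid varieties by $T$, hence partially proper. So \cref{p:valuative_strong_criterion}, together with \cref{l:Ext_abeloid_circ_iso}, identifies $E$ with the Raynaud extension over $\SKKp$, compatibly with $B$ and $T$. Under this identification the uniformization map $E\to A$ is the unique lift of $E^\circ\to A^\circ$, again by full faithfulness of $(-)^\circ$ on partially proper spaces. Its restriction to $\cE\genf$ agrees with the given open immersion, because $A$ is separated and the two maps agree after applying $(-)^\circ$ (the injectivity part of \cref{p:valuative_strong_criterion}).

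Finally, uniqueness of the extension $\cE\genf\subseteq E\to A$ as a homomorphism. Let $\phi,\psi\colon E\to A$ be two homomorphisms restricting to the inclusion on $\cE\genf$. Their difference kills $\cE\genf$, and hence kills $T$, since $T$ is generated by $\cT\genf$ together with the torus part of $E'$. Equivalently, the difference factors through $E'/\cE\genf\cong T/\cT\genf$ on the open part. The cleanest route is to pass to the $\SKKc$-fiber, where uniqueness is part of \cref{t:classical_uniformization_formal1}(2), and then apply the injectivity of \cref{p:valuative_strong_criterion}, using that $A$ is separated.

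The step I expect to need most care is the compatibility claim $E^\circ\cong E'^\circ$ with the classical pushout. This requires checking that pushouts and pullbacks of extensions commute with $(-)^\circ$; since $(-)^\circ$ is a base change of sheaves, I expect this to be routine. The step also requires that $(\cB\genf)^\circ\to B^\circ$ is an isomorphism, which holds by properness of $\cB^\circ$.
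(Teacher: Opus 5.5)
Your proposal is correct and follows the same route as the paper. Part (1) comes from the isomorphism $\mathrm{Ext}^1_{\mathrm{an}}(B,T)\cong\mathrm{Ext}^1_{\mathrm{an}}(\cB\genf,T)$ of \cref{l:Ext_formal_agrees}, and part (2) comes from passing to the $\SKKc$-fiber, using $(\cB\genf)^\circ=B^\circ$ and \cref{t:classical_uniformization_formal1}, and lifting back by full faithfulness of $(-)^\circ$ on partially proper spaces. Your extra steps only make explicit what the paper leaves implicit (that $\Hom(\cB\genf,T)=0$, and that $\cE^\circ$ is the classical marked N\'eron model by \cite[Prop.\ 5.4.6]{Luetkebohmert2016RigidGeometryCurves}), and you rightly drop the invalid remark that $T$ is ``generated'' by $\cT\genf$ in favour of the fiberwise uniqueness argument.
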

\begin{proof}
    The first sentence
    of (1) is a part of \cref{l:Ext_formal_agrees}. The existence and uniqueness of the extension $T\to E\to B$ is the upper right horizontal isomorphism in \cref{l:Ext_formal_agrees}.

     Note that $A$ admits a Raynaud uniformization
     by \cref{t:classical_uniformization_formal1}(2) and \cref{t:uniformization_analytic}(3). After taking the $\SKKc$-fiber, the lower row of the diagram is the Raynaud extension of $A^\circ$ since $(\smash{\cB\genf})^\circ = B^\circ$.
     As passage to the $\SKKc$-fiber induces bijections on morphisms into $E$ and $A$,
     (2) follows from the analogous classical statement.
\end{proof}

The preceding description also identifies precisely when the classical rank one picture carries over to $\SKKp$.

\begin{corollary}\label{c:uniformization_formal3}
    Let $A$ be an abeloid variety over $\SKKp$. Assume that {$(K,K^+)$} is big enough so that $A$ admits a marked N\'eron model $\mathcal{E}$ which is an extension of a formal semi-abelian scheme $\cB$ with formal abelian $\SKc$-fiber by a split formal torus $\cT$. Then the following statements are equivalent:
    \begin{enumerate}
        \item The generically good reduction part $B$ in the Raynaud uniformization of $A$ has good reduction.
        \item The Raynaud extension of $A$ arises as the pushout of the generic fiber of its marked Néron model.
        \item The fiberwise torus rank of the marked Néron model is constant (i.e., $\cB$ is a formal abelian scheme).
    \end{enumerate}
\end{corollary}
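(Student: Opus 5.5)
I will prove the cycle (3)$\Rightarrow$(1)$\Rightarrow$(3) and then (2)$\Leftrightarrow$(3). Throughout I use the setup of \cref{c:uniformization_formal2}: $B$ is the abeloid variety containing $\cB\genf$ as an open subgroup, $E\to B$ is the Raynaud extension, and $E'\subseteq E$ is the pushout of $\cE\genf$ along $\cT\genf\subseteq T$. So $E'=E\times_B\cB\genf$. Statement (2) means precisely that $E'=E$, and since $E'\to E$ is the pullback of $\cB\genf\subseteq B$ along the epimorphism $E\to B$, this is equivalent to $\cB\genf=B$.

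\emph{(3)$\Rightarrow$(1).} If $\cB$ is a formal abelian scheme, then $\cB\genf$ is proper by \cref{l:generic_fiber_properties}, hence closed in the separated space $B$. It is also open, and $B$ is connected, so $\cB\genf=B$. Thus $\cB$ is a smooth formal group model of $B$, and $B$ has good reduction. The same argument shows (3)$\Rightarrow$(2).

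\emph{(1)$\Rightarrow$(3).} Let $\cA$ be a smooth formal group model of $B$; by \cref{r:good_reduction_abelian} it is a formal abelian scheme. I will use the marked Néron model property of $\cB$. By \cref{p:semi-abelian_subgroup} applied to the open subgroup $\cB\genf\subseteq B$, the scheme $\cB$ is the marked Néron model of $B$. Now $\cA$ is smooth, fiberwise connected and quasi-separated, and has the unit section. Hence the identity $\cA\genf=B$ factors through a formal morphism $\cA\to\cB$. This gives $B=\cA\genf\subseteq\cB\genf$, so $\cB\genf=B$. Then $\cB$ is a smooth formal model of the proper space $B$, so $\cB$ is proper by \cref{l:generic_fiber_properties}(2) and therefore a formal abelian scheme. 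Consequently its reduction is an abelian scheme over $K^+/K^{\circ\circ}$ and the torus rank is constant, which is (3). (In the formulation of (3), "fiberwise torus rank constant" for $\cE$ means the torus rank of $\cB$'s reduction vanishes everywhere, because its generic fiber over $k$ is abelian.)

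\emph{(2)$\Rightarrow$(3).} From $\cB\genf=B$ as above, the same properness argument shows $\cB$ is proper, hence a formal abelian scheme. Conversely, if the torus rank is constant, the reduction of $\cB$ is semi-abelian with abelian generic fiber and toric rank zero everywhere. It is then an abelian scheme, hence proper, and we are back in the first step.

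The main obstacle I expect is making the "open and proper implies equal" step and the factorization $\cA\to\cB$ rigorous. In particular, one must check that $\cA$ satisfies the hypotheses in \cref{d:markedNeron_K+} (fiberwise connectedness follows because its reduction is an abelian scheme), and that the induced map restricted to generic fibers is the given inclusion.
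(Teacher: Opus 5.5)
Your proof is correct and is essentially the paper's argument. For (1)$\Rightarrow$(3) you use the marked N\'eron property of $\cB$ for $B$ from \cref{p:semi-abelian_subgroup}, then \cref{l:generic_fiber_properties}(2); for (3)$\Rightarrow$(2) you use that the proper open subgroup $\cB\genf$ must exhaust the connected space $B$. You merely arrange the implications as two equivalences instead of the paper's cycle (1)$\Rightarrow$(3)$\Rightarrow$(2)$\Rightarrow$(1).

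The one step you leave unjustified is that a semi-abelian scheme with torus rank zero in every fiber is an abelian scheme. The paper cites \cite[Cor.\ I.2.11]{FaltingsChai1990Degeneration} for exactly this.
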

\begin{proof}
    Adopt the notation in \cref{c:uniformization_formal2}.
    The statements in (3) are equivalent by \cite[Cor.\ I.2.11]{FaltingsChai1990Degeneration} which states that a semi-abelian scheme with constant torus rank zero is an abelian scheme.

    Suppose that (1) holds. Denote the formal abelian group model of $B$ by $\cB'$. As $\cB$ is formal semi-abelian, it is the marked Néron model of $B$ by \cref{p:semi-abelian_subgroup}. Hence $\cB'\genf\to B$ factors through $\cB\genf$, showing that $\cB$ is a model of $B$ and hence proper by \cref{l:generic_fiber_properties}. Thus (1) implies (3).
    Suppose that (3) holds. Then $\cB\genf\to B$ is proper and hence surjective, showing (2).
    Suppose that (2) holds. Then $\cB$ is a group model of $B$, showing (1).
\end{proof}

\subsection{Further properties of N\'eron models}
We begin by showing a more involved mapping property of strong marked N\'eron models.

\begin{lemma}\label{l:strong_Neron_mapping}
	Let $A\to \SKKp$ be an abeloid variety with strong marked Néron model $\cN\to \SKp$. Suppose that $k$ is algebraically closed, and that the fiberwise identity component of $\cN$ is a formal semi-abelian scheme. 
    Let $\cX\to \SKp$ be a smooth connected formal scheme. Then all maps in the following diagram are bijections
	% https://q.uiver.app/#q=WzAsNixbMCwwLCJcXE1vcihcXGNYLCBcXGNOKSJdLFsxLDAsIlxcTW9yKFxcY1hcXGdlbmYsIFxcY05cXGdlbmYpIl0sWzIsMCwiXFx7ZlxcaW4gXFxNb3IoXFxjWFxcZ2VuZiwgQSlcXG1pZCBmKFxcY1hcXGdlbmYpXFxjYXBcXGNOXFxnZW5mXFxuZXEgXFxlbXB0eXNldFxcfSJdLFswLDEsIlxcTW9yKFxcY1heXFxjaXJjLCBcXGNOXlxcY2lyYykiXSxbMSwxLCJcXE1vcigoXFxjWF5cXGNpcmMpXFxnZW5mLCAoXFxjTl5cXGNpcmMpXFxnZW5mKSJdLFsyLDEsIlxce2ZcXGluIFxcTW9yKChcXGNYXlxcY2lyYylcXGdlbmYsIEFeXFxjaXJjKVxcbWlkIGYoKFxcY1heXFxjaXJjKVxcZ2VuZilcXGNhcChcXGNOXlxcY2lyYylcXGdlbmZcXG5lcSBcXGVtcHR5c2V0XFx9Il0sWzAsM10sWzAsMV0sWzMsNF0sWzEsMl0sWzQsNV0sWzEsNF0sWzIsNV1d
\[\begin{tikzcd}[column sep=0.83em,row sep=small]
	{\Mor(\cX, \cN)} & {\Mor(\cX\genf, \cN\genf)} & {\{f\hspace{-0.2mm}\in\hspace{-0.2mm} \Mor(\cX\genf, A)\mid f(\cX\genf)\cap\cN\genf\neq \emptyset\}} \\
	{\Mor(\cX^\circ, \cN^\circ)} & {\Mor((\cX^\circ)\genf, (\cN^\circ)\genf)} & {\{f\hspace{-0.2mm}\in\hspace{-0.2mm} \Mor((\cX^\circ)\genf, A^\circ)\mid f((\cX^\circ)\genf)\cap(\cN^\circ)\genf\neq \emptyset\}.}
	\arrow[from=1-1, to=1-2]
	\arrow[from=1-1, to=2-1]
	\arrow[from=1-2, to=1-3]
	\arrow[from=1-2, to=2-2]
	\arrow[from=1-3, to=2-3]
	\arrow[from=2-1, to=2-2]
	\arrow[from=2-2, to=2-3]
\end{tikzcd}\]
\end{lemma}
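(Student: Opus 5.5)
The bottom row is handled by the rank one theory, and I will transport it upward. For $K^+=K^\circ$, the left bottom map is a bijection because $\cN^\circ$ is smooth and hence integrally closed by \cref{l:smooth_int_closed}(1). Consequently morphisms of generic fibers into $(\cN^\circ)\genf$ extend uniquely to formal models; this is the classical fact for formal models with reduced special fiber, as in \cite[Prop.\ 3.4.1]{Luetkebohmert2016RigidGeometryCurves}. For the bottom right map, injectivity is clear because $(\cN^\circ)\genf\subseteq A^\circ$ is open. For surjectivity, take $f$ meeting $(\cN^\circ)\genf$. Pick a point $y$ of $(\cX^\circ)\genf$ mapping into $(\cN^\circ)\genf$, after a finite extension if needed; since $k$ is algebraically closed we can choose a $K$-rational point and then a section of $\cX^\circ$ through it. Translate by $-f(y)\in(\cN^\circ)\genf(K)$ to make $f$ pointed, and apply the strong marked Néron property of $\cN^\circ$. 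By \cref{l:smooth_int_closed}(2), $\cX^\circ$ is connected, so that property applies. Translating back gives the required factorization.

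Next I prove the vertical maps are bijective. The left vertical map comes from \cref{p:pushout_equivalence}:
\[\Mor(\cX,\cN)\cong \Mor(\cX^\circ,\cN^\circ)\times_{\Mor(\cX_k,\cN_k)}\Mor(\cX_{K^+/K^{\circ\circ}},\cN_{K^+/K^{\circ\circ}}).\]
Given $g^\circ\colon\cX^\circ\to\cN^\circ$, its reduction $g_k$ must extend uniquely over $V:=K^+/K^{\circ\circ}$. The reduction $\cN_V$ is an lft-Néron model of the semi-abelian variety $\cN_k^0$; this holds by construction in \cref{t:uniformization_formal1}, or otherwise by a uniqueness argument. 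The scheme $\cX_V$ is smooth over $V$. Hence the Néron mapping property gives unique extension of $\cX_V\times_V k\to\cN_k$, provided the generic fiber of $\cX_V$ over $V$ is $\cX_k$. It is, since $k=\mathrm{Frac}(V)$. So the left vertical map is bijective.

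The middle vertical map is injective by \cref{p:valuative_strong_criterion}, since $\cN\genf$ is separated. The right vertical map is bijective because $A$ is proper. Indeed, \cref{p:valuative_strong_criterion} gives $\Mor(\cX\genf,A)=\Mor((\cX\genf)^\circ,A^\circ)$, and $(\cX\genf)^\circ=(\cX^\circ)\genf$. The nonemptiness condition matches on both sides: $\cN\genf\cap$ the image is nonempty iff its rank one part is, because every point specializes from $X^\circ$ (\cref{l:circ_embedding}) and $\cN\genf$ is open.

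Bijectivity of the top row follows by diagram chasing. The composite top-left-to-bottom-right is bijective, being left vertical followed by two bottom bijections. The right vertical map is bijective. Hence the top composite $\Mor(\cX,\cN)\to\{f\}$ is bijective. The top left map is injective because $\cN$ is separated and $\cX$ is flat; alternatively, injectivity follows from the middle vertical map being injective. Together these force all top maps, and then the middle vertical map, to be bijective.

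The main obstacle is the left vertical bijection. It needs the reduction $\cN_V$ to be the algebraic Néron model over $V$, with the mapping property for arbitrary smooth $\cX_V$, not merely pointed ones. I would try to justify this from the construction in \cref{t:uniformization_formal1}, which uses \cref{t:Neron_existence_V}, together with uniqueness of strong marked Néron models. The identification of the identity component with a semi-abelian scheme is exactly what that argument requires.
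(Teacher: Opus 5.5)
Most of your proof follows the paper: the left vertical map via \cref{p:pushout_equivalence}, the identification of $\cN_{K^+/K^{\circ\circ}}$ with the Néron model of $\cN_k$ (redo the last paragraph of the proof of \cref{t:uniformization_formal1}, then use uniqueness), injectivity of the middle vertical map, and the diagram chase. Proving the right vertical map bijective directly via partial properness of $A$ is fine.

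\textbf{The gap is your justification of the bottom-left bijection.} It is false that morphisms between generic fibers of smooth (hence integrally closed) formal schemes extend to the formal models. Integral closedness, which matters for the source rather than the target, only gives extensions into \emph{affine} targets. For a non-affine target the map must also be compatible with the specialization maps. For instance, $T\mapsto\varpi^{-1}T$ defines $\dB\to\mathbb{P}^{1,\mathrm{an}}$ between the generic fibers of the smooth formal schemes $\Spf(K^\circ\langle T\rangle)$ and $\widehat{\mathbb{P}}^1_{K^\circ}$. The preimage of $\{\lvert z\rvert\leq 1\}=\sp^{-1}(\mathbb{A}^1_k)$ is the nonempty disc $\{\lvert T\rvert\leq\lvert\varpi\rvert\}\subseteq\sp^{-1}(0)$, which is not $\sp^{-1}$ of an open subset of $\mathbb{A}^1_k$, so the map does not extend.

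What makes the bottom-left map bijective is the Néron property, as in the paper:
\begin{enumerate}
\item Since $k$ is algebraically closed, $\cX_k$ has a $k$-point, which lifts by smoothness to some $x_0\in\cX^\circ(\SKc)$.
\item Given $g\colon(\cX^\circ)\genf\to(\cN^\circ)\genf$, translate by $-g(x_0)\in(\cN^\circ)\genf(\SKKc)=\cN^\circ(\SKc)$.
\item Apply the marked Néron property of $\cN^\circ$.
\end{enumerate}
For step 3 you need that $\cN^\circ$ is the rank one marked Néron model of $A^\circ$. By \cref{l:smooth_int_closed}(2) applied to $\cN$, $\cN^\circ$ is connected. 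Hence it equals the $\SKc$-fiber of the formal semi-abelian fiberwise identity component, and \cref{p:semi-abelian_subgroup} applies. This connectedness is also why $\cN_k=\cN_k^0$. No field extension is needed here, since $g$ lands in $(\cN^\circ)\genf$ everywhere. Your bottom-right argument contains these ingredients, but as written it only gives formal factorizations after a finite extension, so it does not by itself repair the bottom-left step.

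\textbf{A smaller omission concerns the finite extension in the bottom-right surjectivity.} The open set $f^{-1}((\cN^\circ)\genf)$ need not contain a $K$-rational point: algebraic closedness of $k$ only gives rational points of $(\cX^\circ)\genf$ as a whole. So passing to a finite extension $K'/K$ is genuinely necessary, and you must check that nothing is lost. Here this is easy:
\begin{itemize}
\item The base change of $\cN^\circ$ to $K'^\circ$ is still formal semi-abelian, hence still the marked Néron model by \cref{p:semi-abelian_subgroup}.
\item The base change of $\cX^\circ$ stays connected, because the residue field does not change.
\item Factoring through the open subspace $(\cN^\circ)\genf$ descends to $K$, because $(\cX^\circ)\genf\times_{\SKKc}\Spa(K',K'^\circ)\to(\cX^\circ)\genf$ is surjective.
\end{itemize}
The paper carries out the corresponding check at the level of $K^+$, using that the residue extension is trivial.
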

\begin{proof}
	Starting from $\cN^\circ$ it follows as in the last paragraph of the proof of \cref{t:uniformization_formal1} that $A$ admits a strong marked Néron model extending $\cN^\circ$ and whose base change to $K^+/K^{\circ\circ}$ is the Néron model of $\cN_k$. Thus the uniqueness of the strong marked Néron model shows that $\cN_{K^+/K^{\circ\circ}}$ is the Néron model of $\cN_k$. Hence the left vertical map is an isomorphism by \cref{p:pushout_equivalence}. Since $k$ is algebraically closed, $\cX^\circ$ admits a $\SKc$-rational point. Thus the lower horizontal map in the left square is an isomorphism. The right vertical map is well defined, as the intersection contains a height one point if it is nonempty. The middle vertical and right vertical maps are injective by \cref{p:valuative_strong_criterion}.

    We claim that if $(K', K'^+)$ is a finite affinoid base field extension, then $\cN_{\Spf(K'^+)}$ is the strong marked Néron model of $A_{\Spa(K', K'^+)}$. Indeed, its fiberwise identity component is formal semi-abelian. Thus it suffices to show that the reduction is the Néron model of $\cN_{K'^\circ/K'^{\circ\circ}}$. As $k$ is algebraically closed, the residue extension of valued fields $(K'^\circ/K'^{\circ\circ})/k$ is trivial. Hence $\cN_{K'^+/K'^{\circ\circ}}=\cN_{K^+/K^{\circ\circ}}$ along this identification, which implies the claim.

    We claim that the lower right map is an isomorphism. It is clearly injective. Let $f$ be in the lower right set. 
    To show that it factors through $(\cN^\circ)\genf$, we may first pass to a finite field extension of $K$ by the previous paragraph (cf.\ \cite[Cor.\ 4.2.5]{Conrad2006RelativeAmpleness}
    for the descent statements). Therefore, we may assume that $f^{-1}(f((\cX^\circ)\genf)\cap(\cN^\circ)\genf)=(\cX^\circ)\genf\times_{f,A^\circ}(\cN^\circ)\genf$ contains a $\SKKc$-rational point. The statement then follows from the strong marked Néron property by considering translations.
\end{proof}

\begin{lemma}\label{l:algebraic_vs_formal_Neron}
    Suppose that $A\to \Spec(K)$ is an abelian variety that admits a semi-abelian model $\cE\to \Spec(K^+)$. There is a canonical homomorphism
    \begin{equation*}
        (\overhat{\cE})\genf\to A^{\mathrm{an}},
    \end{equation*}
    where $\overhat{\cE}$ is the formal completion of $\cE$ along $V(\varpi)$,
    and $A^{\mathrm{an}}$ is the analytification to $\SKKp$. This makes $\overhat{\cE}$ the marked Néron model of $A^{\mathrm{an}}$.
\end{lemma}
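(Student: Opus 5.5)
We first construct the canonical homomorphism. The formal completion $\overhat{\cE}$ of the semi-abelian scheme $\cE\to\Spec(K^+)$ along $V(\varpi)$ is an admissible formal group scheme over $\SKp$. Its reduction over $\Spec(K^+/K^{\circ\circ})$ agrees with $\cE\times_{\Spec(K^+)}\Spec(K^+/K^{\circ\circ})$, which is semi-abelian, so $\overhat{\cE}$ is a formal semi-abelian scheme.

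The generic fiber $(\overhat{\cE})\genf$ comes with the standard comparison morphism $(\overhat{\cE})\genf\to \cE_K^{\mathrm{an}}=A^{\mathrm{an}}$. To obtain it, cover $\cE$ by affines $\Spec(R)$ of finite presentation over $K^+$. The morphism is then $\Spa(\hat R[\varpi^{-1}],\intcl{\hat R})\to (\Spec R_K)^{\mathrm{an}}$, induced by the map $R\to\hat R$ and the universal property of analytification over $\SKKp$. These local maps glue and respect the group laws because the construction is functorial and compatible with fiber products. This defines the canonical homomorphism.

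Next we show that this homomorphism is an open immersion onto an open subgroup. We plan to reduce to the rank one fiber. Taking $(-)^\circ$ gives the classical comparison map $(\overhat{\cE^\circ})\genf\to (A^{\mathrm{an}})^\circ$ for the model $\cE_{K^\circ}$ over $K^\circ$. For a separated finite type $K^\circ$-scheme this map is a quasi-compact open immersion; it is the classical Raynaud/Bosch analytification-of-formal-completion result. By \cref{l:circ_reflects_tft_smooth}(2), applied with separated source, the map over $\SKKp$ is itself an open immersion. Its image $G'$ is an open subgroup of $A^{\mathrm{an}}$ with formal semi-abelian model $\overhat{\cE}$.

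Since $A^{\mathrm{an}}$ is connected, commutative, separated and locally of topologically finite type, \cref{p:semi-abelian_subgroup} then shows that $\overhat{\cE}$ is the marked Néron model of $A^{\mathrm{an}}$.

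The main obstacle is the open-immersion step. In particular, we must justify that the classical comparison map over $K^\circ$ is an open immersion in the non-noetherian rank one setting. For this we would quote Bosch's result for admissible formal completions of separated schemes of finite presentation over $K^\circ$. Alternatively, we would check it directly: on affines it identifies $(\overhat{\cE})\genf$ with the locus $\{|f_i|\le 1\}$ for generators $f_i$ of $R$, which is a rational open subset. Gluing these opens uses the separatedness of $\cE$.
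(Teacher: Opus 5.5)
Your proposal is correct and follows essentially the same route as the paper: you observe that $\overhat{\cE}$ is formal semi-abelian, reduce the open-immersion property via \cref{l:circ_reflects_tft_smooth}(2) to the rank one fiber (where it is Conrad's comparison theorem for separated finite type $K^\circ$-schemes), and conclude with \cref{p:semi-abelian_subgroup}. The only minor difference is how the map is obtained: you build it directly from the universal property of analytification over $\SKKp$, whereas the paper takes the classical map on the $\SKKc$-fibers and extends it uniquely using partial properness of $A^{\mathrm{an}}$ (\cref{p:valuative_strong_criterion}), and that same uniqueness shows the two constructions agree.
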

\begin{proof}
     By \cite[Thm.\ 5.3.1]{Conrad1999IrreducibleComponents} there is a natural homomorphism $((\overhat{\cE})^\circ)\genf\to A^{\mathrm{an},\circ}$, and it is an open immersion. \Cref{p:valuative_strong_criterion,l:circ_reflects_tft_smooth} imply that it uniquely extends to an open immersion $\smash{(\overhat{\cE})}\genf\to A^{\mathrm{an}}$. Since $\smash{\overhat{\cE}}\times_{\SKp}\Spec(K^+/{K^{\circ\circ}})=\cE\times_{\Spec(K^+)}\Spec(K^+/{K^{\circ\circ}})$, it is a formal semi-abelian scheme. The statement follows from \cref{p:semi-abelian_subgroup}.
\end{proof}

We now investigate finiteness of the multiplication by $m$ map $[m]$ on strong marked Néron models and algebraic Néron models.

\begin{proposition}\label{p:Neron_finite_m}
	\begin{enumerate}[leftmargin=2.6em]
		\item Let
        $A\to \SKKp$ be an abeloid variety with strong marked Néron model $\cN\to \SKp$. Suppose that $k$ is algebraically closed, and that the fiberwise identity component of $\cN$ is semi-abelian.
        Then the maps $[m]_{\cN}$ and $[m]_{\cN\genf}$ are finite, flat and surjective for every nonzero $m$.
		\item Suppose that $V$ is a valuation ring with algebraically closed field of fractions $F$. Let
        $E_F\to \Spec(F)$ be a semi-abelian variety with Néron model $N\to \Spec(V)$. Then $[m]_{N}$ is finite, flat and surjective for every nonzero $m$.
	\end{enumerate}
\end{proposition}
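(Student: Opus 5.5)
We prove (1) first and deduce (2) from it; the introduction indicates this route, passing to a microbial valuation ring of strictly higher rank.

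\emph{Part (1).} First we show that $[m]_{\cN\genf}$ is finite, flat and surjective. Flatness and surjectivity are local statements that can be checked on the identity component, and there they follow from the semi-abelian structure. Indeed, on the formal semi-abelian scheme $\cN^0$, multiplication by $m$ is flat and surjective on the reduction: it is fiberwise an isogeny of semi-abelian varieties, so the fibrewise criterion of flatness applies. Finiteness needs to be controlled on all components. Since $k$ is algebraically closed, every component of $\cN$ has a $\SKp$-section, so $\cN=\bigcup_{g} (g+\cN^0)$ with $g$ ranging over $A(K)$. Then $[m]$ maps $g+\cN^0$ to $mg+\cN^0$.

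The key claim is that the preimage of $h+\cN^0$ is a finite disjoint union of translates of $\cN^0$. We prove this in three steps.
\begin{enumerate}
\item By \cref{l:strong_Neron_mapping} (applied to the reduction), the components of $\cN$ correspond to $A(K)/\cN^0(K^+)$.
\item The fibre of $[m]$ over a component is a torsor under $A[m](K)/(A[m](K)\cap \cN^0(K^+))$, which is finite because $A[m]$ is finite.
\item Divisibility of $A(K)$ modulo $\cN^0(K^+)$ gives surjectivity onto components. Here we use $k$ algebraically closed and Hensel lifting on the semi-abelian part, as in \cref{l:divisible_component_group}.
\end{enumerate}
Hence $[m]_\cN$ is a disjoint union of finitely many copies of the finite flat map $[m]_{\cN^0}$ over each target component, so it is finite and flat. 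Taking generic fibres gives the statement for $\cN\genf$, using \cref{l:generic_fiber_properties}.

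The main obstacle is showing that $[m]_{\cN^0}$ is finite, not merely quasi-finite. On the reduction, $[m]$ on a semi-abelian scheme over a valuation ring is quasi-finite, flat and proper-like. I would argue via \cref{p:semi-abelianAreNeron}: the torus extends and $[m]$ is finite on the torus part, and for the quotient we use that it is finite fibrewise with constant degree $m^{2\dim}$, together with \cite[Cor.~I.2.11]{FaltingsChai1990Degeneration}-type arguments. If that fails, I would reduce to the Noetherian case via \cref{l:semi-abelian_approximation}, where $[m]$ on semi-abelian schemes is known to be finite by Faltings–Chai.

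\emph{Part (2).} We reduce to (1). Set $K:=\widehat{F((t))}$, or more precisely the completion of $F(t)$ at a rank one valuation composed with $V$, as in \cref{e:affinoid_field}(3). We choose a microbial valuation ring $K^+$ whose residue ring $K^+/K^{\circ\circ}$ is $V$ and whose residue field is $k=F$, which is algebraically closed.

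By \cref{t:Neron_existence_V}, $E_F$ extends to a semi-abelian scheme and has Néron model $N$. We then build an abeloid variety over $\SKKp$ whose strong marked Néron model has reduction $N$, via \cref{p:pushout_equivalence}. Concretely, we take $A$ to have a good-reduction-type formal model over $K^\circ$ whose special fibre is a compactification-compatible lift of $E_F$. For the torus part, a Tate-type uniformization with $t$-adic periods makes $E_F$ the special fibre of the marked Néron model over $K^\circ$, using \cref{t:classical_uniformization_formal1}.

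By the uniqueness in \cref{l:strong_Neron_mapping}, the reduction of the strong marked Néron model over $K^+/K^{\circ\circ}=V$ is exactly $N$. Finiteness, flatness and surjectivity of $[m]_\cN$ from (1) then pass to the base change $[m]_N$. The delicate point is realising an arbitrary semi-abelian $E_F$ as such a reduction; I expect lifting the extension class via \cref{l:Ext_formal_agrees} with $k=F$ separably closed handles it.
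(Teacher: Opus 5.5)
There is a genuine gap in Part (1), at the step you flagged as the main obstacle: $[m]_{\cN^0}$ is in general \emph{not} finite. So no decomposition of $[m]_\cN$ into copies of it can prove finiteness.

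\textbf{Why the identity component fails.} $\cN^0$ is formal semi-abelian, but its torus rank may jump along $\Spec(K^+/K^{\circ\circ})$. This is exactly the case where $B$ lacks good reduction (\cref{c:uniformization_formal3}). An example is \cref{e:classification_RU_curves}(2): there $\cN^0_k$ is an elliptic curve, while the fibres over special points are $\dG_m$. On the fibre over $s$, $[m]$ has degree $m^{2d-t(s)}$, where $d$ is the relative dimension and $t(s)$ the torus rank. This degree is not locally constant, so the quasi-finite flat map $[m]_{\cN^0}$ cannot be finite. Both of your justifications are therefore false. The degree is not constantly $m^{2\dim}$. Faltings--Chai only give that $E[m]$ is quasi-finite flat, and it is finite if and only if the torus rank is constant.

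\textbf{Further errors in your decomposition.}
\begin{itemize}
\item The translates $g+\cN^0$ are not disjoint. The reduction is glued from copies of $\cN^0$ along domains of definition, and all copies contain the fibre over $\Spec(k)$. Finiteness of $[m]_\cN$ is a global phenomenon: the extra non-identity components in a preimage make up for the drop in degree of $[m]_{\cN^0}$.
\item Your step (1) misidentifies the components. Those of $\cN_s$ are indexed by $\cN(K^+)/\cN^0(K^+)$, not $A(K)/\cN^0(K^+)$, because $\cN\genf\neq A$ in general. In \cref{e:classification_RU_curves}(3) these groups are $\Gamma_{K^+/K^{\circ\circ}}$ and $\Gamma_{K^+}/\lvert q\rvert^{\dZ}$, respectively.
\end{itemize}

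\textbf{The missing idea} is to control $\cN[m]$ rather than $[m]_{\cN^0}$. The paper first assumes $K$ algebraically closed and argues as follows.
\begin{itemize}
\item Flatness and local quasi-finiteness of $[m]$ on the reduction follow by translating components to $\cN^0$, as you do.
\item Surjectivity follows from \cref{l:divisible_component_group} over $K^+/K^{\circ\circ}$, whose fraction field $k$ is algebraically closed.
\item The key point is that $(\cN\genf)[m]=A[m]\cap\cN\genf$ is finite. Each connected component of $A[m]^\circ$ is a rational point, which extends to a section over $\SKKp$ by partial properness. Since $\cN\genf(\SKKc)=\cN\genf(\SKKp)$ by \cref{l:strong_Neron_mapping}, $\cN\genf$ contains every connected component of $A[m]$ that it meets.
\item Hence $\cN[m]$ is proper by \cref{l:generic_fiber_properties}. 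Its reduction is proper and locally quasi-finite, hence finite.
\item Finally $[m]$ is finite by fppf descent, since its base change along itself is $\cN\times\cN[m]\to\cN$.
\end{itemize}
This is why the paper takes $K=F((t))^{\mathrm{alg},\wedge}$ in (2). It then deduces (1) for general $K$ by applying (2) to the reduction, which is the N\'eron model of $\cN_k$.

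\textbf{On Part (2).} Your outline is right, but no lifting of extension classes is needed. Pull $E_F$ back along the section $F\to F[[t]]\to K^\circ$ to get a constant extension of an abelian scheme by a split torus. Push out its completed generic fibre along the full torus and divide by any full lattice. Then \cref{t:uniformization_formal1} produces a strong marked N\'eron model whose reduction is $N$.
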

    
\begin{proof}
	Let us first show (1) under the assumption that $K$ is algebraically closed. Note that $[m]_A$ is finite, flat and surjective. Indeed, it is proper as a map between proper spaces. The properties can therefore be checked on the $\SKKc$-fiber by \cref{l:circ_reflect_pp,l:circ_reflects_tft_smooth}. {In the rank one case} the statements are known.
        
    Let $\cE$ denote the fiberwise identity component of $\cN$. Then $[m]_{\cE_{K^+/K^{\circ\circ}}}$ is quasi-finite, flat, and surjective (cf.\ \cite[Lemmas 7.3.1, 7.3.2]{BoschLuetkebohmertRaynaud1990NeronModels}). Let $s\in \Spec(K^+/K^{\circ\circ})$, and let $C$ be a connected component of $\cN_s$. Since $\kappa(s)$ is algebraically closed there exists some $c\in C(s)$. {Considering the corresponding translation to the identity component, we deduce 
that $[m]_{\cN_{K^+/K^{\circ\circ}}}$ is flat and quasi-finite on $C$. Hence it is flat and}
locally quasi-finite by the fiberwise flatness criterion. Moreover, surjectivity follows from the divisibility of fiberwise component groups, which was shown in 
\cref{l:divisible_component_group}. Flatness of the reduction implies that $[m]_\cN$ is flat, and in particular that $\cN[m]$ is an admissible formal scheme.
    
    Next, we claim that $(\cN\genf)[m]=A[m]\cap \cN\genf$ is finite. Every connected component of $A[m]^\circ$ is topologically a point that admits a section as $K$ is algebraically closed. Partial properness of $A[m]$ implies that these extend to surjective sections over $\SKKp$. Hence $\cN\genf$ fully contains every connected component it intersects since $\cN\genf(\SKKc)=\cN\genf(\SKKp)$ by \cref{l:strong_Neron_mapping}. Thus $(\cN\genf)[m]$ is finite,
    and hence $\cN[m]$ is proper as properness descends from the adic generic fiber by \cref{l:generic_fiber_properties}.

    In particular, local quasi-finiteness of the reduction implies that $\cN_{K^+/K^{\circ\circ}}[m]$ is finite. Therefore $[m]_{\cN_{K^+/K^{\circ\circ}}}$ is finite by descent, which implies that $[m]_{\cN}$ is finite. The statement for $[m]_{\cN\genf}$ follows.

    We claim that this implies (2). Indeed, consider $K:=F((t))^{\mathrm{alg},\wedge}$ with the $t$-adic topology. Its residue field is $F$, and $K$ therefore admits a microbial valuation ring $K^+$ such that $K^+/K^{\circ\circ}=V$.
    
    Let $E_{K^\circ}:=E_F\times_{\Spec(F)}\Spec(K^\circ)$ be the pullback of $E_F$ along the section $F\to F[[t]]\to K^\circ$ of the reduction map $K^\circ\to F$. Then $E_{K^\circ}\to \Spec(K^\circ)$ is globally an extension of an abelian scheme by a split torus. Its special fiber over the residue field $F$ is $E_F$.
    
    The completion $\widehat{E_{K^\circ}}\to \SKc$ is an extension of a formal abelian scheme by a split formal torus. Consider the pushout $E$ of its adic generic fiber along the full algebraic torus. Choose any full lattice $M\subseteq E$. Then the quotient $E/M$ is an abeloid variety over $\SKKc$ (cf.\ \cite[Prop.\ 6.1.4]{Luetkebohmert2016RigidGeometryCurves}). It follows that $\widehat{E_{K^\circ}}$ is the marked Néron model of $E/M$.  Denote by $A\to \SKKp$ the unique extension of $E/M$ to an abeloid variety over $\SKKp$ due to \cref{p:equivalence_proper}. By \cref{t:uniformization_formal1}, $\widehat{E_{K^\circ}}$ extends to a strong marked Néron model $\cN\to \SKp$ of $A$, and its reduction $\cN_V$ is the Néron model of $\cN_F=(\widehat{E_{K^\circ}})_F=E_F$. Thus $\cN_V=N$, and hence (2) follows from (1) applied to $A$.

    This in turn implies (1) for algebraically closed residue fields. Indeed, the properties can be checked on the reduction $\cN_{K^+/K^{\circ\circ}}$.
    But $\cN_{K^+/K^{\circ\circ}}$ is the Néron model of $\cN_k$ as seen in the beginning of the proof of \cref{l:strong_Neron_mapping}. Thus (2) applies to $\cN_{K^+/K^{\circ\circ}}$.
\end{proof}

Observe that the preceding proof of (2) reduces an algebraic statement about a valuation ring to an analytic statement about another valuation ring of strictly higher rank. We then apply our uniformization theory to the latter analytic setup.

Many statements from the classical theory over $\SKKc$ readily carry over to $\SKKp$. For example, let us record how morphisms between abeloid varieties can be described in terms of their Raynaud uniformizations and marked Néron models.

\begin{proposition}\label{p:Raynauduni2}
    Let $A_1, A_2\to \SKKp$ be abeloid varieties and {assume that {$(K,K^+)$} is big enough so that $A_i$ admit marked N\'eron models $\cE_i$ which are extensions of formal semi-abelian schemes $\cB_i$ with formal abelian $\SKc$-fiber by split formal tori $\cT_i$.} Denote the Raynaud uniformizations by $A_i\cong E_i/M_i$, $B_i\cong E_i/T_i$.
    Then
    \begin{enumerate}
        \item $\cE_i$ is the marked Néron model of $E_i$ for $i=1,2$.
        \item There is a canonical isomorphism
        \begin{equation*}
            \Hom(A_1, A_2)\cong \{f\in \Hom(E_1, E_2)\mid f(M_1)\subseteq M_2\}.
        \end{equation*}
        \item Let $\varphi_{E_i}\colon \underline{\chi(T_i)}\to B_i^{\vee}$ be the homomorphisms corresponding to $E_i$ along the isomorphisms $\mathrm{Hom}(\underline{\chi(T_i)}, B_i^\vee) \cong {\mathrm{Ext}}^1_{\mathrm{an}}(B_i, T_i)$. There is a natural isomorphism
        \[
	    \Hom(E_1, E_2) \cong\left\{\hspace{0.5mm}f\in \Hom(\cE_1, \cE_2)\hspace{2mm}\middle|
    \hspace{1.7mm}\vcenter{\hbox{%
	        \parbox{0.20\linewidth}{\centering\noindent
			The following
            diagram commutes:
	        }
			}}
    % https://q.uiver.app/#q=WzAsNCxbMCwwLCJcXHVuZGVybGluZXtcXGNoaShUXzIpfSJdLFswLDEsIkJfMl5cXHZlZSJdLFsxLDAsIlxcdW5kZXJsaW5le1xcY2hpKFRfMSl9Il0sWzEsMSwiQl8xXlxcdmVlIl0sWzAsMSwiXFx2YXJwaGlfe0VfMn0iXSxbMCwyXSxbMSwzXSxbMiwzLCJcXHZhcnBoaV97RV8xfSJdXQ==
\begin{tikzcd}[cramped,row sep=small]
	{\underline{\chi(T_2)}} & {\underline{\chi(T_1)}} \\
	{B_2^\vee} & {B_1^\vee}
	\arrow[from=1-1, to=1-2]
	\arrow["{\varphi_{E_2}}", from=1-1, to=2-1]
	\arrow["{\varphi_{E_1}}", from=1-2, to=2-2]
	\arrow[from=2-1, to=2-2]
\end{tikzcd}
            \right\},
            \]
            where the top horizontal arrow is induced by the restriction of $f$ to the tori, and the bottom arrow is the dual of the morphism $B_1\to B_2$ induced by the morphism $\cB_1\to \cB_2$ on quotients.
    \end{enumerate}
\end{proposition}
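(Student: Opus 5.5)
The overall strategy is to lift each statement from the rank one fiber to $\SKKp$. On the rank one side the analogous statements are classical (\cite[Section 6.4, 7.6]{Luetkebohmert2016RigidGeometryCurves}). For the lifting, we use the full faithfulness of $(-)^\circ$ on partially proper spaces (\cref{p:valuative_strong_criterion}) and the gluing equivalence \cref{p:pushout_equivalence}.

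\emph{Step (1).} By \cref{c:uniformization_formal2}, $\cE_i\genf$ is an open subgroup of $E_i$; it is the pushout piece $\cE_i\genf\subseteq E_i'\subseteq E_i$. The group $E_i$ is connected, commutative, separated and locally of topologically finite type, being an extension of the abeloid variety $B_i$ by a torus. Since $\cE_i$ is formal semi-abelian, \cref{p:semi-abelian_subgroup} applies directly and shows that $\cE_i$ is the marked Néron model of $E_i$. In fact, this is the same argument that was used for $A_i$.

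\emph{Step (2).} Let $h\colon A_1\to A_2$ be a homomorphism. By \cref{t:uniformization_analytic}(1), $h$ lifts uniquely to the uniformization crosses, hence to some $f\colon E_1\to E_2$ with $f(M_1)\subseteq M_2$. Conversely, any such $f$ descends to the quotients $E_i/M_i\cong A_i$, since these are quotients of sheaves on the analytic site. The two constructions are mutually inverse. On one side this is uniqueness of the lift. On the other side it holds because $E_1\to A_1$ is an epimorphism, so a map out of $A_1$ is determined by its composite with $E_1\to A_1$.

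\emph{Step (3).} This is the main step. By (1) and \cref{r:marked_model_analytic}(1), restriction gives an injection
\[
\Hom(E_1,E_2)\hookrightarrow \Hom(\cE_1\genf,E_2)\cong\Hom(\cE_1,\cE_2).
\]
It remains to identify the image.

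Let $f\colon\cE_1\to\cE_2$ be given. It maps $\cT_1$ into $\cT_2$, because there are no nonzero homomorphisms from a formal torus to the semi-abelian quotient on the rank one fiber. By \cref{p:pushout_equivalence} this also holds over $\SKp$. Hence $f$ induces maps $\cT_1\to\cT_2$ and $\cB_1\to\cB_2$. The first yields $\chi(T_2)\to\chi(T_1)$. The second yields $B_1\to B_2$ by the uniqueness part of \cref{l:Ext_formal_agrees}, which holds because $B_i$ extends $\cB_i\genf$ and properness gives full faithfulness.

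Now $f$ extends to $E_1\to E_2$ if and only if there is a morphism of extensions. Equivalently, the pushout of $[E_1]$ along $T_1\to T_2$ must agree with the pullback of $[E_2]$ along $B_1\to B_2$ in $\mathrm{Ext}^1_{\mathrm{an}}(B_1,T_2)$, and the extension must be compatible with $f$ on $\cE_1\genf$. Under the identification $\mathrm{Ext}^1(B,T)\cong\Hom(\chi(T),B^\vee)$ from \cref{l:Ext_formal_agrees} and \cref{eq:dual_ext_picard}, this equality of classes is exactly commutativity of the displayed square.

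The compatibility with $f$ on the open part is where I expect the main obstacle. I would handle it on the rank one fiber. There the statement is \cite[Prop.\ 6.4.1]{Luetkebohmert2016RigidGeometryCurves}, and the resulting map $E_1^\circ\to E_2^\circ$ lifts uniquely to $E_1\to E_2$ by \cref{p:valuative_strong_criterion}, since $E_i$ is partially proper. The lift agrees with $f$ on $\cE_1\genf$ by injectivity of that same proposition. Naturality of the resulting bijection is immediate from the construction.
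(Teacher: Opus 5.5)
Your proposal is correct and follows essentially the paper's route. Part (1) goes through \cref{c:uniformization_formal2} and \cref{p:semi-abelian_subgroup}, part (2) through naturality of the uniformization cross, and part (3) combines (1) with \cref{p:valuative_strong_criterion} to reduce to the classical rank one description of $\Hom(E_1,E_2)$ by commutative squares, which the paper takes from \cite[Prop.\ 6.1.9]{Luetkebohmert2016RigidGeometryCurves} rather than Prop.\ 6.4.1. The compatibility with $f$ that you flag as the main obstacle is in fact immediate: two homomorphisms $\cE_1\to\cE_2$ inducing the same maps on the $\cT_i$ and the $\cB_i$ differ by a homomorphism $\cB_1\to\cT_2$, and this vanishes on the $\SKc$-fiber (formal abelian scheme to formal torus), hence over $\SKp$ by \cref{p:pushout_equivalence}, exactly as in your argument for $\cT_1\to\cB_2$.
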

\begin{proof}
(1): Consider the diagram in \cref{c:uniformization_formal2}(1). Then $(\cE_i)\genf\to E_i'$ is an open immersion as $\cT_i\genf\to T_i$ is an open immersion, and $E_i'\to E_i$ is an open immersion by pullback. Thus (1) follows from \cref{p:semi-abelian_subgroup}.
(2) follows from the naturality of the uniformization cross (\cref{t:uniformization_analytic}(1)) and $A_i\cong E_i/M_i$.
    
(3): Observe that every morphism $(\cB_1)\genf\to(\cB_2)\genf\to B_2$ indeed uniquely extends to a morphism $B_1\to B_2$ by \cref{p:valuative_strong_criterion}. Moreover, note that every homomorphism $(\cE_1)\genf\to E_1\to E_2$ induces
a unique homomorphism $\cE_1\to \cE_2$ by (1). Thus the claim follows from \cite[Prop.\ 6.1.9]{Luetkebohmert2016RigidGeometryCurves}. That statement shows that in the classical case $K^+=K^\circ$, $\Hom(E_1,E_2)$ is given by the set of commutative squares on the right, where the horizontal maps may be arbitrary homomorphisms. 
\end{proof}

\bigskip

To illustrate our results, let us consider abeloid varieties of relative dimension $1$, i.e., those that are analytifications of elliptic curves. Recall that one-dimensional abeloid varieties over $\SKKc$ are projective (cf.\ \cite[Thm.\ 1.8.1]{Luetkebohmert2016RigidGeometryCurves}). By \cite[Cor.\ 1.8.7]{Huber1996EtaleCohomology}, $(-)^\circ$ reflects and preserves the property of having pure relative dimension $d$ for morphisms over $\SKKp$ that are locally of topologically finite type. Thus abeloid varieties over $\SKKp$ of relative dimension $1$ indeed correspond to elliptic curves over $\Spec(K)$.
The following example is inspired by \cite[Example 2.20]{Heuermodular}.

\begin{example}\label{e:classification_RU_curves}
Let $A_{\mathrm{alg}}$ be an elliptic curve over an algebraically closed non-Archimedean field $K$ with analytification $A:=A_{\mathrm{alg}}^{\mathrm{an}}$ over $\SKKp$.
 By \cref{t:Neron_existence_V}, $A_\mathrm{alg}$ admits a Néron model $N\to \Spec(K^+)$  and its identity component $\cE_{\mathrm{alg}}\to \Spec(K^+)$ is semi-abelian. The completion $\cE:=\smash{\overhat{\cE_{\mathrm{alg}}}}$ is naturally the marked Néron model of $A$ by \cref{l:algebraic_vs_formal_Neron}.
    \begin{enumerate}[label=(\arabic*)]
        \item If $j(A_{\mathrm{alg}})\in K^+$,  then $A$ has good reduction over $\SKKp$. The torus and lattice in the Raynaud uniformization are $0$.
        \item If $j(A_{\mathrm{alg}})\in K^\circ\setminus K^+$, then $A$ has generically good reduction but does not have good reduction. Hence the torus and lattice in the Raynaud uniformization are again $0$. The $\SKc$-fiber of $\cE$ is proper, but its fibers eventually become toric in the direction of the valuation ring.
        \item If $j(A_{\mathrm{alg}})\notin K^\circ$, then $A$ has totally degenerate reduction, i.e., $\cE\cong \dG_{m,\SKp}$ and $B=0$. The open immersion $\cE\genf\to A$ uniquely extends to a homomorphism $\dG_{m,\SKKp}\to A$. It yields the Raynaud uniformization $A\cong \dG_{m,\SKKp}/\langle \hspace{0.15mm}q \hspace{0.15mm}\rangle$ for some $0\neq q\in K^{\circ\circ}$.
    \end{enumerate}
    For (2) and (3), in contrast to the classical case $K^+=K^\circ$, the subgroup defined by $\cE\genf\to A$ is generally not the largest connected smooth-formal open neighborhood of $e_A$. Instead, this role is taken by the adic generic fiber of the strong marked Néron model $\cN$ of $A$, which exists by \cref{t:uniformization_formal1}. Its fiberwise identity component is $\cE$, and it is glued from copies of $\cE$. Note that $\cN$ is generally not the completion of $N$.

   Moreover,
   in case (3), for $s\in \SKp$ the component groups are
    \begin{equation*}
        \pi_0(N_s)\cong \Gamma_{K^+_s}/\lvert q\rvert_{K^+_s}^{\dZ},\quad\pi_0(\cN_s)\cong \Gamma_{K^+_s/K^{\circ\circ}}.
    \end{equation*}
\end{example}
\begin{proof}
    First, note that if $V$ is a valuation ring with algebraically closed fraction field $F$, then an elliptic curve $E \to \Spec(F)$ extends to an elliptic curve over $\Spec(V)$ if and only if $j(E)\in V$. This follows from, e.g., the proofs of \cite[Prop.\ VII.5.5, Cor.\ A.1.4b]{Silverman2009EllipticCurves}, which do not need any assumption on the valuation ring. In particular, if $p\in \Spec(K^+/K^{\circ\circ})$,
    then $(\cE_{\mathrm{alg}})_p=\cE_p$ is an abelian variety if and only if $j(A_{\mathrm{alg}})\in K^+_p$. Otherwise it is a torus.

    Hence in case (1) $A$ has good reduction over $\SKKp$.
    The torus rank in the Raynaud uniformization is the torus rank of $\cE_{k}$, which shows (2). In case (3) the fiberwise torus rank of $\cE$ is $1$. Hence $\cE\cong\dG_{m,\SKp}$, and the claim follows from \cref{c:uniformization_formal2}.

    In cases (2) and (3) it follows from the strong marked Néron mapping property that $\cN\genf$  is the largest connected smooth-formal neighborhood of $e_A$. 
    For $K^+=K^\circ$, the two notions of marked Néron models agree.

For the component group calculation in case (3), we may base change $K^+$ to $K^+_{s}$. We therefore abuse notation and assume that $s$ is the closed point of $\SKp$. Note that since $k$ is algebraically closed, every connected component of $N_s$ (of $\cN_s$, respectively) is hit by a section of $\Spec(K^+)$ (of $\Spec(K^+/K^{\circ\circ})$, respectively). Thus 
    \begin{equation*}
        \pi_0(N_s)\cong N(K^+)/\cE_{\mathrm{alg}}(K^+),\quad \pi_0(\cN_s)=\cN(K^+/K^{\circ\circ})/\cE(K^+/K^{\circ\circ}).
    \end{equation*}

    Let us first consider $N_s$. By \cref{c:uniformization_formal2} the following diagram commutes, and the horizontal arrows are isomorphisms
    % https://q.uiver.app/#q=WzAsOCxbMCwxLCJOKEteKykiXSxbMCwwLCJcXGNFX3tcXG1hdGhybXthbGd9fShLXispIl0sWzEsMSwiQV9cXG1hdGhybXthbGd9KEspIl0sWzEsMCwiXFxjRShLXispIl0sWzIsMCwiXFxjRVxcZ2VuZihLLCBLXispIl0sWzIsMSwiQShLLCBLXispIl0sWzMsMSwiS15cXHRpbWVzL3FeXFxkWiJdLFszLDAsIktee1xcY2lyYyxcXHRpbWVzfSJdLFswLDJdLFsyLDVdLFs1LDZdLFsxLDNdLFszLDRdLFs0LDddLFsxLDBdLFs3LDZdLFs0LDVdXQ==
\[\begin{tikzcd}[row sep=small]
	{\cE_{\mathrm{alg}}(K^+)} & {\cE(K^+)} & {\cE\genf(K, K^+)} & {K^{+,\times}} \\
	{N(K^+)} & {A_\mathrm{alg}(K)} & {A(K, K^+)} & {K^\times/q^\dZ.}
	\arrow[from=1-1, to=1-2]
	\arrow[from=1-1, to=2-1]
	\arrow[from=1-2, to=1-3]
	\arrow[from=1-3, to=1-4]
	\arrow[from=1-3, to=2-3]
	\arrow[from=1-4, to=2-4]
	\arrow[from=2-1, to=2-2]
	\arrow[from=2-2, to=2-3]
	\arrow[from=2-3, to=2-4]
\end{tikzcd}\]
    Therefore, $\pi_0(N_s)\cong (K^\times/q^\dZ)/ K^{+,\times}\cong \Gamma_{K^+}/\lvert q\rvert_{K^+}^{\dZ}$ along the valuation map.

    For $\cN_s$ we have $\cN(K^+/K^{\circ\circ})=\cN(k)=k^\times$, and $\cE(K^+/K^{\circ\circ})=(K^+/{K^{\circ\circ}})^\times$. Hence the valuation map induces an isomorphism $\pi_0(\cN_s)\cong \Gamma_{K^+/K^{\circ\circ}}$.
\end{proof}

\vspace{5ex}
	\noindent	
	Simon Kaib \hfill Annette Werner \\
 Institut f\"ur Mathematik  \hfill Institut f\"ur Mathematik \\
	Goethe-Universit\"at Frankfurt \hfill Goethe-Universit\"at Frankfurt\\
	Robert-Mayer-Str. 6-8 \hfill Robert-Mayer-Str. 6-8\\
	60325 Frankfurt am Main \hfill 60325 Frankfurt am Main\\	
	kaib@math.uni-frankfurt.de \hfill werner@math.uni-frankfurt.de
	
\end{document}